\documentclass[a4paper,fleqn]{cas-sc}
\usepackage[numbers,sort&compress]{natbib}
\usepackage{amssymb,amsxtra,amsmath,amsthm}
\usepackage{latexsym}
\usepackage{ifthen}
\usepackage{graphicx}
\usepackage{calc}
\usepackage{indentfirst}
\usepackage{algorithm,algorithmicx}
\usepackage{algpseudocode}
\usepackage{color}
\usepackage{bigints}
\usepackage{booktabs}
\usepackage{Article}

\newcommand{\R}{\mathbb{R}}

\newcommand{\lstsize}{\fontsize{7}{8.5}\selectfont}

\newtheorem{theorem}{Theorem}[section]
\newtheorem{lemma}[theorem]{Lemma}
\newtheorem{proposition}[theorem]{Proposition}

\theoremstyle{definition}

\theoremstyle{remark}

\usepackage[english]{babel}
\usepackage{listings}
\usepackage{pgfplots}
\usepackage{placeins}
\usepackage{url}

\usepgfplotslibrary{groupplots}
\pgfplotsset{compat=1.18}
\pgfplotsset{
  paper/.style={
    grid=major, grid style={gray!30},
    tick label style={font=\normalsize\boldmath},
    label style={font=\large\bfseries\boldmath},
    title style={font=\large\bfseries},
    legend style={font=\normalsize\bfseries\boldmath, fill=white, fill opacity=.9, text opacity=1, draw=gray!50},
    legend cell align=left,
    every axis plot/.append style={line width=1pt},
  },
  paperlog/.style={paper, xmode=log, ymode=log, log basis y=10, max space between ticks=21pt},
}

\usepackage{casfix}  
\makeatletter\@secpenalty=0\makeatother  

\begin{document}
\let\WriteBookmarks\relax

\shorttitle{KernelPack-JAX}
\shortauthors{Cattaneo, Ballard, Kirby, Wright, and Shankar}

\title[mode = title]{KernelPack-JAX: a meshless differentiable GPU framework for node generation and forward and inverse PDE problems}

\author[1]{Alberto Cattaneo}
\affiliation[1]{organization={Kahlert School of Computing, University of Utah},
  city={Salt Lake City}, state={UT}, postcode={84112}, country={USA}}

\author[2]{M. Keith Ballard}
\affiliation[2]{organization={Materials and Manufacturing Directorate, Composite Performance, Air Force Research Laboratory},
  city={Wright-Patterson AFB}, state={OH}, postcode={45324}, country={USA}}

\author[1,3]{Robert M. Kirby}
\affiliation[3]{organization={Scientific Computing and Imaging Institute, University of Utah},
  city={Salt Lake City}, state={UT}, postcode={84112}, country={USA}}

\author[4]{Grady B. Wright}
\affiliation[4]{organization={Department of Mathematics, Boise State University},
  city={Boise}, state={ID}, country={USA}}

\author[1]{Varun Shankar}
\cormark[1]
\ead{shankar@cs.utah.edu}
\cortext[1]{Corresponding author}

\begin{abstract}
We present KernelPack-JAX, a meshless, differentiable, GPU framework in JAX for solving forward and inverse PDE problems in domains whose boundaries, including internal obstacles, are given only by samples of their smooth pieces. Our node generator is based on a new stacked representation of piecewise-smooth boundaries, in which a parametric partition of unity of each smooth piece induces the supports of a curl-free implicit partition of unity of the whole boundary. This representation keeps the edges and corners of the boundary and classifies every point of space without a global fit. Boundary and interior nodes are then selected by a parallel greedy independent-set algorithm that reproduces sequential Poisson-disk sampling exactly. We discretize elliptic and parabolic problems with radial basis function-generated finite differences (RBF-FD) on stencils selected by a new visibility heuristic to avoid crossing the boundary, with ghost nodes for the boundary conditions. In addition, our framework exposes a GPU sparse direct solver to JAX as a differentiable linear solve, so that Jacobians of time-dependent problems cost a small multiple of a forward solve. We derive error estimates that separate consistency from stability and bound the parameter error by the forward error. We demonstrate high-order convergence for Darcy flow, reaction--diffusion, and advection--diffusion in 2D and 3D domains, including one with an internal obstacle, and show that the recovered parameters largely follow the forward errors. We also present timings for every stage.
\end{abstract}

\begin{keywords}
radial basis functions \sep RBF-FD \sep node generation \sep partition of unity \sep differentiable programming \sep inverse problems \sep GPU
\MSC[2020] 65D12 \sep 65M70 \sep 65M32 \sep 65N35 \sep 65N21 \sep 65Y05
\end{keywords}

\hfuzz=130pt\maketitle\hfuzz=0.1pt

\section{Introduction}
\label{sec:intro}
Many applications require the solution of partial differential equations (PDEs) in domains whose geometry is known only through data, such as points sampled from medical images~\cite{antiga2008image,taylor2009patient} or from images of porous rock~\cite{blunt2013pore}, scanned surfaces~\cite{berger2017survey,kazhdan2013screened}, or the output of another computation. Many of these PDEs also contain parameters that must be recovered from measurements of the solution~\cite{stuart2010inverse}, such as the permeability of an aquifer or a reservoir~\cite{carrera1986estimation,yeh1986review,oliver2011history}, the rates of reactions in models of blood clotting~\cite{kuharsky2001surface,fogelson2015fluid}, or a velocity field. We consider such forward and inverse problems in two and three dimensions, in domains whose piecewise-smooth boundaries have edges and corners and may include internal obstacles. Our framework is designed to satisfy four criteria. First, the input should be only samples of the smooth pieces of the boundary, from which we generate node sets that respect a prescribed spacing $h$ and keep the edges and corners. Second, the discretization should be of high order and stable near the boundary, even in nonconvex domains. Third, derivatives of the discrete solution with respect to the parameters should cost little more than the solution itself. Fourth, every stage should run in parallel on the GPU.

Meshless methods suit such problems, since they need only nodes in the domain and on its boundary. Radial basis function-generated finite differences (RBF-FD) approximate differential operators on scattered nodes by local interpolation on stencils~\cite{tolstykh2003,shu2003local,cecil2004numerical,wright2006}, and have matured into a high-order method for PDEs in complex domains~\cite{fasshauer2007meshfree,flyer2015rbffd,fornberg2015solving,fornbergflyer2015primer}. Flyer, Fornberg, Bayona, and Barnett showed that polyharmonic splines~\cite{duchon1977splines} augmented with polynomials converge at the order set by the polynomial degree, with no shape parameter to choose~\cite{flyer2016,bayona2017}. Bayona~\cite{bayona2019insight} explained the roles of the two parts of the basis, and Bayona, Flyer, and Fornberg~\cite{bayona2019polynomials} analyzed the accuracy near domain boundaries. RBF-FD has been applied to nonlinear transport on the sphere~\cite{flyer2012guide} and to the Navier--Stokes equations~\cite{flyer2016enhancing,shahane2021highorder}, has been run on multiple GPUs~\cite{bollig2012}, and its parameters have been studied systematically~\cite{jancic2021monomial,leborne2023guidelines}. Our own work has contributed RBF-FD methods for PDEs on surfaces~\cite{shankar2015rbffd,SHANKAR2017211,shankar2018rbfloi}, overlapped stencils~\cite{shankar2017overlapped}, hyperviscosity stabilization of advection-dominated problems~\cite{shankar2018hyperviscosity,Varun2020RBFFD}, building on~\cite{fornberg2011stabilization}, and methods for domains with moving boundaries~\cite{shankar2021}. In a complex domain, RBF-FD replaces the generation of a mesh~\cite{persson2004simple,geuzaine2009gmsh,si2015tetgen} by two other tasks, the generation of nodes and the closure of the discretization at the boundary. When the PDE contains unknown parameters, a third task is the computation of derivatives of the discrete solution with respect to them.

Node generation for meshless methods has received much attention in the past decade, and Suchde, Jacquemin, and Davydov~\cite{suchde2023point} give a recent survey. Advancing-front methods, which L\"ohner and O\~nate~\cite{lohner1998advancing,lohner2004general} introduced for filling space with points, were made fast and of variable density by Fornberg and collaborators~\cite{fornberg2015fast,vandersande2021}, with related algorithms in~\cite{zamolo2018two,vlasiuk2018fast}. Slak and Kosec~\cite{slak2019} developed a dimension-independent generator of Poisson-disk type, which underlies the Medusa library~\cite{slak2021medusa} and has been parallelized~\cite{depolli2022parallel} and extended to parametric surfaces~\cite{duh2021fast} and to multi-patch NURBS models (including CAD models)~\cite{duh2024discretization}; Jacquemin, Suchde, and Bordas~\cite{jacquemin2023smart} generate and adapt point clouds directly from CAD descriptions. All of these methods take the boundary from an exact or CAD description. In contrast, our earlier node generator~\cite{shankar2018}, on which the present work builds directly, starts from a small number of seed nodes on the boundary. It fits each smooth closed boundary and each embedded object with a parameter-free, high-order parametric model built from spherical RBF interpolants, which supplies the boundary nodes, the normals, and an inside test, and it fills the domain by Poisson-disk sampling in a single pass. The generator has $O(N)$ complexity in two and three dimensions and modifies the node set locally when embedded objects move, which our high-order RBF-FD method for advection--diffusion on time-varying domains relies on~\cite{shankar2021}.

A boundary given only by samples, however, may have edges and corners, which no single smooth interpolant represents~\cite{fleishman2005robust}, and classifying points then requires an implicit model of the whole boundary. Radial basis function implicits~\cite{carr2001reconstruction,turk2002modelling}, including Hermite interpolants of points and normals~\cite{macedo2011hermite}, partition-of-unity implicit surfaces~\cite{ohtake2003}, and in particular the curl-free partition of unity (CFPU) of Drake, Fuselier, and Wright~\cite{drake2022}, reconstruct accurate implicit models from oriented points. Our approach is to combine a parametric partition of unity on each smooth piece with a CFPU representation of the whole boundary that shares its supports. This extends the generator of~\cite{shankar2018} to piecewise-smooth boundaries, including internal obstacles, given by samples of their smooth pieces. Our node generator then samples on the GPU through greedy independent sets, computed by the parallel greedy algorithm of Blelloch, Fineman, and Shun~\cite{blelloch2012,fischer2018}, and thus belongs to the literature on parallel Poisson-disk sampling~\cite{wei2008,gamito2009accurate,bowers2010parallel,ebeida2011efficient,ebeida2012simple}.

The second task is the closure at the boundary, where one-sided stencils reduce the accuracy of RBF-FD and can destabilize it~\cite{bayona2019polynomials}. Ghost nodes outside the domain allow both the PDE and the boundary condition to be enforced at the boundary nodes~\cite{fedoseyev2002improved,fornberg2002observations,fornbergflyer2015primer,shankar2018,shankar2021}. In nonconvex domains, however, the Euclidean nearest neighbors of a node next to a thin wall can lie on the other side of it. We therefore select our stencils by visibility against the implicit model of the boundary, in the spirit of earlier stencil selection for meshless methods~\cite{belytschko1996meshless,organ1996continuous,davydov2011adaptive,davydov2020octant,davydov2023improved}. The closure also determines stability. Although the error and stability of nodal meshless methods have been analyzed extensively~\cite{schaback2017,davydov2016,davydov2019,tominec2021}, the stability of collocation on scattered nodes remains open, so we measure it empirically.

The third task, the computation of derivatives with respect to parameters, arises in inverse problems. Global or local RBF collocation and the method of fundamental solutions have been used for coefficient identification~\cite{dehghan2007rbf}, inverse heat conduction and source problems~\cite{hon2004fundamental,jin2007mfs}, Cauchy problems~\cite{chan2013local}, and PDE-constrained optimal control~\cite{pearson2013rbf}. However, global collocation~\cite{kansa1990multiquadrics} and the method of fundamental solutions lead to dense and often ill-conditioned systems, which limits them to small problems. Physics-informed neural networks~\cite{raissi2019pinn,karniadakis2021piml} and their Kolmogorov--Arnold variants~\cite{shukla2024fair} treat forward and inverse problems alike by training a network on the residual of the PDE and the data, and they do not require a mesh or a node set. In this paper, however, we seek parameters whose error decreases at a prescribed order under refinement and can be bounded by the forward error. We therefore follow the classical approach of PDE-constrained parameter estimation, which differentiates a discretization and solves the resulting nonlinear least-squares problem by Gauss--Newton or Levenberg--Marquardt iterations~\cite{tarantola2005inverse,gunzburger2002perspectives,haber2000optimization,vogel2002computational,kaltenbacher2008iterative,hinze2009optimization}. 

Derivatives with respect to the parameters can come from adjoints~\cite{plessix2006review}, which dolfin-adjoint derives automatically for finite elements~\cite{farrell2013automated,mitusch2019}, or from automatic differentiation. JAX~\cite{jax2018github} has led to differentiable solvers for finite differences and finite volumes~\cite{kochkov2021,holl2020,bezgin2023jaxfluids}, finite elements~\cite{xue2023jaxfem,cattaneo2026jetsci}, and RBF collocation~\cite{nzoyem2023}, and to implicit differentiation~\cite{blondel2022implicit} and differentiable linear solvers~\cite{lineax2023,liu2026jaxamg}. For high-order RBF-FD in complex three-dimensional domains, the cost of the derivatives lies in sparse linear solves and long time loops. Reverse-mode differentiation stores every step of a time loop unless it is checkpointed~\cite{griewank2000revolve,griewank2008evaluating,wang2009minimal}, and automatic differentiation does not pass through a sparse direct factorization. Our framework therefore exposes the factorization of a GPU sparse direct solver to JAX with its own derivative rules~\cite{giles2008collected}, so that derivatives reuse the factors of the forward solve.

To our knowledge, no existing framework goes from samples of the smooth boundary pieces to node sets, high-order meshless operators, sparse solves, and parameter recovery on the GPU, with derivatives through the operators, the solves, and the time loops (Table~\ref{tab:capabilities}). We present KernelPack-JAX, a JAX framework that does this, with the following contributions.

\begin{table}[tbp]
\centering
\caption{Capabilities of related software, as reported in the cited papers and documentation. A bullet (\textbullet) means yes, a circle ($\circ$) partly, and a dash (--) no or not documented. The column Geometry refers to domains given by samples or CAD descriptions of piecewise-smooth boundaries in two and three dimensions, which the finite element and neural network frameworks handle through meshes or constructive solid geometry. The column Inverse marks frameworks that support parameter recovery or optimal control.}
\label{tab:capabilities}
\footnotesize
\setlength{\tabcolsep}{3pt}
\begin{tabular}{@{}lcccccc@{}}
\toprule
 & Geometry & Nodes & \begin{tabular}[b]{@{}c@{}}High-order\\meshless\end{tabular} & GPU & \begin{tabular}[b]{@{}c@{}}Differentiable\\solve\end{tabular} & Inverse\\
\midrule
Medusa~\cite{slak2021medusa} & \textbullet & \textbullet & \textbullet & -- & -- & --\\
RBF~\cite{hines_rbf} & $\circ$ & \textbullet & \textbullet & -- & -- & --\\
RadialBasisFunctions.jl~\cite{beggs2026rbfjl} & $\circ$ & \textbullet & \textbullet & $\circ$ & $\circ$ & --\\
Compadre~\cite{kuberry2019compadre} & -- & -- & \textbullet & \textbullet & -- & --\\
OpenFPM~\cite{incardona2019openfpm} & -- & -- & \textbullet & \textbullet & -- & --\\
Updes~\cite{nzoyem2023} & $\circ$ & -- & -- & \textbullet & \textbullet & \textbullet\\
DeepXDE~\cite{lu2021deepxde} & $\circ$ & -- & -- & \textbullet & $\circ$ & \textbullet\\
PhysicsNeMo~\cite{physicsnemo2023} & \textbullet & -- & -- & \textbullet & $\circ$ & \textbullet\\
JAX-FEM~\cite{xue2023jaxfem} & $\circ$ & -- & -- & \textbullet & \textbullet & \textbullet\\
fea-in-jax~\cite{ballard2025feainjax} & $\circ$ & -- & -- & \textbullet & \textbullet & \textbullet\\
JAX-Fluids~\cite{bezgin2023jaxfluids} & $\circ$ & -- & -- & \textbullet & \textbullet & $\circ$\\
Firedrake, pyadjoint~\cite{ham2023firedrake,mitusch2019} & $\circ$ & -- & -- & $\circ$ & \textbullet & \textbullet\\
$\Phi_{\mathrm{Flow}}$~\cite{holl2024phiflow} & $\circ$ & -- & -- & \textbullet & \textbullet & \textbullet\\
KernelPack-JAX & \textbullet & \textbullet & \textbullet & \textbullet & \textbullet & \textbullet\\
\bottomrule
\end{tabular}
\end{table}
\begin{itemize}
 \item A stacked boundary representation, in which a parametric partition of unity of local polyharmonic interpolants represents each smooth piece (the parametric layer) and a curl-free partition of unity represents the whole boundary implicitly (the implicit layer). Each parametric support induces exactly one implicit ball, so the two layers share one hierarchy of supports and are refined together. Internal obstacles are represented in the same way as the outer wall (Section~\ref{sec:node_gen}), and an extension of the implicit layer into the volume classifies every point of space without a global fit (Section~\ref{sec:node_gen_farfield}).
 \item Parallel node generation, which treats boundary thinning and the dart throwing of interior nodes as greedy independent sets of conflict graphs, computed by the parallel greedy algorithm of~\cite{blelloch2012} extended to sets that must contain given nodes; the result equals the sequential one (Section~\ref{sec:node_gen_gpu}).
 \item Stencil selection for nonconvex domains by the nearest nodes in a distance measured inside the domain, with visibility~\cite{organ1996continuous} decided against the implicit layer. This selection discourages coupling across walls and is Euclidean wherever the nearest nodes are visible (Section~\ref{sec:forward_stencils}).
 \item Differentiable sparse direct solves on the GPU through cuDSS~\cite{cudss}, in forward or reverse mode. Time-dependent problems use the third-order semi-implicit backward differentiation formula (SBDF3)~\cite{ascher1995}, with hyperviscosity for advection~\cite{fornberg2011stabilization,shankar2018hyperviscosity} (Section~\ref{sec:forward}).
 \item Parameter recovery through the discretization, with Jacobians that cost a small multiple of a forward solve and memory independent of the number of time steps (Section~\ref{sec:inverse}).
 \item Error estimates that specialize the stability--consistency framework of~\cite{schaback2017} to the ghost-node closure with Neumann and Dirichlet rows, the standard IMEX argument to SBDF3, and least-squares perturbation theory~\cite{wedin1973,bjorck1996} to the recovered parameters, which bound the parameter error by the forward error divided by the smallest singular value of the Jacobian (Section~\ref{sec:analysis}).
\end{itemize}
We demonstrate our framework in a curved channel in two dimensions (the H-channel) and in a box with a superelliptic cap in three dimensions (the treasure chest), for Darcy flow, reaction--diffusion, advection--diffusion, and the corresponding inverse problems, with RBF-FD of orders $2$, $4$, and $6$. In a final example, we recover reaction laws from solutions carried by a flow around an internal obstacle, a smaller, rotated copy of the box. The forward solutions converge at high order, and the recovered parameters largely follow the forward errors within the bound of our estimates, except for transport with $p=2$ in the treasure chest.

Section~\ref{sec:node_gen} presents the stacked boundary representation and our node generator, Section~\ref{sec:forward} the forward discretization, our stencil selection, and the differentiable solves, and Section~\ref{sec:inverse} the recovery of parameters. Section~\ref{sec:analysis} derives error estimates, Section~\ref{sec:results} presents the numerical results, and Section~\ref{sec:conclusions} concludes. The appendices contain the node generator parameters, code listings, proofs, and experimental details.

\section{Node generation}
\label{sec:node_gen}
\suppressfloats[t]

We now present our node generator. Let $\Omega\subset\R^d$, $d=2,3$, be a bounded domain with a piecewise-smooth boundary $\partial\Omega$, and let $h>0$ be a target spacing. Our generator produces boundary nodes $\chib$ with outward unit normals, interior nodes $\chii$, and ghost nodes $\chig$ outside $\Omega$, which together form a cloud. Algorithm~\ref{alg:node_gen} lists the steps and marks those that are new in this paper.

Our generator extends that of~\cite{shankar2018}, in which one global interpolant maps parameters on the circle or the sphere to a smooth boundary of genus zero, the boundary is supersampled and thinned by sample elimination~\cite{yuksel2015sample}, and the domain is filled by Poisson-disk sampling with an inside test against the nearest boundary normal. However, a single global interpolant cannot represent edges and corners, and both sampling stages visit the samples one at a time. We therefore split the boundary into smooth pieces, each represented by a partition of unity of local interpolants, and build on the same supports an implicit layer that replaces the nearest-normal test. Internal obstacles enter as further pieces (Section~\ref{sec:node_gen_description}), the thinning keeps the features by offering them first, and both sampling stages become greedy independent sets computed on the GPU.

\begin{algorithm}[t]
\caption{Node generation from samples of a piecewise-smooth boundary.}
\label{alg:node_gen}
\begin{algorithmic}[1]
\Require for every smooth piece $\Gamma_\ell$ of every component of $\partial\Omega$ (outer wall and internal obstacles): seed nodes, with parameter values or with normals, and the trimming loops of its parameter region (or the exact map of a planar piece); target spacing $h$
\Ensure boundary nodes $\chib$ with normals, ghost nodes $\chig$, interior nodes $\chii$, implicit layer $\phi$
\State\label{alg:ng_param} assign parameter values to the seed nodes of every piece that has none (Section~\ref{sec:node_gen_description}, Appendix~\ref{sec:app_charts})
\State\label{alg:ng_fit} fit the parametric layer $X_\ell$: solve \eqref{eq:pu_local_system} on every support $S_j$, refine the cells until all validation nodes pass (Section~\ref{sec:node_gen_boundary_math}, Appendix~\ref{sec:app_refine})
\State derive the feature sites, their normals, and their levels from the edges where the pieces meet (Section~\ref{sec:node_gen_sampling})
\State\label{alg:ng_thin} sample the pieces at spacing $h$ and thin the feature sites, then the samples, to $\chib$ by \eqref{eq:boundary_thinning} as a parallel greedy independent set (Section~\ref{sec:node_gen_mis}) \Comment{new}
\State assign the normals and place the ghost nodes $\chig$ by \eqref{eq:ghost_nodes}
\State\label{alg:ng_cover} form one ball $B_j$ per support $S_j$; split the supports of balls that hold too many samples and repeat from step~\ref{alg:ng_fit} on those pieces (Section~\ref{sec:node_gen_levelset_math}, Appendix~\ref{sec:app_cover}) \Comment{new}
\State\label{alg:ng_cfpu} fit the curl-free local functions \eqref{eq:cfpu_system} on all balls in groups of equal size and blend them to $f_{\mathrm{loc}}$ (Sections~\ref{sec:node_gen_levelset_math} and~\ref{sec:node_gen_grouping}) \Comment{new}
\State\label{alg:ng_band} label the free cells of the background grid by min-label propagation, sign the components, and extend $\phi$ by \eqref{eq:implicit_blend} (Sections~\ref{sec:node_gen_farfield} and~\ref{sec:node_gen_labels}) \Comment{new}
\Repeat
 \State\label{alg:ng_interior} draw candidates in $\Omega_\epsilon$ and accept a greedy independent set under \eqref{eq:interior_spacing} with $\chib\cup\chig\cup\chii$ fixed (Sections~\ref{sec:node_gen_interior_math} and~\ref{sec:node_gen_mis}) \Comment{new}
\Until{a pass adds no node, or after four passes}
\end{algorithmic}
\end{algorithm}

\paragraph{A stacked representation of the boundary}
A node generator must know where the boundary and its normals lie, and whether a point lies in the domain. A parametric description answers the second question only by inverting the maps or through the nearest boundary normal, which is ambiguous at edges and corners. We therefore describe $\partial\Omega$ by three layers, each built from the one below.
\begin{enumerate}
 \item The \emph{data layer} consists of the seed nodes of each smooth piece of $\partial\Omega$, with their parameter values, and the \emph{trimming loops} of each piece, the closed curves in parameter space that bound its parameter region (Section~\ref{sec:node_gen_description}).
 \item The \emph{parametric layer} consists of one map $X_\ell$ per smooth piece, a partition of unity of local interpolants of the data over rectangular supports $S_j$ in parameter space (Section~\ref{sec:node_gen_boundary_math}). From its positions and normals we sample the boundary nodes $\chib$ and the ghost nodes $\chig$ (Section~\ref{sec:node_gen_sampling}).
 \item The \emph{implicit layer} consists of a single function $\phi:\R^d\to\R$, a partition of unity of local curl-free fits over balls $B_j$ in $\R^d$ (Sections~\ref{sec:node_gen_levelset_math} and~\ref{sec:node_gen_farfield}). Its zero set approximates $\partial\Omega$, its sign places the interior nodes $\chii$ (Section~\ref{sec:node_gen_interior_math}), and rays traced against it keep the RBF-FD stencils inside the domain (Section~\ref{sec:forward_stencils}).
\end{enumerate}
Figure~\ref{fig:node_stack} shows the three layers for the treasure chest of Section~\ref{sec:nodegen_results}.

\begin{figure}[tbp]
\centering
\begin{tabular}{@{}c@{\hspace{2mm}}c@{}}
\footnotesize (a) Cap parameters & \footnotesize (b) Parametric layer\\
\includegraphics[scale=0.72]{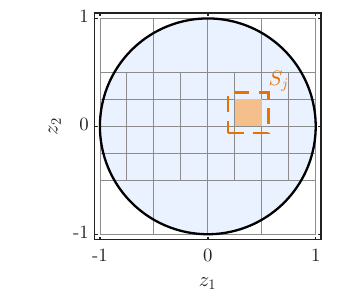} & \includegraphics[scale=0.72]{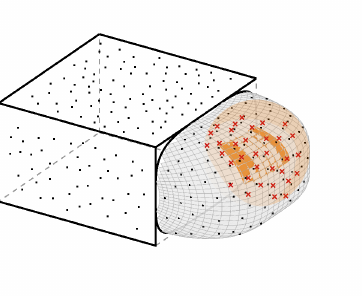}\\[1mm]
\footnotesize (c) Implicit layer, $x_3=0$ & \footnotesize (d) Nodes, $|x_3|<h/2$\\
\includegraphics[scale=0.72]{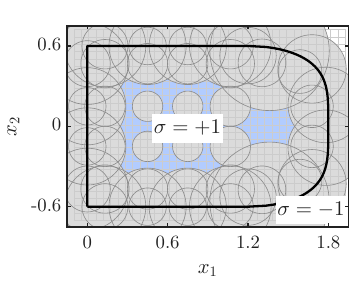} & \includegraphics[scale=0.72]{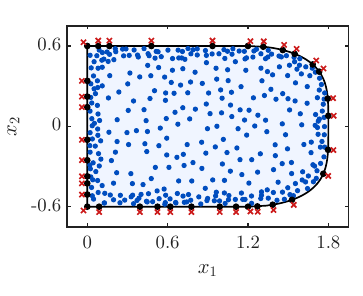}
\end{tabular}
\caption{The stacked representation of the treasure chest \eqref{eq:treasure_chest}. (a) Cap parameter region with one cell (shaded) and its support $S_j$ (dashed). (b) The patch $X(S_j)$ (orange), its ball $B_j$, boundary nodes (dots), and ghost nodes in $B_j$ (crosses). (c) The cover $\bigcup_jB_j$ (gray) and free cells in $\Omega$ (blue), $x_3=0$. (d) Interior (blue), boundary (black), and ghost (red) nodes, $|x_3|<h/2$. (a), (c): $h=0.04$; (b), (d): $h=0.08$.}
\label{fig:node_stack}
\end{figure}

Every layer and both sampling stages run in JAX~\cite{jax2018github} on the GPU (Section~\ref{sec:node_gen_gpu}), and Table~\ref{tab:node_gen_parameters} in Appendix~\ref{sec:app_parameters} lists the parameter values.

\subsection{The data layer}
\label{sec:node_gen_description}
We split $\partial\Omega$ into $N_\Gamma$ smooth pieces, faces of a surface or arcs of a curve, which meet along edges and at corners,
\begin{equation}
 \partial\Omega=\bigcup_{\ell=1}^{N_\Gamma}\Gamma_\ell .
\end{equation}
As in~\cite{shankar2018}, a piece is given by seed nodes with parameter values. However, a piece is not closed, so its parameter values fill a region $U_\ell\subset\R^{d-1}$ on which we assume $\Gamma_\ell$ has a smooth, injective parameterization. The boundary of $U_\ell$ consists of the trimming loops of the piece, which map to its edges. For a face, each loop is a polygon in the parameter plane given by its vertices, one for the outer boundary and one per hole; for an arc, $U_\ell$ is an interval whose endpoints are the vertices. Supersampling fills only the region inside the loops. We denote a parameter by $u\in U_\ell$ and the outward unit normal of $\Gamma_\ell$ at $y\in\Gamma_\ell$ by $n_\ell(y)$.

\paragraph{Internal obstacles}
As in~\cite{shankar2018}, the boundary may include internal obstacles; however, ours may be piecewise smooth and are given, like the outer wall, by the seed nodes of their pieces. The pieces of all components are numbered together, so $\Gamma_1,\dots,\Gamma_{N_\Gamma}$ may belong to different components, and on every piece the normal $n_\ell$ points out of $\Omega$, so on an obstacle it points into the obstacle. With this convention no step of Algorithm~\ref{alg:node_gen} depends on the number or shape of the components. The features of an obstacle come from its own loops, and its ghost nodes lie inside it (Section~\ref{sec:node_gen_sampling}). Its balls join the cover, and the extension of $\phi$ assigns the sign $-1$ to the free cells inside it (Sections~\ref{sec:node_gen_levelset_math} and~\ref{sec:node_gen_farfield}). Interior nodes avoid the obstacle, where $\phi<0$ (Section~\ref{sec:node_gen_interior_math}), and the stencil visibility test detects it (Section~\ref{sec:forward_stencils}). Since the thinning \eqref{eq:boundary_thinning} does not distinguish components, gaps between them must be wider than about $h$, and an obstacle must be thicker than $h/2$ to hold its ghost nodes.

The data for a curved piece $\Gamma_\ell$ are seed nodes for the fit and separate seed nodes for validation,
\begin{align}
 \mathcal{D}_\ell&=\bigl\{(u_{\ell k},\,y_{\ell k})\bigr\}_{k=1}^{N_\ell}, & y_{\ell k}&\in\Gamma_\ell \text{ is the point with parameter } u_{\ell k}\in U_\ell,\\
 \mathcal{V}_\ell&=\bigl\{(u_{\ell v},\,y_{\ell v},\,n_{\ell v})\bigr\}_{v=1}^{N^{\mathrm v}_\ell}, & n_{\ell v}&=n_\ell(y_{\ell v}).
\end{align}
The data for a planar piece are a point $o_\ell\in\R^d$ in its plane, a matrix $E_\ell\in\R^{d\times(d-1)}$ with $E_\ell^{T}E_\ell=I$ whose columns span the plane, and the trimming loops in the coordinates of $E_\ell$. A planar piece may also be given by seed nodes like a curved one.

If the seed nodes of a piece come with normals but without parameter values, our generator assigns them by projecting the nodes onto their principal axis (an arc), onto their best-fit plane (a nearly flat face), or stereographically from a point below the piece (a dome-like face). Appendix~\ref{sec:app_charts} gives the charts and the tests that select one; if none passes, we stop with an error.

\subsection{The parametric layer}
\label{sec:node_gen_boundary_math}
We now construct the maps $X_\ell:U_\ell\to\R^d$. For a planar piece the map is exact,
\begin{equation}
 X_\ell(u)=o_\ell+E_\ell u .
\end{equation}
For a curved piece, whose parameter region is bounded by trimming loops rather than being the circle or the sphere of~\cite{shankar2018}, we construct $X_\ell$ from $\mathcal{D}_\ell$ as a partition of unity~\cite{babuska1997,melenk1996partition,wendland2004} of local interpolants on overlapping rectangles, blended by weights that sum to one~\cite{shepard1968two}. We fix one curved piece, drop the index $\ell$, and write $\mathcal{D}=\{(u_k,y_k)\}_{k=1}^{N}$ and $\mathcal{V}=\{(u_v,y_v,n_v)\}$. With the bounding box $[u^-,u^+]$ of $U$, $u^-,u^+\in\R^{d-1}$, we normalize the parameters to $[-1,1]^{d-1}$ by
\begin{equation}
 z(u)=2\operatorname{diag}(u^+-u^-)^{-1}(u-u^-)-\mathbf{1},
\end{equation}
and write $z_k=z(u_k)$.

\paragraph{Cells and supports}
We cover $[-1,1]^{d-1}$ by axis-aligned cells with disjoint interiors,
\begin{equation}
 C_j=c_j+\operatorname{diag}(\mathbf{s}_j)[-1,1]^{d-1},\qquad j=1,\dots,N_{\mathrm{cell}},
\end{equation}
with centers $c_j\in[-1,1]^{d-1}$ and half-widths $\mathbf{s}_j\in\R^{d-1}$. For an overlap factor $\kappa>1$ we define the support of cell $j$ and its local coordinates by
\begin{equation}
 S_j=c_j+\kappa\operatorname{diag}(\mathbf{s}_j)(-1,1)^{d-1},\qquad
 q_j(z)=\bigl(\kappa\operatorname{diag}(\mathbf{s}_j)\bigr)^{-1}(z-c_j),
\end{equation}
so that $z\in S_j$ if and only if $\norm{q_j(z)}_\infty<1$. We write $q_{ja}$, $a=1,\dots,d-1$, for the components of $q_j$. The parameter region of a planar piece is covered in the same way, but its supports only define balls of the implicit layer.

\paragraph{Local interpolants}
On the support $S_j$ we use the seed nodes with indices
\begin{equation}
 K_j=\{k:\ z_k\in S_j\},
\end{equation}
their mean $\bar y_j=|K_j|^{-1}\sum_{k\in K_j}y_k$, and their local coordinates $q_{jk}=q_j(z_k)$. The local interpolant is a cubic polyharmonic spline~\cite{duchon1977splines} with affine augmentation,
\begin{equation}
 X_j(z)=\bar y_j+\sum_{k\in K_j}\alpha_{jk}\norm{q_j(z)-q_{jk}}^3+\Lambda_j\begin{pmatrix}1\\ q_j(z)\end{pmatrix},
 \qquad \alpha_{jk}\in\R^d,\ \Lambda_j\in\R^{d\times d}.
 \label{eq:coordinates}
\end{equation}
Let $\mathbf{A}_j\in\R^{|K_j|\times|K_j|}$ and $\mathbf{P}_j\in\R^{|K_j|\times d}$ have the entries and rows
\begin{equation}
 (\mathbf{A}_j)_{kk'}=\norm{q_{jk}-q_{jk'}}^3,\qquad
 (\mathbf{P}_j)_{k,:}=\bigl(1,\ q_{jk}^{T}\bigr),\qquad k,k'\in K_j,
\end{equation}
and let $\boldsymbol{\alpha}_j$ and $\mathbf{Y}_j$ be the $|K_j|\times d$ matrices with rows $\alpha_{jk}^{T}$ and $(y_k-\bar y_j)^{T}$. The interpolation conditions $X_j(z_k)=y_k$, $k\in K_j$, and the moment conditions $\mathbf{P}_j^{T}\boldsymbol{\alpha}_j=0$ form the system
\begin{equation}
 \begin{pmatrix}\mathbf{A}_j&\mathbf{P}_j\\ \mathbf{P}_j^{T}&0\end{pmatrix}
 \begin{pmatrix}\boldsymbol{\alpha}_j\\ \Lambda_j^{T}\end{pmatrix}
 =\begin{pmatrix}\mathbf{Y}_j\\ 0\end{pmatrix}.
 \label{eq:pu_local_system}
\end{equation}
Recall that this system is nonsingular if $\mathbf{P}_j$ has full column rank $d$~\cite{micchelli1986interpolation,wendland2004}, which holds if and only if the points $q_{jk}$ do not all lie in one hyperplane of $\R^{d-1}$. On every support that holds seed nodes we require this and $|K_j|\geq m_{\min}$ for a prescribed integer $m_{\min}$, stop with an error otherwise, and solve \eqref{eq:pu_local_system} with a small diagonal shift of $\mathbf{A}_j$ (Table~\ref{tab:node_gen_parameters}).

\paragraph{The blended map}
With the tensor-product Wendland weights~\cite{wendland1995}
\begin{equation}
 \psi_j(z)=\prod_{a=1}^{d-1}\bigl(1-|q_{ja}(z)|\bigr)_+^4\bigl(4|q_{ja}(z)|+1\bigr),
\end{equation}
which are positive on $S_j$ and vanish outside it, we define
\begin{equation}
 X(u)=\frac{\sum_{j=1}^{N_{\mathrm{cell}}}\psi_j(z(u))\,X_j(z(u))}{\sum_{j=1}^{N_{\mathrm{cell}}}\psi_j(z(u))}.
\end{equation}
Supports without seed nodes lie outside $z(U)$ and are dropped. The denominator is positive at every $z$ in a cell whose support holds seed nodes, in particular on $z(U)$, because $z$ lies in some closed cell $C_j$ and $C_j\subset S_j$. The Jacobian
\begin{equation}
 DX(u)=\bigl(\partial_{u_1}X(u),\dots,\partial_{u_{d-1}}X(u)\bigr)\in\R^{d\times(d-1)},
\end{equation}
including the derivatives of the weights, is computed by forward-mode automatic differentiation~\cite{griewank2008evaluating}. The unit normal, with its sign chosen once per piece so that it points out of $\Omega$, is
\begin{equation}
 n(u)=\begin{cases}
 \pm\dfrac{\partial_{u_1}X(u)\times\partial_{u_2}X(u)}{\norm{\partial_{u_1}X(u)\times\partial_{u_2}X(u)}}, & d=3,\\[2ex]
 \pm\dfrac{1}{\norm{DX(u)}}\begin{pmatrix}0&1\\-1&0\end{pmatrix}DX(u), & d=2.
 \end{cases}
\end{equation}

\paragraph{Adaptive refinement}
Since fixed cells may be too coarse for strongly curved pieces, we accept the map only if, at every validation node $u_v$, the position error $\norm{X(u_v)-y_v}$, the normal error $\norm{n(u_v)-n_v}$, and the inverse condition number $\sigma_{\min}(DX(u_v))/\sigma_{\max}(DX(u_v))$ meet prescribed tolerances. Starting from $2^{d-1}$ cells, we split every cell whose support contains a failing validation node into $2^{d-1}$ children and refit. After three unsuccessful refinements we stop with an error (Appendix~\ref{sec:app_refine}).

\subsection{Boundary and ghost nodes from the parametric layer}
\label{sec:node_gen_sampling}
We now sample the boundary nodes from the parametric layer. As in~\cite{shankar2018}, we supersample each piece in parameter space and thin the mapped samples to spacing $h$. However, sample elimination~\cite{yuksel2015sample} visits the samples in an arbitrary order, so a sample of a smooth piece may remove the node that would sit on an edge or a corner. We therefore offer the edges and corners first, much as Delaunay refinement of piecewise-smooth complexes protects vertices, edges, and faces in that order~\cite{cheng2010delaunay}.

\paragraph{Parameter samples}
For a curved piece $\Gamma_\ell$ we bound the stretching of $X_\ell$ by
\begin{equation}
 \bar\sigma_\ell=\max_{u\in U^{\mathrm g}_\ell}\sigma_{\max}\bigl(DX_\ell(u)\bigr),
\end{equation}
where $U^{\mathrm g}_\ell$ is a fine grid in $U_\ell$. Thus $\norm{X_\ell(u)-X_\ell(u')}\lesssim\bar\sigma_\ell\norm{u-u'}$. For a surface piece, $\mathcal{P}_\ell\subset U_\ell$ consists of points on the trimming loops at spacing $r_\ell$, followed by an approximately maximal Poisson-disk sample~\cite{ebeida2011efficient} with separation $r_\ell$ (i.e., every point of $U_\ell$ lies within about $r_\ell$ of a sample), where
\begin{equation}
 r_\ell=\frac{h}{1.1\sqrt2\,\bar\sigma_\ell}\ \text{ for a curved piece},\qquad
 r_\ell=\frac{h}{\sqrt2}\ \text{ for a planar piece}.
\end{equation}
For a curve, $\mathcal{P}_\ell$ consists of equispaced parameters, including both endpoints, with spacing at most $h/(2.2\,\bar\sigma_\ell)$. Every point of the piece therefore lies within roughly $h/\sqrt2$ of $X_\ell(\mathcal{P}_\ell)$, and the thinning sets the final spacing.

\paragraph{Features}
We sample every mapped loop at its vertices and at arc-length spacing at most $h/4$ along each side. For a sample $y$ let $\mathcal{I}(y)$ be the set of pieces whose mapped loops pass within $\delta_{\mathcal F}=0.02h$ of $y$. The samples with $|\mathcal{I}(y)|\geq2$ at which the normals of the pieces in $\mathcal{I}(y)$ differ by more than $\theta_{\mathcal F}=2^\circ$ are the feature sites $x^{\mathcal F}_1,\dots,x^{\mathcal F}_{N_{\mathcal F}}$; a smooth seam thus carries none. The site $x^{\mathcal F}_i$ receives the normalized sum of the normals of its pieces and a level,
\begin{equation}
 n^{\mathcal F}_i=\frac{\sum_{\ell\in \mathcal{I}(x^{\mathcal F}_i)}n_\ell(x^{\mathcal F}_i)}{\norm{\sum_{\ell\in \mathcal{I}(x^{\mathcal F}_i)}n_\ell(x^{\mathcal F}_i)}},\qquad
 \pi_i=\min\bigl(|\mathcal{I}(x^{\mathcal F}_i)|-1,\ d-1\bigr),
\end{equation}
where $n_\ell(x^{\mathcal F}_i)$ is the normal of $\Gamma_\ell$ at the nearest point of its loop. For generic arrangements $\pi_i$ is the codimension of the feature in $\partial\Omega$, with $\pi_i=1$ at a corner of a curve or on a crease where two surface pieces meet, and $\pi_i=2$ at a vertex where three or more meet. Where two loops are tangent, they stay within $\delta_{\mathcal F}$ of each other over a short stretch, and the sites along it count all pieces involved.

\paragraph{Ordering and thinning}
We order the feature sites by decreasing level, $\pi_{i_1}\geq \pi_{i_2}\geq\dots\geq \pi_{i_{N_{\mathcal F}}}$, with ties in the order of the pieces and of their loops, and form the sequence
\begin{equation}
 (x_1,x_2,\dots,x_n)=\bigl(x^{\mathcal F}_{i_1},\dots,x^{\mathcal F}_{i_{N_{\mathcal F}}},\ X_1(\mathcal{P}_1),\ \dots,\ X_{N_\Gamma}(\mathcal{P}_{N_\Gamma})\bigr).
\end{equation}
We then keep $x_1$ and, for $i=2,\dots,n$, keep $x_i$ if
\begin{equation}
 \norm{x_i-x_k}\geq h\quad\text{for every kept }x_k\text{ with }k<i .
 \label{eq:boundary_thinning}
\end{equation}
The kept points form $\chib$. To each $x_b\in\chib$ we assign the unit normal
\begin{equation}
 n_b=\begin{cases}n^{\mathcal F}_i, & x_b=x^{\mathcal F}_i \text{ is a feature site},\\ n_\ell(u), & x_b=X_\ell(u),\ u\in\mathcal{P}_\ell,\end{cases}
\end{equation}
where $n_\ell$ is the normal of the parametric layer, and, as in the earlier method, place one ghost node along each normal~\cite{fornberg2002observations},
\begin{equation}
 \chig=\bigl\{x_b+\tfrac12 h\,n_b:\ x_b\in\chib\bigr\}.
 \label{eq:ghost_nodes}
\end{equation}

\subsection{The implicit layer}
\label{sec:node_gen_levelset_math}
To decide whether a point $x\in\R^d$ lies in $\Omega$ without inverting the maps $X_\ell$, we construct a function $\phi:\R^d\to\R$ that is positive in $\Omega$ and negative outside $\overline\Omega$, except within a distance of order $h$ from $\partial\Omega$, and classify $x$ by the sign of $\phi(x)$, as for radial basis function and partition-of-unity implicit surfaces~\cite{carr2001reconstruction,turk2002modelling,ohtake2003}. Near the boundary, $\phi$ is a partition of unity of local functions that vanish at the boundary samples in their supports and increase into $\Omega$, as in the curl-free partition of unity (CFPU) method of Drake, Fuselier, and Wright~\cite{drake2022}, but with supports taken from the parametric layer. We fit $\phi$ to points $\xi_k\in\partial\Omega$ with outward unit normals $\hat n_k$, $k=1,\dots,N_\xi$, namely the boundary nodes $x_b$ with their normals $n_b$ (Section~\ref{sec:node_gen_sampling}) and a denser sample of the pieces near the feature sites (Appendix~\ref{sec:app_cover}).

\paragraph{One ball per parametric support}
Each support $S_j$, on a piece with map $X$, is mapped to a patch $X(S_j)\subset\partial\Omega$, which is covered by the ball $B_j$ whose center $\bar x_j$ is the mean of the images of the Gauss--Legendre points in $S_j$ and whose radius is
\begin{equation}
 R_j=\kappa_B\max\Bigl(\max_{y\in X(S_j)}\norm{y-\bar x_j},\,2\bar s\Bigr),
\end{equation}
with an enlargement factor $\kappa_B>1$ and the median nearest-neighbor distance $\bar s$ of the samples $\xi_k$; the maximum is evaluated at the same Gauss--Legendre points. Since the supports cover the parameter regions, the balls cover $\partial\Omega$ up to the resolution of the quadrature, and we check that every sample lies in some ball (Appendix~\ref{sec:app_cover}). Let $I_j=\{k:\ \xi_k\in B_j\}$ index the samples in $B_j$.

\paragraph{Local functions}
In $B_j$ we use the scaled coordinates $\hat x=(x-\bar x_j)/R_j$ and $\hat x_k=(\xi_k-\bar x_j)/R_j$. We first seek a scalar potential $s_j$ whose gradient interpolates the normals~\cite{macedo2011hermite}, $\nabla s_j(\hat x_i)=\hat n_i$ for $i\in I_j$. With the polyharmonic spline~\cite{duchon1977splines} $\varphi(\hat x)=\norm{\hat x}^3$ we take
\begin{equation}
 s_j(\hat x)=-\sum_{k\in I_j}\lambda_{jk}\cdot\nabla\varphi(\hat x-\hat x_k)+b_j\cdot\hat x,\qquad \lambda_{jk},\,b_j\in\R^d,
 \label{eq:cfpu_potential}
\end{equation}
whose gradient is $\nabla s_j(\hat x)=\sum_k\Phi(\hat x-\hat x_k)\lambda_{jk}+b_j$ with the curl-free kernel~\cite{narcowich1994generalized,fuselier2008sobolev}
\begin{equation}
 \Phi(\hat x)=-\nabla\nabla^{T}\varphi(\hat x)=-3\Bigl(\norm{\hat x}I+\frac{\hat x\hat x^{T}}{\norm{\hat x}}\Bigr)\in\R^{d\times d}.
\end{equation}
The interpolation and moment conditions form the saddle-point system
\begin{equation}
 \sum_{k\in I_j}\Phi(\hat x_i-\hat x_k)\lambda_{jk}+b_j=\hat n_i,\quad i\in I_j,\qquad \sum_{k\in I_j}\lambda_{jk}=0,
 \label{eq:cfpu_system}
\end{equation}
of size $d(|I_j|+1)$, which we also solve with a small diagonal shift (Table~\ref{tab:node_gen_parameters}). Since $s_j$ is constant along $\partial\Omega$ only up to the error of this fit, we subtract the interpolant $\tilde s_j(\hat x)=\sum_{k\in I_j}\tilde\lambda_{jk}\norm{\hat x-\hat x_k}+\tilde\lambda_{j0}$ of its values $s_j(\hat x_k)$, $k\in I_j$, with $\sum_k\tilde\lambda_{jk}=0$, and define the local function
\begin{equation}
 f_j(x)=R_j\bigl(\tilde s_j(\hat x)-s_j(\hat x)\bigr).
\end{equation}
It vanishes at every sample in $B_j$, and since $\tilde s_j$ only corrects the small variation of $s_j$ along the boundary, $\nabla f_j\approx-\hat n$ near the samples, so $f_j$ increases into $\Omega$. Finally, with the Wendland weights~\cite{wendland1995} $\omega_j(x)=(1-r_j)_+^4(4r_j+1)$, $r_j=\norm{x-\bar x_j}/R_j$, we blend the local functions on the union of the balls,
\begin{equation}
 f_{\mathrm{loc}}(x)=\frac{\sum_j\omega_j(x)f_j(x)}{\sum_j\omega_j(x)},\qquad x\in\bigcup_jB_j .
\end{equation}

\subsection{The implicit layer away from the boundary}
\label{sec:node_gen_farfield}
We now extend the implicit layer to all of space. Let
\begin{equation}
 \mathcal{U}=\R^d\setminus\bigcup_jB_j
\end{equation}
be the uncovered region, where every weight $\omega_j$ vanishes and $f_{\mathrm{loc}}$ is undefined. Since the balls cover $\partial\Omega$ (Section~\ref{sec:node_gen_levelset_math}), $\mathcal{U}\cap\partial\Omega=\emptyset$ up to the resolution of the quadrature, so a connected subset of $\mathcal{U}$ lies either in $\Omega$ or in $\R^d\setminus\overline\Omega$. We assign each such region the sign of $f_{\mathrm{loc}}$ on the adjacent covered region, using a background grid.

\paragraph{Background grid}
Let $\mathbb{B}'$ be the bounding box of the samples $\{\xi_k\}$, enlarged by $5\%$ of its side lengths on each side, and let $h_{\mathrm g}=\tfrac12\min_jR_j$. We divide $\mathbb{B}'$ into cubic cells $V_\iota$ of width $h_{\mathrm g}$ with centers $c_\iota$. Since every radius satisfies $R_j\geq2h_{\mathrm g}>\tfrac12\sqrt d\,h_{\mathrm g}$, every ball contains a cell center. A cell is \emph{covered} if $\bar\omega(c_\iota)>0$, where
\begin{equation}
 \bar\omega(x)=\sum_j\omega_j(x),
\end{equation}
and \emph{free} otherwise. Two cells are adjacent if they share a face.

\paragraph{Component signs}
We assign a sign $\varsigma_\iota\in\{-1,1\}$ to every cell in three steps.
\begin{enumerate}
 \item For a covered cell, $\varsigma_\iota=\operatorname{sign}f_{\mathrm{loc}}(c_\iota)$.
 \item We label the connected components of the free cells by min-label propagation.
 \item For each component $\mathcal{C}$, let $\Sigma_{\mathcal{C}}$ be the set of signs $\varsigma_\iota$ of the covered cells adjacent to $\mathcal{C}$. If $\Sigma_{\mathcal{C}}$ contains exactly one sign, we set $\varsigma_\iota$ to that sign for every cell of $\mathcal{C}$.
\end{enumerate}
If some $\Sigma_{\mathcal{C}}$ contains both signs (i.e., the cover does not separate $\Omega$ from its exterior) or is empty, we use instead the generalized winding number~\cite{jacobson2013,barill2018fast} of the boundary, evaluated with the quadrature points of the pieces and their area-weighted normals.

\paragraph{The extended function}
Let $\phi_\infty$ be the diameter of the bounding box of $\{\xi_k\}$, and let $\iota(x)$ be the index of the cell that contains $x\in\mathbb{B}'$. The implicit layer is
\begin{equation}
 \phi(x)=\begin{cases}
 f_{\mathrm{loc}}(x), & \bar\omega(x)>0,\\
 \phi_\infty\,\varsigma_{\iota(x)}, & \bar\omega(x)=0,\ x\in\mathbb{B}',\\
 -\phi_\infty, & \bar\omega(x)=0,\ x\notin\mathbb{B}'.
 \end{cases}
 \label{eq:implicit_blend}
\end{equation}
Note that $\phi$ is discontinuous at the boundary of the cover. However, away from $\partial\Omega$ only its sign is used, and near $\partial\Omega$, $\phi=f_{\mathrm{loc}}$. Finally, we verify that $|\phi(\xi_k)|\leq10^{-8}\phi_\infty$ for every $k$.

\subsection{Interior nodes from the implicit layer}
\label{sec:node_gen_interior_math}
We now place the interior nodes. In the earlier method, a bounding box is filled with nodes at spacing $h$, the nodes outside are removed, and a refined layer is obtained by copying the boundary nodes inward. We keep this idea, but test inside by the sign of the implicit layer and sample the refined layer at a smaller spacing, as in variable-density node generators~\cite{fornberg2015fast,slak2019}. For parameters $\rho\in(0,1)$ and $\beta>0$ the spacing is
\begin{equation}
 r(x)=\begin{cases}\rho h, & \operatorname{dist}(x,\chib)\leq\beta h,\\ h, & \text{otherwise}.\end{cases}
\end{equation}
Since the ghost nodes lie at the distance $h/2$ from their boundary nodes, for $\beta\geq\tfrac12$ every boundary and ghost node lies within $\beta h$ of $\chib$, and $r=\rho h$ there.

Let $\mathbb{B}$ be the bounding box of $\partial\Omega$ and $\epsilon>0$. With \eqref{eq:implicit_blend}, interior nodes are admissible in
\begin{equation}
 \Omega_\epsilon=\bigl\{x\in\mathbb{B}:\ \phi(x)\geq\epsilon h\bigr\}.
\end{equation}
Holding $\chib$ and $\chig$ fixed, we seek an approximately maximal set $\chii\subset\Omega_\epsilon$ such that
\begin{equation}
 \norm{x-y}\geq\max\bigl(r(x),r(y)\bigr)\quad\text{for distinct } x,y\in\chii\cup\chib\cup\chig,\ \{x,y\}\not\subset\chib\cup\chig .
 \label{eq:interior_spacing}
\end{equation}
Condition \eqref{eq:interior_spacing} sets the clearance from boundary and ghost nodes, while the threshold $\epsilon h$ removes candidates near $\partial\Omega$, where the sign of $\phi$ is least reliable. We do not impose the clearance through $\phi$, since near convex edges and corners $\phi(x)$ can be smaller than $\operatorname{dist}(x,\partial\Omega)$ by more than $h$, so a condition $\phi(x)\geq\rho h$ would remove candidates several $h$ inside the domain.

Instead of Bridson's algorithm~\cite{bridson2007}, which visits the candidates one at a time, our generator constructs $\chii$ by dart throwing~\cite{dippe1985antialiasing,cook1986,mitchell1987generating}, evaluating all candidates of a pass in parallel (Section~\ref{sec:node_gen_mis}). Let $c_{\mathrm d}>0$ be a candidate density and $|\mathbb{B}|$ the volume of $\mathbb{B}$, and start with the fixed set $\mathcal{X}=\chib\cup\chig$. Each pass proceeds as follows.
\begin{enumerate}
 \item Draw $c_{\mathrm d}\,|\mathbb{B}|/(\rho h)^d$ candidates uniformly in $\mathbb{B}$, and discard those outside $\Omega_\epsilon$.
 \item Visit the remaining candidates in random order, and accept a candidate $y$ if $\norm{y-x}\geq\max(r(x),r(y))$ for every $x\in\mathcal{X}$ and every candidate accepted before it in this pass.
 \item Add the accepted candidates to $\chii$ and to $\mathcal{X}$.
\end{enumerate}

\subsection{Parallel algorithms}
\label{sec:node_gen_gpu}
We now describe the three parallel components of our generator: greedy independent sets for the boundary thinning and dart throwing, grouped local fits and evaluations, and the labeling of the free grid cells. Appendix~\ref{sec:app_listings_nodegen} lists the JAX kernels.

\subsubsection{Greedy independent sets}
\label{sec:node_gen_mis}
Let $x_1,\dots,x_n\in\R^d$ have radii $r_1,\dots,r_n>0$ and distinct priorities $\operatorname{rank}(1),\dots,\operatorname{rank}(n)$, where a smaller value means a higher priority, and let $\mathsf{G}$ be the conflict graph in which $i$ and $j$ are adjacent if $\norm{x_i-x_j}<\max(r_i,r_j)$. The sequential greedy algorithm first accepts a prescribed set of fixed points and then visits the remaining points in order of priority, accepting a point if none of its neighbors has been accepted. Since fixed points may be closer than their radii, the edges between them are removed from $\mathsf{G}$. With constant radius $h$ and the order of the sequence of Section~\ref{sec:node_gen_sampling} as priority, this is the thinning \eqref{eq:boundary_thinning}; with random priorities, and with the boundary, ghost, and previously accepted interior nodes as fixed points, it is one pass of dart throwing.

In our parallel algorithm, every point is labeled undecided ($\mathsf{U}$), in ($\mathsf{I}$), or out ($\mathsf{O}$), starting with the fixed points in $\mathsf{I}$ and all others in $\mathsf{U}$. In each round all undecided points are updated at once; a point with a neighbor in $\mathsf{I}$ moves to $\mathsf{O}$, and otherwise a point none of whose higher-priority neighbors is undecided moves to $\mathsf{I}$. Neighbors are found in the $3^d$ surrounding cells of a uniform grid with cell width $\max_ir_i$, and rounds run until no point is undecided (Appendix~\ref{sec:app_listings_nodegen}, Listings~\ref{lst:cell_table}, \ref{lst:greedy}, and~\ref{lst:dart}).

\begin{proposition}
\label{prop:mis}
The parallel algorithm terminates, and the points that end in $\mathsf{I}$ are exactly the points accepted by the sequential greedy algorithm.
\end{proposition}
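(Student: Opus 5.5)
The plan is to compare the parallel rounds with the sequential greedy order through an induction on priority. Write $\mathcal{G}$ for the set of points the sequential algorithm accepts: the fixed points, together with every non-fixed point $i$ that has no neighbor in $\mathcal{G}$ among the fixed points or the non-fixed points of higher priority. This recursion is well defined because the priorities are distinct and totally ordered. I will first record two invariants of the parallel algorithm. Labels $\mathsf{I}$ and $\mathsf{O}$ are never changed once assigned, since only undecided points are updated. Fixed points start in $\mathsf{I}$; edges between fixed points are removed, so no fixed point is ever forced out. The main claim is then: \emph{whenever a non-fixed point leaves $\mathsf{U}$, it goes to $\mathsf{I}$ if and only if it belongs to $\mathcal{G}$.}

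I will prove this claim by strong induction on priority, with one subtlety. Within a round, every undecided point reads the labels from the beginning of the round (synchronous update), so each decision depends only on labels that are already final. Suppose a non-fixed point $i$ moves to $\mathsf{O}$ in some round. Then it had a neighbor $k$ in $\mathsf{I}$. If $k$ is fixed, $k\in\mathcal{G}$ and $i\notin\mathcal{G}$. If $k$ is non-fixed, the induction gives $k\in\mathcal{G}$; either $k$ has higher priority, which excludes $i$ from $\mathcal{G}$ directly, or $k$ has lower priority.

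The lower-priority case is the main obstacle. I will rule it out by showing that a point cannot enter $\mathsf{I}$ while a higher-priority neighbor $i$ is still in $\mathsf{U}$. This follows from the update rule: $k$ moves to $\mathsf{I}$ only when none of its higher-priority neighbors is undecided. Combined with the persistence of labels, $i$ would already have been decided when $k$ entered $\mathsf{I}$, so $i$ could not later move to $\mathsf{O}$ because of $k$. Same-round conflicts cannot arise either: two adjacent points cannot both enter $\mathsf{I}$ in one round, because the lower-priority one sees the other undecided.

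Now suppose $i$ moves to $\mathsf{I}$. Then $i$ has no neighbor in $\mathsf{I}$ and all its higher-priority neighbors are decided, hence in $\mathsf{O}$. Fixed neighbors are never in $\mathsf{O}$, so $i$ has no fixed neighbors. By induction, none of its higher-priority non-fixed neighbors is in $\mathcal{G}$, so $i\in\mathcal{G}$.

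For termination, I will show that each round with $\mathsf{U}\neq\emptyset$ decides at least one point: the highest-priority undecided point $i$. Either $i$ has a neighbor in $\mathsf{I}$ and moves to $\mathsf{O}$, or it has no higher-priority undecided neighbor and moves to $\mathsf{I}$. Hence the algorithm terminates within $n$ rounds, and at the end every point is decided. By the claim, the final $\mathsf{I}$ equals $\mathcal{G}$.

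The remaining check is that the grid neighbor search finds all edges of $\mathsf{G}$. With cell width $\max_i r_i$, any pair at distance less than $\max(r_i,r_j)$ lies in adjacent cells among the $3^d$ surrounding ones. Therefore the rounds operate on exactly $\mathsf{G}$.
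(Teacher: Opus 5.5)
Your proposal is correct and follows essentially the same route as the paper: termination because the highest-priority undecided point is decided in every round, and induction along the priority order resting on the fact that no point can enter $\mathsf{I}$ while a higher-priority neighbor is still undecided. Your separate treatment of fixed neighbors is a small improvement. The paper asserts that the neighbor in $\mathsf{I}$ of a rejected point has higher priority, which tacitly assumes that fixed points are ranked ahead of all candidates, as they are in the dart-throwing pass.
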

\begin{proof}[Proof sketch]
By induction along the priority order, every point is decided no later than the first round in which none of its higher-priority neighbors is undecided, and it then enters the state that the sequential algorithm gives it, since these neighbors already have their sequential states (Appendix~\ref{sec:app_mis}).
\end{proof}

The number of rounds, the dependence length of the greedy algorithm, is at most the number of vertices on the longest priority-increasing path in $\mathsf{G}$. For random priorities it is $O(\log\deg_{\max}\,\log n)$ with high probability, where $\deg_{\max}$ is the largest degree in $\mathsf{G}$~\cite{blelloch2012}, and $O(\log n)$ with high probability for every graph~\cite{fischer2018}, so dart throwing takes few rounds. These bounds do not apply to the boundary thinning, whose priorities follow the sequence order, so its number of rounds can grow with the number of samples on a piece; by the proposition, however, it reproduces \eqref{eq:boundary_thinning} exactly. Our algorithm is the parallel greedy algorithm of~\cite{blelloch2012}, which keeps the priorities fixed and so returns the sequential greedy set; Luby's algorithm~\cite{luby1986} instead draws new priorities in every round. Wei~\cite{wei2008} parallelizes Poisson-disk sampling on the GPU through a phase decomposition of the background grid, and Bowers et al.~\cite{bowers2010parallel} extend it to surfaces.

\subsubsection{Grouped evaluation and fitting}
\label{sec:node_gen_grouping}
Both layers are weighted sums of local functions, with weights $\psi_j/\sum_i\psi_i$ and maps $X_j$ in the parametric layer (where $x$ stands for $z$), and $\omega_j/\sum_i\omega_i$ and $f_j$ in the implicit layer. Let $m(x)$ be the number of supports whose weight is positive at $x$. It equals one inside a single support and is largest near the corners of the cover, so padding every evaluation to the largest $m$ would waste much of the work. We therefore partition the query points into the groups $\mathcal{G}_e=\{x:\ 2^{e-1}<m(x)\leq2^e\}$, $e=0,1,2,\dots$, and, for $x\in\mathcal{G}_e$, sum over the $2^e$ supports with the largest weights at $x$, selected by a partial sort. Since all omitted weights are zero, the grouped sum is exact, and the number of local evaluations, $\sum_e2^e|\mathcal{G}_e|$, is at most $2\sum_xm(x)$ (Listing~\ref{lst:grouped} in Appendix~\ref{sec:app_listings_nodegen}).

The local fits are grouped in the same way, since the dense systems $\mathbf{M}_j\mathbf{c}_j=\mathbf{g}_j$ of a support, \eqref{eq:pu_local_system} or the two systems of Section~\ref{sec:node_gen_levelset_math}, have sizes affine in the number of points of the support. We place support $j$ in the group $e$ with $2^{e-1}<|K_j|\leq2^e$, and embed its system in one for $2^e$ points by the block $\operatorname{diag}(\mathbf{M}_j,I)$, which leaves $\mathbf{c}_j$ unchanged, and solve each group with one batched dense solve (Listing~\ref{lst:cfpu_fit} in Appendix~\ref{sec:app_listings_nodegen}).

\subsubsection{Connected components}
\label{sec:node_gen_labels}
We label the components of Section~\ref{sec:node_gen_farfield} by min-label propagation~\cite{hawick2010parallel}, which needs no sequential graph search. Every free cell starts with its own index as label, and in each round takes the smallest label among itself and its free neighbors. After as many rounds as the largest diameter of a component, measured in cells along face-adjacent paths, every cell carries the smallest index of its component. Each round is one parallel update, implemented by shifting the label array along the grid axes (Listing~\ref{lst:labels} in Appendix~\ref{sec:app_listings_nodegen}).

\section{Forward problems}
\label{sec:forward}
Having generated nodes in the domain and on its boundary, we now describe how our framework solves PDEs on them. We consider elliptic problems such as $-\nabla\cdot(K\nabla u)=f$ in $\Omega$ and semilinear parabolic problems $u_t=\mathcal{A}u+\mathcal{N}(t,u)$ in $\Omega$, each with Dirichlet or Neumann conditions $\mathcal{B}u=g$ on $\partial\Omega$. The discretization is standard RBF-FD on the node sets of Section~\ref{sec:node_gen}, with ghost nodes for the boundary conditions, SBDF3 in time (Section~\ref{sec:forward_ghosts}), and hyperviscosity for advection (Section~\ref{sec:forward_hyperviscosity}). We add stencil selection inside the domain (Sections~\ref{sec:forward_operators} and~\ref{sec:forward_stencils}), the batched GPU implementation in JAX, and a GPU direct solver that JAX can differentiate (Section~\ref{sec:forward_linear_algebra}). Algorithm~\ref{alg:forward} lists the steps and marks the new ones.

\begin{algorithm}[!b]
\caption{Forward solve on a node set.}
\label{alg:forward}
\begin{algorithmic}[1]
\Require nodes $\chii,\chib,\chig$ and implicit layer $\phi$ from Algorithm~\ref{alg:node_gen}; target order $p$; problem data
\Ensure discrete solution $U$ (elliptic) or $U^{n_t}$ at $t=T$ (parabolic)
\State\label{alg:fw_stencil} select the stencil of every node: the $n_s$ nearest nodes in the distance \eqref{eq:stencil_distance}, with visibility decided against $\phi$ (Section~\ref{sec:forward_stencils}, Appendix~\ref{sec:app_raytrace}) \Comment{new}
\State compute the weights of $L$ and $D_a$ in batched dense solves and store them in COO form (Section~\ref{sec:forward_operators})
\State assemble $A$ and $B$, e.g.\ \eqref{eq:variable_coefficient}, and the ghost-node closure \eqref{eq:ghost_closure} or the SBDF3 matrix \eqref{eq:sbdf3_matrix} (Section~\ref{sec:forward_ghosts})
\If{the problem has advection}
 \State compute $\mu_a$ by Arnoldi iteration and the hyperviscosity coefficients \eqref{eq:gamma} (Section~\ref{sec:forward_hyperviscosity})
\EndIf
\State\label{alg:fw_check} check that the rightmost eigenvalue of the reduced operator \eqref{eq:reduced_operator} has nonpositive real part; otherwise stop (Section~\ref{sec:forward_linear_algebra})
\State\label{alg:fw_factor} factor the matrix once with cuDSS, called as a differentiable JAX linear solve (Section~\ref{sec:forward_linear_algebra}) \Comment{new}
\If{the problem is elliptic}
 \State solve \eqref{eq:ghost_closure}
\Else
 \State\label{alg:fw_sbdf} choose $\Delta t$ by \eqref{eq:dt_rule} and run the SBDF3 loop \eqref{eq:sbdf3}, one solve with the factors per step (Section~\ref{sec:forward_time})
\EndIf
\end{algorithmic}
\end{algorithm}

Let $\chii$, $\chib$, and $\chig$ be the interior, boundary, and ghost nodes, with $N_i$, $N_b$, and $N_g=N_b$ elements. The $N=N_i+N_b$ interior and boundary nodes are the physical nodes; all $N_f=N+N_g$ nodes are ordered as interior, boundary, ghost, and indexed by $\mathrm{I}$, $\mathrm{B}$, $\mathrm{P}$ (physical), and $\mathrm{G}$. A discrete function is a vector
\begin{equation}
 U=(U_{\mathrm I},U_{\mathrm B},U_{\mathrm G})\in\R^{N_f},
\end{equation}
and $\bar U=(U_{\mathrm I},U_{\mathrm B})\in\R^{N}$ denotes its values at the physical nodes.

\subsection{Differential operators}
\label{sec:forward_operators}
Our differential operators are RBF-FD approximations~\cite{tolstykh2003,wright2006,fornberg2015solving} with polyharmonic splines and polynomial augmentation~\cite{flyer2016}. For an even target order $p$ and a second-order operator, each stencil uses polynomials of total degree $p+1$, the polyharmonic spline $r^m$ with $m=\max(p+1,5)$, and the
\begin{equation}
 n_s=2\binom{p+1+d}{d}+1
\end{equation}
nodes~\cite{flyer2016,bayona2017} closest to the center in the distance of Section~\ref{sec:forward_stencils}, ghost nodes included as in~\cite{shankar2018,shankar2021}. With the same stencils for the gradient, this gives matrices
\begin{equation}
 L\in\R^{N\times N_f},\qquad D_a\in\R^{N\times N_f},\quad a=1,\dots,d,
\end{equation}
that approximate $\Delta$ and $\partial_{x_a}$ at the physical nodes from values at all nodes. On quasi-uniform nodes with spacing $h$, $L$ is accurate to $O(h^{p})$ and $D_a$ to $O(h^{p+1})$ on smooth functions. Variable-coefficient operators are assembled from these; for example, we discretize $-\nabla\cdot(K\nabla u)=-K\Delta u-\nabla K\cdot\nabla u$ by
\begin{equation}
 -\operatorname{diag}(\bar K)\,L-\sum_{a=1}^{d}\operatorname{diag}(D_aK)\,D_a,
 \label{eq:variable_coefficient}
\end{equation}
where $K$ holds the coefficient at all nodes and $\bar K$ its values at the physical nodes.

\paragraph{Parallel assembly}
The weights of each row of $L$ solve a dense saddle-point system of size $n_s+\binom{p+1+d}{d}$ on the $n_s$ nodes of its stencil~\cite{flyer2016,bayona2017}. These systems are small and independent, so our implementation gathers the stencils of $n_{\mathrm{batch}}$ rows into an array of shape $(n_{\mathrm{batch}},n_s,d)$, forms the local systems with array operations along the leading axis, and calls one batched dense solve on the GPU (Listing~\ref{lst:weights} in Appendix~\ref{sec:app_listings_solvers}). The polynomial basis is evaluated in coordinates centered and scaled per stencil. A local system whose polynomial block has reciprocal condition number below $\sqrt{\epsilon_{\mathrm{mach}}}$, with $\epsilon_{\mathrm{mach}}$ the machine precision, is solved instead by column-pivoted QR~\cite{businger1965linear} of the block, and we stop if the block is numerically rank deficient.
The weights and stencil indices define the matrix in coordinate (COO) form, so a product with it is a gather followed by a scatter-add, differentiable in the weights and the vector (Listing~\ref{lst:matvec} in Appendix~\ref{sec:app_listings_solvers}).

\subsection{Stencil selection}
\label{sec:forward_stencils}
RBF-FD stencils are usually the Euclidean nearest nodes of their center $x_c$. However, next to a thin part of the domain, such as a wall of the H-channel of Section~\ref{sec:nodegen_results}, some of these nodes can lie on the other side of the wall, where the solution need not be related to that at $x_c$. Davydov and coworkers select stencils by geometric criteria aimed at accuracy rather than at the shape of the domain~\cite{davydov2011adaptive,oanh2017adaptive,davydov2023improved}. Instead, we take the $n_s$ nodes nearest to $x_c$ in a distance $d_c$ measured inside the domain. We call a segment $[x_c,y]$ \emph{visible} if $\phi\geq-\delta_\phi$ on it, where $\phi$ is the implicit layer of Section~\ref{sec:node_gen_levelset_math} and $\delta_\phi=0.05h$. For a physical node $y$ let $y^{\partial}=y$, and for a ghost node $y$ let $y^{\partial}$ be the boundary node along whose normal $y$ was placed. A node $y$ is visible from $x_c$ if $[x_c,y^{\partial}]$ is visible. Among the $4n_s$ Euclidean nearest nodes $x_1,\dots,x_{4n_s}$ of $x_c$ we set
\begin{equation}
 d_c(x_l)=\begin{cases}\norm{x_l-x_c}, & x_l\text{ visible from }x_c,\\ \min_j\bigl(\norm{x_j-x_c}+\norm{x^{\partial}_l-x_j}\bigr)+\norm{x_l-x^{\partial}_l}, & \text{otherwise},\end{cases}
 \label{eq:stencil_distance}
\end{equation}
where $j$ runs over the candidates $x_j$ with $\phi(x_j)<2h$ such that $x_j$ is visible from $x_c$ and $x_l$ is visible from $x_j$. Thus the minimum is taken over paths inside the domain that bend once near the boundary, a discrete analogue of the visibility and diffraction criteria for meshless methods in nonconvex bodies~\cite{belytschko1995crack,organ1996continuous,belytschko1996meshless}. To bound the cost, each hidden $x_l$ is tested only from the $8$ candidates $x_j$ with the smallest values of the bracket. Candidates without such a path rank after all others, in Euclidean order.

By the triangle inequality, $d_c(x)\geq\norm{x-x_c}$. Hence, if the $n_s$ Euclidean nearest nodes of $x_c$ are all visible, the stencil is the Euclidean one. Our implementation exploits this in two steps. First, we test the segments from every node to its $n_s$ Euclidean nearest nodes. Segments whose ends lie deep in the domain are visible by the certificate~\eqref{eq:visibility_certificate} of Appendix~\ref{sec:app_raytrace}, which uses only $\phi$ at the two nodes; the rest are decided by sphere tracing~\cite{hart1996} against $\phi$. Second, only for the nodes with a hidden segment among the first $n_s$ (i.e., those that see across the boundary), we evaluate \eqref{eq:stencil_distance} for all $4n_s$ candidates and take the $n_s$ smallest.

\subsection{Ghost-node closure and time stepping}
\label{sec:forward_ghosts}
\label{sec:forward_time}
Without ghost nodes, a boundary node carries only the boundary condition, and the stencils near the boundary are one-sided. Ghost nodes~\cite{fedoseyev2002improved,fornberg2002observations} add one unknown per boundary node, so that both the PDE and the boundary condition can be imposed there. For a problem $\mathcal{A}u=f$ in $\Omega$ with a boundary condition $\mathcal{B}u=g$ on $\partial\Omega$, let $A\in\R^{N\times N_f}$ approximate $\mathcal{A}$ at the physical nodes and $B\in\R^{N_b\times N_f}$ approximate $\mathcal{B}$ at the boundary nodes. We collocate the equation at every physical node and the boundary condition at the boundary nodes,
\begin{equation}
 AU=f|_{\chii\cup\chib},\qquad BU=g|_{\chib}.
 \label{eq:ghost_closure}
\end{equation}
These are $N_f$ equations for $N_f$ unknowns. For the Dirichlet condition $u=g$, $B$ selects $U_{\mathrm B}$. For the Neumann condition $\partial_nu=g$, the row of $B$ at $x_b$ holds the RBF-FD weights of $n_b\cdot\nabla$ on the stencil of $x_b$, with the normal $n_b$ of Section~\ref{sec:node_gen_sampling}. A flux condition $K\partial_nu=g$ is imposed as $\partial_nu=g/K$. For two of the index sets $X$ and $Y$, $A_{XY}$ has the rows of $A$ at $X$ and the columns at $Y$, and $A_Y$ and $B_Y$ are the columns of $A$ and $B$ at $Y$. The ghost values are determined by the boundary rows of $A$ (Dirichlet) or by $B$ (Neumann) whenever the square block $A_{\mathrm{BG}}$, respectively $B_{\mathrm G}$, is nonsingular, which we assume throughout. Eliminating them gives the reduced operator
\begin{equation}
 M=A_{\mathrm{II}}-A_{\mathrm{IG}}A_{\mathrm{BG}}^{-1}A_{\mathrm{BI}}\ \ \text{(Dirichlet)},\qquad
 M=A_{\mathrm P}-A_{\mathrm G}B_{\mathrm G}^{-1}B_{\mathrm P}\ \ \text{(Neumann)},
 \label{eq:reduced_operator}
\end{equation}
of size $N_i$ and $N$, whose spectrum governs the stability of the semi-discretization and which enters the error estimates of Section~\ref{sec:analysis}. Note that with Neumann conditions $M\mathbf 1=0$, since the stencils reproduce constants; a pure Neumann elliptic problem thus determines the solution only up to a constant, and its data satisfy the discrete compatibility condition only approximately. Following~\cite{bochev2005finite}, we add a multiplier $\lambda$ to every equation of \eqref{eq:ghost_closure} and append the constraint $\mathbf 1^TU=0$ over all $N_f$ nodes. Under the assumptions of Theorem~\ref{thm:elliptic}, the resulting bordered system of size $N_f+1$ is nonsingular. Finally, we shift $U$ by the constant that makes its mean over the physical nodes equal to that of a reference (the manufactured solution in Section~\ref{sec:forward_results}, the observations in Section~\ref{sec:inverse}); this changes neither \eqref{eq:ghost_closure} nor $\lambda$.

We now turn to time-dependent problems of the form
\begin{equation}
 u_t=\mathcal{A}u+\mathcal{N}(t,u)\ \text{in }\Omega,\qquad \mathcal{B}u=g\ \text{on }\partial\Omega,
 \label{eq:semilinear}
\end{equation}
where the linear operator $\mathcal{A}$ (e.g., $\nu\Delta$) is treated implicitly and $\mathcal{N}$ (reaction, advection, forcing) explicitly. Let $A$ approximate $\mathcal{A}$ as above, let $\mathcal{N}_h(t,U)\in\R^N$ approximate $\mathcal{N}$ at the physical nodes, and let $t_n=n\,\Delta t$. The third-order semi-implicit backward differentiation formula (SBDF3)~\cite{ascher1995} with the closure \eqref{eq:ghost_closure} at the new time level reads
\begin{equation}
 \begin{aligned}
 \tfrac{11}{6}\bar U^{n+1}-\Delta t\,AU^{n+1}
 &=3\bar U^{n}-\tfrac32\bar U^{n-1}+\tfrac13\bar U^{n-2}
 +\Delta t\bigl[3\mathcal{N}_h^{n}-3\mathcal{N}_h^{n-1}+\mathcal{N}_h^{n-2}\bigr],\\
 BU^{n+1}&=g(t_{n+1}),
 \end{aligned}
 \label{eq:sbdf3}
\end{equation}
with $\mathcal{N}_h^{j}=\mathcal{N}_h(t_j,U^j)$. Each step is a linear system of the form \eqref{eq:ghost_closure} whose matrix,
\begin{equation}
 \mathcal{K}=\begin{pmatrix}\tfrac{11}{6}\,[\,I\ 0\,]-\Delta t\,A\\ B\end{pmatrix}\in\R^{N_f\times N_f},
 \label{eq:sbdf3_matrix}
\end{equation}
does not change between steps; here $[\,I\ 0\,]$ selects the physical values of $U$, and the ghost values are recomputed at every step. The starting states at $t_0$, $t_1$, and $t_2$ are the exact solution in our tests (Section~\ref{sec:forward_results}). The time step is
\begin{equation}
 \Delta t=\frac{T}{n_t},\qquad n_t=\Bigl\lceil\frac{T}{\min\bigl(c_1h,\,c_2h^{p/3}\bigr)}\Bigr\rceil,
 \label{eq:dt_rule}
\end{equation}
with constants $c_1$ and $c_2$ given in Section~\ref{sec:forward_results}. The term $c_1h$ is the CFL restriction of the explicit terms, and $c_2h^{p/3}$ makes the $O(\Delta t^3)$ error of SBDF3 match the spatial error $O(h^p)$. For $p>3$, the second term is smaller only when $h<(c_1/c_2)^{3/(p-3)}$; for larger $h$, $\Delta t=c_1h$ and the time error is formally $O(h^3)$.

The integration is a single compiled JAX function whose time loop evaluates $\mathcal{N}_h$ with sparse products and solves with the factors of $\mathcal{K}$ (Section~\ref{sec:forward_linear_algebra}, Listing~\ref{lst:sbdf3} in Appendix~\ref{sec:app_listings_solvers}); JAX differentiates the loop in the parameters of $\mathcal{N}$.
\subsection{Hyperviscosity}
\label{sec:forward_hyperviscosity}
For advection, $\mathcal{N}$ contains $-v\cdot\nabla u$ with a velocity $v$ of maximum speed $v_{\max}$. Let $\tilde D_a\in\R^{N_f\times N_f}$ apply the stencils of $D_a$ at all nodes, ghost nodes included. On scattered nodes these square matrices can have eigenvalues with small positive real parts that destabilize the discretized advection over long times~\cite{platte2006eigenvalue,fornberg2011stabilization,shankar2018hyperviscosity}. Following~\cite{shankar2018hyperviscosity}, we damp these modes by adding $\gamma H U$ to the implicit operator, $A=\nu L+\gamma H$, with the power $k=p/2+1$, where $H\in\R^{N\times N_f}$ is a direct RBF-FD approximation of $\Delta^k$ whose stencils use polynomials of degree $2k-1$, the polyharmonic spline $r^{2k+1}$, and $2\binom{2k-1+d}{d}+1$ nodes. The rows of $H$ are set to zero at the boundary nodes, which keeps the physical equation in the rows that determine the ghost values, and at the interior nodes within distance $2h$ of $\chib$. There the advection is weak (our velocities vanish on the boundary), and the direct approximation of $\Delta^k$ can move eigenvalues of the reduced operator into the right half-plane. We set $\gamma=\sum_{a=1}^d\gamma_a$, where $\gamma_a$ balances the damping $|\gamma_a|(d\hat k^2)^k$ of the test mode $e_{\hat k}$ defined below against its growth $v_{\max}|\mu_a|\hat k^{q_a}$,
\begin{equation}
 \gamma_a=(-1)^{k+1}\,d^{-k}\,2^{\,q_a-2k}\,h^{\,2k-q_a}\,v_{\max}\,|\mu_a|,
 \label{eq:gamma}
\end{equation}
and the sign $(-1)^{k+1}$ makes $\gamma\Delta^k$ dissipative. The constant $\mu_a$ is the real part of the rightmost eigenvalue of $\tilde D_a$, and $q_a$ compares the error of $D_a$ on the oscillatory function $e_{\hat k}(x)=\exp\bigl(\mathrm{i}\hat k\,\mathbf 1\cdot x\bigr)$, $\hat k=2/h$, with $\mu_a$,
\begin{equation}
 q_a=\frac{\log\norm{\mathrm{i}\hat k\,\bar e_{\hat k}-D_ae_{\hat k}}-\log\bigl(|\mu_a|\,\norm{\bar e_{\hat k}}\bigr)}{\log\hat k}.
\end{equation}
We compute $\mu_a$ once per cloud and order by Arnoldi iteration on the GPU; $\gamma_a$ is fixed in time. However, since its stencils reproduce polynomials of degree $2k-1$ only, $H$ is not a consistent approximation of $\Delta^k$; $HU$ merely stays bounded as $h\to0$ for smooth $u$. If the $|\mu_a|$ stay bounded, then $\gamma=O(h^{2k-q})$ with $q=\max_aq_a$, and the added term is $O(h^{p+2-q})$, which for $q<2$ is of higher order than the truncation error $O(h^p)$.

\subsection{Linear algebra and stability checks}
\label{sec:forward_linear_algebra}
Every time step of SBDF3, and every elliptic solve, requires a sparse solve with the same matrix, so the factorization should be computed once and reused for thousands of substitutions. On the GPU, sparse direct factorizations are provided by NVIDIA's cuDSS~\cite{cudss}. However, JAX offers little support for this task. Its sparse matrix formats are experimental, and it has no sparse direct solver that keeps a factorization between calls. Moreover, a call into an external library is opaque to JAX; it can be compiled into a \texttt{jit}-transformed program, batched by \texttt{vmap}, or differentiated by \texttt{jvp} and \texttt{vjp} only if each of these behaviors is supplied explicitly. A naive wrapper, such as a host callback, would copy every right-hand side to the host, serialize the time loop, and break differentiation. We therefore built the following three components.

\paragraph{Sparse storage} Our operators remain in the coordinate (COO) form in which they are assembled (Section~\ref{sec:forward_operators}), and their products are gathers followed by scatter-adds that are differentiable in the weights and the vector (Listing~\ref{lst:matvec} in Appendix~\ref{sec:app_listings_solvers}). For cuDSS, we convert the matrix of \eqref{eq:ghost_closure}, or $\mathcal{K}$ in \eqref{eq:sbdf3_matrix}, to compressed sparse row (CSR) form on the GPU once per sparsity pattern, and for a nonsymmetric matrix we also form the CSR form of its transpose.

\paragraph{A native cuDSS primitive} We wrote a C++ extension that registers cuDSS solves as foreign-function (FFI) targets of XLA, the compiler behind JAX~\cite{jax2018github}. At setup, once per cloud and order, the extension runs the analysis and factorization phases of cuDSS and keeps the factors in GPU memory, in a registry indexed by an integer handle; for a nonsymmetric matrix it factors $\mathcal{K}^T$ as well. Inside a compiled program, a solve is an FFI call that receives the right-hand sides as a device buffer and the handle as an attribute, orders itself after the preceding work on JAX's CUDA stream through an event, and runs the cuDSS solve phase, so every time step costs one forward and one backward substitution and no data leave the GPU. Since the call reads shared native state, it is marked as having a side effect, so that the compiler neither removes nor reorders it. A matrix that depends on parameters, as \eqref{eq:variable_coefficient} does through $K$, keeps its sparsity pattern; a second target then copies the new values into the factor and runs the refactorization phase of cuDSS, which reuses the symbolic analysis~\cite{davis2006direct,davis2016survey}. A solution $U$ of $\mathcal{K}U=b$ is accepted only if its normwise backward error~\cite{rigal1967compatibility,higham2002accuracy} satisfies $\norm{\mathcal{K}U-b}\leq10^{-8}\bigl(\norm{\mathcal{K}}_F\norm{U}+\norm{b}\bigr)$, and we stop otherwise.

\paragraph{Differentiation and batching} The FFI call itself has neither a derivative nor a batching rule, and differentiating through the factorization would be both expensive and unnecessary. Two wrappers supply both rules (Listing~\ref{lst:cudss}). First, \texttt{jax.lax.custom\_linear\_solve}~\cite{jax2018github} declares the call a solution of $\mathcal{K}U=b$, with the COO product as the matrix action and the cuDSS solve with $\mathcal{K}^{T}$ as its transpose. JAX then differentiates $U$ implicitly, solving $\mathcal{K}\dot U=\dot b-\dot{\mathcal K}U$ with the same factors in forward mode and with $\mathcal{K}^{T}$ in reverse mode~\cite{giles2008collected}; the factorization is never differentiated. Second, without a batching rule, \texttt{vmap} would call cuDSS once per tangent direction; our rule instead passes a batch of right-hand sides to cuDSS as the columns of a single solve. A forward-mode derivative in $n_\theta$ directions thus costs, per step, one substitution for the state and one with $n_\theta$ right-hand sides for the tangents.
\begin{lstlisting}[float=tbp,language=Python,basicstyle=\ttfamily\lstsize,caption={A factored cuDSS system as a differentiable JAX solve with batched right-hand sides; \texttt{handle} refers to the cuDSS factors of $\mathcal{K}$ and \texttt{matvec} to Listing~\ref{lst:matvec}.},label={lst:cudss}]
@custom_vmap
def direct(b):                            # b: (N_f,) or (N_f, n_rhs)
    return cudss_solve(handle, b)         # jax.ffi call into cuDSS

@direct.def_vmap
def direct_batched(axis_size, in_batched, b):
    cols = jnp.moveaxis(b, 0, -1)         # batch axis -> extra columns
    return jnp.moveaxis(cudss_solve(handle, cols), -1, 0), True

# direct_T: the same two definitions with cudss_solve_transposed

def solve(b):
    return jax.lax.custom_linear_solve(
        lambda u: matvec(vals, u), b,
        solve=lambda _, rhs: direct(rhs),
        transpose_solve=lambda _, rhs: direct_T(rhs))
\end{lstlisting}

Finally, we detect unstable discretizations before solving by checking the reduced operator \eqref{eq:reduced_operator} of every implicit operator $A$ of Section~\ref{sec:forward_results}; with Neumann conditions we use $M-\tfrac1N\mathbf 1\mathbf 1^T$, whose eigenvalues are those of $M$ with the zero eigenvalue of the constants replaced by $-1$. The matrix $M$ is never formed; a product with it needs one solve with the factored block $A_{\mathrm{BG}}$ or $B_{\mathrm G}$. We compute its rightmost eigenvalues by a restarted Arnoldi iteration~\cite{arnoldi1951principle,sorensen1992implicit,stewart2002krylov} on the GPU and stop if a converged one has a positive real part or the iteration does not converge. However, this check is necessary but not sufficient for stability; it ignores the explicit terms, and since BDF3 is $A(\alpha)$-stable~\cite{widlund1967note} with $\alpha\approx86^\circ$~\cite{hairer1996} but not $A$-stable, eigenvalues with negative real part close to the imaginary axis can still cause instability. Section~\ref{sec:analysis} states the stability assumption of our error estimates.

\section{Inverse problems}
\label{sec:inverse}
\suppressfloats[t]
We now use our differentiable forward solvers to recover parameters of a PDE from values of its solution. Let $\theta\in\R^{n_\theta}$ be parameters of the problems of Section~\ref{sec:forward}, such as the coefficients of a permeability, of a reaction law, or of a velocity field. We recover $\theta$ by minimizing the misfit of the discrete solution (Section~\ref{sec:inverse_formulation}). Since the discrete solution is a JAX function of $\theta$, we obtain its Jacobian by differentiating through the operators, sparse solves, and time loop of Section~\ref{sec:forward}. We solve the least-squares problem by Gauss--Newton and Levenberg--Marquardt iterations, warm-started from one node set to the next (Section~\ref{sec:inverse_optimization}). Algorithm~\ref{alg:inverse} lists the steps and marks the new ones.

\begin{algorithm}[!b]
\caption{Parameter recovery across a sequence of node sets.}
\label{alg:inverse}
\begin{algorithmic}[1]
\Require node sets from coarsest to finest; observations $u_{\mathrm{obs}}$ on each
\Ensure recovered parameters $\theta_h$ on each node set
\State $\theta\gets0$
\For{each node set, from the coarsest to the finest}
 \State set up the discretization once: stencils, weights, assembly, stability check, and factorization of Algorithm~\ref{alg:forward}; when $\theta$ enters the operator, each evaluation of $r$ refactors numerically
 \State\label{alg:inv_residual} define $r(\theta)$ by \eqref{eq:inverse_residual}, with $J$ formed in one forward-mode pass that solves the $n_\theta$ tangents together with the cuDSS factors and stores only $J$ (Section~\ref{sec:inverse_optimization}) \Comment{new}
 \If{this is the coarsest node set}
  \State minimize $\tfrac12\norm{r}^2$ by Levenberg--Marquardt from $\theta$
 \Else
  \Repeat\label{alg:inv_gn} \Comment{Gauss--Newton from the previous $\theta_h$}
   \State $\theta\gets\theta-2^{-j}J^{+}r$ with the smallest $j\leq6$ that does not increase $\norm{r}$
  \Until{the step is below $10^{-10}(1+\norm{\theta})$ or no $j$ decreases $\norm{r}$; fall back to Levenberg--Marquardt if needed}
 \EndIf
 \State $\theta_h\gets\theta$
\EndFor
\end{algorithmic}
\end{algorithm}

\subsection{Formulation}
\label{sec:inverse_formulation}
Let $U(\theta)\in\R^{N_f}$ be the discrete solution, which for an elliptic problem solves \eqref{eq:ghost_closure} and for a parabolic problem is the SBDF3 state \eqref{eq:sbdf3} at the final time $T$. Given observations $u_{\mathrm{obs}}\in\R^{N}$ at the physical nodes, we minimize $\tfrac12\norm{r(\theta)}^2$ with the residual
\begin{equation}
 r(\theta)=\frac{\bar U(\theta)-u_{\mathrm{obs}}}{\norm{u_{\mathrm{obs}}}}\in\R^{N}.
 \label{eq:inverse_residual}
\end{equation}
The forcing $f$ and the boundary data $g$ are fixed. Note that for the pure Neumann Darcy problem, $\theta$ also enters the right-hand side $g/K(\theta)$ of the boundary rows, and the constant of Section~\ref{sec:forward_ghosts} is chosen so that $\bar U(\theta)$ and $u_{\mathrm{obs}}$ have the same mean. The parameters may enter the implicit operator, as a coefficient $K(x;\theta)$ in \eqref{eq:variable_coefficient} does, or the explicit terms $\mathcal{N}$ of \eqref{eq:semilinear}, as a reaction law or a velocity
\begin{equation}
 v(x;\theta)=\sum_{i=1}^{n_\theta}\theta_iw_i(x)
 \label{eq:velocity_basis}
\end{equation}
with fixed divergence-free basis fields $w_i$ that vanish on $\partial\Omega$. For a velocity, the hyperviscosity coefficients \eqref{eq:gamma} are computed once, with the prescribed maximum speed $v_{\max}$ of the true velocity, and kept fixed during the optimization, so that the implicit operator does not depend on $\theta$.

\subsection{Optimization and derivatives}
\label{sec:inverse_optimization}
On the coarsest cloud, and as a fallback, we minimize with the method of Levenberg and Marquardt~\cite{levenberg1944,marquardt1963} in Optimistix~\cite{optimistix2024}, with the linear least-squares subproblems solved by Lineax~\cite{lineax2023}; on the finer clouds we use the warm-started Gauss--Newton iteration below. Both need the Jacobian $J(\theta)=\partial r/\partial\theta\in\R^{N\times n_\theta}$. Since $n_\theta$ is small and $N$ is large, $J$ is computed in forward mode, with $n_\theta$ directional derivatives propagated together with the state~\cite{griewank2008evaluating}.

Recall that each linear solve in the forward map is the primitive of Listing~\ref{lst:cudss}, whose tangents are solved with the factors of $\mathcal{K}$ (Section~\ref{sec:forward_linear_algebra}) for all $n_\theta$ tangent right-hand sides together. When $\theta$ enters only $\mathcal{N}$, the matrix $\mathcal{K}$ does not depend on $\theta$, $\dot{\mathcal{K}}=0$, and the derivative of the loop of Listing~\ref{lst:sbdf3} in Appendix~\ref{sec:app_listings_solvers} is the same loop with $n_\theta$ tangent states carried alongside the state.

In reverse mode, the same primitive uses the transpose solve with the factors of $\mathcal{K}$. The adjoint of $U=\mathcal{K}^{-1}b$ is $\check b=\mathcal{K}^{-T}\check U$ and $\check{\mathcal{K}}_{ij}=-\check b_iU_j$ on the nonzero pattern of $\mathcal{K}$. Reverse mode gives the gradient of $\tfrac12\norm{r}^2$ with one adjoint solve per solve of the forward map, independent of $n_\theta$, and thus pays off for gradient methods with many parameters. However, for a time loop it stores the trajectory unless it is checkpointed~\cite{griewank2000revolve}, and since our problems have $n_\theta\leq6$, we use forward mode throughout.

Our warm start exploits the sequence of clouds. The iteration starts from $\theta=0$ on the coarsest cloud and, on every finer cloud, from the parameters recovered on the next coarser one. Their distance from the minimizer on the finer cloud is at most the sum of the parameter errors on the two clouds. We then take damped Gauss--Newton steps~\cite{dennis1996numerical,nocedal2006numerical} $\theta\leftarrow\theta-2^{-j}J^{+}r$, where $J^{+}$ is the Moore--Penrose pseudoinverse~\cite{penrose1955generalized} of $J$, with the smallest $j\in\{0,\dots,6\}$ for which $\norm{r}$ decreases. The iteration stops when $\norm{2^{-j}J^{+}r}\leq10^{-10}(1+\norm{\theta})$, or when no such $j$ exists. Since the residual at the minimizer $\theta_h$ is of the size of the discretization error, the Gauss--Newton iteration contracts by a factor of about $L_J\norm{r(\theta_h)}/\sigma_h^2$ per step~\cite{bjorck1996}, where $L_J$ is a Lipschitz constant of $J$ and $\sigma_h=\sigma_{\min}(J(\theta_h))$. If the residual or $J$ is not finite, or after $30$ steps, we continue with Levenberg--Marquardt.

Optimistix linearizes $r$ once per step and stores the intermediate values of the forward map for later products with $J$, which for a time loop with $n_t$ steps takes memory proportional to $n_tN$. Our derivative rule avoids this by forming $J$ in a single forward pass (Listing~\ref{lst:dense_jvp} in Appendix~\ref{sec:app_listings_solvers}). Since the rule depends only on the primal value of $\theta$, the linearization stores the $N\times n_\theta$ matrix $J$ alone and applies it to tangent vectors, while the iterates are unchanged.

\section{Error estimates}
\label{sec:analysis}
We now derive error estimates for our forward and inverse problems. Each bounds an error by a defect (the truncation error, or the forward error at the true parameters) times an amplification factor (a stability constant, or the inverse of the smallest singular value of the Jacobian). Polynomial reproduction bounds the truncation error~\cite{davydov2016,davydov2019}. However, uniform stability of RBF-FD collocation on scattered nodes is open (Tominec, Larsson, and Heryudono~\cite{tominec2021} analyzed an oversampled least-squares variant), so we measure the stability constants as in~\cite{schaback2017,larsson2022} (Section~\ref{sec:forward_results}). Proofs are in Appendix~\ref{sec:proofs}.

We write $\Pi v\in\R^{N_f}$ and $\bar\Pi v\in\R^N$ for the values of a function $v$ at all nodes and at the physical nodes; since our manufactured solutions are defined on all of $\R^d$, $\Pi u$ includes ghost values. Norms are $2$-norms unless marked otherwise.

\subsection{Forward problems}
\label{sec:analysis_forward}

\paragraph{Truncation error}
Suppose that a stencil $x_{c1},\dots,x_{cn_s}$ at $x_c$ with weights $w_{ck}$ reproduces an operator $\mathcal{L}$ of order $s$ on polynomials of degree $\varpi\geq s$, and that $u$ is smooth on the ball of radius $\varrho_c=\max_k\norm{x_{ck}-x_c}$ about $x_c$. The stencil then errs by at most the Taylor remainder of degree $\varpi+1$,
\begin{equation}
 \Bigl|\sum_{k=1}^{n_s}w_{ck}\,u(x_{ck})-(\mathcal{L}u)(x_c)\Bigr|\leq\frac{\varrho_c^{\varpi+1}}{(\varpi+1)!}\,|u|_{C^{\varpi+1}}\sum_{k=1}^{n_s}|w_{ck}|
 \label{eq:consistency}
\end{equation}
(Lemma~\ref{lem:consistency} in Appendix~\ref{sec:proof_consistency}). Thus, if $\varrho_c\leq Ch$ and the scaled stencil sums
\begin{equation}
 W_L=\max_c h^2\sum_k|L_{ck}|,\qquad W_D=\max_{a,c}h\sum_k|(D_a)_{ck}|
 \label{eq:stencil_sums}
\end{equation}
stay bounded, then with $\varpi=p+1$ the right-hand side is $O(h^p)$ for $L$ and $O(h^{p+1})$ for $D_a$ and the normal derivative. Every row of the closure \eqref{eq:ghost_closure}, including the variable-coefficient \eqref{eq:variable_coefficient} and boundary rows, therefore has a truncation error of order $h^p$ or higher.

\paragraph{Elliptic problems}
Let $\tau=(\tau_A,\tau_B)=(A\Pi u-\bar\Pi f,\ B\Pi u-\tilde g)$ be the truncation error, where $\tilde g$ is the right-hand side of the boundary rows ($g|_{\chib}$, or $(g/K)|_{\chib}$ for the flux condition), and $E=\Pi u-U$ the error. For our Darcy problem, with Neumann rows and the multiplier $\lambda$ of Section~\ref{sec:forward_ghosts}, the error solves
\begin{equation*}
 AE-\lambda\mathbf 1=\tau_A,\qquad BE-\lambda\mathbf 1=\tau_B .
\end{equation*}
The boundary rows give the ghost errors, $E_{\mathrm G}=B_{\mathrm G}^{-1}(\tau_B+\lambda\mathbf 1-B_{\mathrm P}\bar E)$, and eliminating them leaves
\begin{equation}
 M\bar E=\tau_A-G\tau_B+\lambda\upsilon,\qquad G=A_{\mathrm G}B_{\mathrm G}^{-1},\quad \upsilon=\mathbf 1-G\mathbf 1,
 \label{eq:error_equation}
\end{equation}
with the reduced operator $M$ of \eqref{eq:reduced_operator}. The entries of $G$ are of order $h^{-1}$ (Laplacian weights over normal-derivative weights), but $\tau_B=O(h^{p+1})$ compensates. Since $M\mathbf 1=0$, the equation is solvable only if its right-hand side lies in the range of $M$~\cite{bochev2005finite}; this fixes $\lambda$, the defect of the discrete compatibility condition, and our choice of the mean removes the remaining constant.

\begin{theorem}[error of the elliptic problem]
\label{thm:elliptic}
Suppose that the boundary rows determine the ghost values ($B_{\mathrm G}$ nonsingular), that the constants are the only null vectors of $M$, and that the compatibility defect can be removed ($\zeta^T\upsilon\neq0$, where $\zeta$ spans the null space of $M^T$). Let $M_Q$ be $M$ acting on vectors of mean zero, which it maps one-to-one onto $\zeta^\perp$. Then the multiplier is $\lambda=-\zeta^T(\tau_A-G\tau_B)/\zeta^T\upsilon$, and the error at the physical nodes satisfies
\begin{equation}
 \norm{\bar E}\leq S_h\bigl(\norm{\tau_A}+\norm{G}\norm{\tau_B}\bigr),\qquad S_h=\norm{M_Q^{-1}}\,\frac{\norm{\upsilon}\norm{\zeta}}{|\zeta^T\upsilon|}.
 \label{eq:elliptic_bound}
\end{equation}
\end{theorem}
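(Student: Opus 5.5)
The proof works directly from the reduced error equation \eqref{eq:error_equation}, $M\bar E=\tau_A-G\tau_B+\lambda\upsilon$, which was derived above by using the boundary rows to eliminate the ghost errors. That elimination requires only that $B_{\mathrm G}$ be nonsingular. The first step fixes $\lambda$. By assumption the null space of $M$ is spanned by $\mathbf 1$, so $M$ has rank $N-1$, and the range of $M$ is $\zeta^\perp$. The equation is therefore solvable exactly when $\zeta^T(\tau_A-G\tau_B)+\lambda\,\zeta^T\upsilon=0$. Since $\zeta^T\upsilon\neq0$, this gives the stated formula for $\lambda$.

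Substituting this $\lambda$, the right-hand side becomes
\begin{equation*}
 r=P(\tau_A-G\tau_B),\qquad P=I-\frac{\upsilon\zeta^T}{\zeta^T\upsilon}.
\end{equation*}
The map $P$ is an oblique projector onto $\zeta^\perp$ along $\upsilon$. I would then check that $P$ fixes $\upsilon$'s complement correctly, i.e.\ that $\zeta^TP=0$, and that $r$ is exactly the residual left after adding $\lambda\upsilon$. The norm of a nonzero rank-one oblique projector satisfies $\norm{P}=\norm{\upsilon}\norm{\zeta}/|\zeta^T\upsilon|$. This is a standard fact: $I-P$ is rank one with norm $\norm{\upsilon}\norm{\zeta}/|\zeta^T\upsilon|$, and for an idempotent $P\neq0,I$ one has $\norm{P}=\norm{I-P}$. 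This is where the factor in $S_h$ comes from, and I would cite or briefly reprove it; it is the only nonroutine ingredient.

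Next I would handle the constant. The discrete solution is shifted so that its physical mean matches that of the reference. When the reference is the manufactured solution, $\bar E$ has mean zero, so $\bar E$ lies in the domain of $M_Q$. Since $M_Q$ maps mean-zero vectors one-to-one onto $\zeta^\perp$, we can write $\bar E=M_Q^{-1}r$. Combining,
\begin{equation*}
 \norm{\bar E}\le\norm{M_Q^{-1}}\norm{P}\bigl(\norm{\tau_A}+\norm{G}\norm{\tau_B}\bigr),
\end{equation*}
which is \eqref{eq:elliptic_bound}.

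The main obstacle is justifying the mean-zero normalization carefully. The bordered system imposes $\mathbf 1^TU=0$ over all $N_f$ nodes, but the theorem is stated for the physical mean. So I would argue that the final shift does not change $\lambda$ or \eqref{eq:ghost_closure}, as noted in Section~\ref{sec:forward_ghosts}, and that the physical part of $E$ is therefore determined modulo constants by the reduced equation. The shift then selects the mean-zero representative.
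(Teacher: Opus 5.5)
Your proposal is correct and follows the paper's proof essentially step for step. You eliminate the ghost errors through $B_{\mathrm G}$, fix $\lambda$ by applying $\zeta^T$, identify the oblique projector $I-\upsilon\zeta^T/\zeta^T\upsilon$ whose norm equals that of its rank-one complement, and apply $M_Q^{-1}$ to the error, which has mean zero after the final shift; the shift leaves $\lambda$ and the closure unchanged because the stencils annihilate constants. One wording slip: you call $P$ itself a rank-one projector, but it is $I-P$ that has rank one, as your own justification makes clear.
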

\begin{proof}[Proof sketch]
Multiplying \eqref{eq:error_equation} by $\zeta^T$ gives $\lambda$. The remaining right-hand side is the projection of $\tau_A-G\tau_B$ onto $\zeta^\perp$ along $\upsilon$, whose norm is the middle factor, and $M_Q^{-1}$ maps it to the error, which has mean zero (Appendix~\ref{sec:proof_elliptic}).
\end{proof}

The factor $\norm{M_Q^{-1}}$ is the discrete counterpart of the inverse of the smallest nonzero eigenvalue of the elliptic operator, and the second factor of $S_h$ equals $1$ when $\upsilon$ is parallel to $\zeta$. If $S_h$ is bounded uniformly in $h$, the error converges with order $p$, since $\norm{\tau_A}$ is of order $h^p$ and $\norm{G}\norm{\tau_B}$ of order $h^{p}$ or higher. For the Dirichlet closure, $BE=0$ gives $E_{\mathrm B}=0$, the boundary rows of $A$ determine $E_{\mathrm G}$ through $A_{\mathrm{BG}}$, and the error solves $M\bar E_{\mathrm I}=\tau_{A,\mathrm I}-A_{\mathrm{IG}}A_{\mathrm{BG}}^{-1}\tau_{A,\mathrm B}$, where $\tau_{A,\mathrm I}$ and $\tau_{A,\mathrm B}$ are the interior and boundary rows of $\tau_A$; this is~\cite[Thm.~1]{schaback2017} specialized to the ghost-node closure, with an elimination factor of order one. Note that our relative errors divide by $\norm{\bar\Pi u}\sim\sqrt N$, which cancels the factor $\sqrt N$ between $\norm{\tau}$ and \eqref{eq:consistency}. Since $S_h$ cannot be bounded a priori, Section~\ref{sec:forward_results} reports the effective stability constant
\begin{equation}
 \hat S_h=\frac{\norm{\bar E}}{\norm{\tau_A}}\leq S_h\Bigl(1+\frac{\norm{G}\norm{\tau_B}}{\norm{\tau_A}}\Bigr),
 \label{eq:effective_stability}
\end{equation}
and the error converges with the order of $\tau_A$ as long as $\hat S_h$ stays bounded (Appendix~\ref{sec:app_estimates}).

\paragraph{Time-dependent problems}
The same elimination applies in every SBDF3 step, whose truncation error $\tau^n$ is $O(h^p+\Delta t^3)$ (Lemma~\ref{lem:parabolic_consistency} in Appendix~\ref{sec:proof_sbdf3}); the explicit terms $\mathcal{N}_h$ add the usual exponential factor of the discrete Gronwall inequality~\cite{clark1987short,akrivis1999}.

\begin{theorem}[error of the time-dependent problem]
\label{thm:parabolic}
Consider the closure with Neumann rows, as for our reaction--diffusion problem, and let $M$ and $G$ be formed from the implicit operator $A$ of \eqref{eq:semilinear}. Let $\tau^n=(\tau^n_A,\tau^n_B)$ be the truncation error of step $n$ (Appendix~\ref{sec:proof_sbdf3}). Suppose that the SBDF3 recursion for $y'=My$ with forcing, $(\tfrac{11}{6}I-\Delta t M)y^{n+1}=3y^n-\tfrac32y^{n-1}+\tfrac13y^{n-2}+\Delta t\,w^{n+1}$, is stable, i.e., $\norm{y^n}\leq C_S(\max_{j\leq2}\norm{y^j}+\Delta t\sum_{j=3}^{n}\norm{w^j})$ for $n\Delta t\leq T$, with a constant $C_S\geq1$ independent of $h$ and $\Delta t$. Suppose further that the explicit terms $\mathcal{N}_h(t,V)$ depend only on the physical values $\bar V$, with Lipschitz constant $L_{\mathcal N}$. Then the error $E^n=\Pi u(t_n)-U^n$ satisfies
\begin{equation}
 \max_{0\leq n\leq T/\Delta t}\norm{\bar E^n}\leq C_S\,e^{7C_SL_{\mathcal N}T}\Bigl(\max_{j\leq2}\norm{\bar E^j}+T\max_{3\leq n\leq T/\Delta t}\bigl(\norm{\tau^n_A}+\norm{G}\norm{\tau^n_B}\bigr)\Bigr).
 \label{eq:parabolic_bound}
\end{equation}
\end{theorem}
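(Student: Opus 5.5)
The plan is to write the error as a solution of the forced SBDF3 recursion for $y'=My$, as in the assumed stability hypothesis, and then close the argument with a discrete Gronwall inequality. First, I will fix the definition of the step truncation error from Lemma~\ref{lem:parabolic_consistency}: inserting $\Pi u(t_j)$ into \eqref{eq:sbdf3} leaves residuals. I normalize them so that
\begin{equation*}
 \tfrac{11}{6}\bar\Pi u(t_{n+1})-\Delta t\,A\Pi u(t_{n+1})=3\bar\Pi u(t_n)-\tfrac32\bar\Pi u(t_{n-1})+\tfrac13\bar\Pi u(t_{n-2})+\Delta t\bigl[3\mathcal{N}_h(t_n,\Pi u(t_n))-\dots\bigr]-\Delta t\,\tau^{n+1}_A
\end{equation*}
and $B\Pi u(t_{n+1})=g(t_{n+1})+\tau^{n+1}_B$. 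Whatever sign and scaling convention the appendix uses, only the norms $\norm{\tau^n_A}$ and $\norm{\tau^n_B}$ will matter. Subtracting \eqref{eq:sbdf3} then gives, for $E^{n+1}$, the same recursion with forcing $\Delta t[3\delta^n-3\delta^{n-1}+\delta^{n-2}]-\Delta t\,\tau^{n+1}_A$. Here $\delta^j=\mathcal{N}_h(t_j,\Pi u(t_j))-\mathcal{N}_h(t_j,U^j)$, and the boundary rows read $BE^{n+1}=\tau^{n+1}_B$.

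Second, I will eliminate the ghost errors exactly as in the derivation of \eqref{eq:error_equation}, but without a multiplier, since the time-dependent system is square and nonsingular. The boundary rows give $E^{n+1}_{\mathrm G}=B_{\mathrm G}^{-1}(\tau^{n+1}_B-B_{\mathrm P}\bar E^{n+1})$. Substituting this into $AE^{n+1}=A_{\mathrm P}\bar E^{n+1}+A_{\mathrm G}E^{n+1}_{\mathrm G}$ yields $AE^{n+1}=M\bar E^{n+1}+G\tau^{n+1}_B$, with $M$ and $G$ as in \eqref{eq:reduced_operator} and \eqref{eq:error_equation}. Hence $y^n=\bar E^n$ satisfies the forced SBDF3 recursion of the hypothesis with
\begin{equation*}
 w^{n+1}=3\delta^n-3\delta^{n-1}+\delta^{n-2}-\tau^{n+1}_A+G\tau^{n+1}_B .
\end{equation*}
Since $\mathcal{N}_h$ depends only on physical values and is $L_{\mathcal N}$-Lipschitz, $\norm{\delta^j}\leq L_{\mathcal N}\norm{\bar E^j}$. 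Therefore $\norm{w^{n+1}}\leq 7L_{\mathcal N}\max_{n-2\leq j\leq n}\norm{\bar E^j}+\Theta$, where the constant $7=3+3+1$ comes from the extrapolation coefficients and $\Theta=\max_{3\leq n\leq T/\Delta t}(\norm{\tau^n_A}+\norm{G}\norm{\tau^n_B})$.

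Third, I will set $e_n=\max_{k\leq n}\norm{\bar E^k}$ and apply the stability hypothesis at every $m\leq n$. This gives
\begin{equation*}
 e_n\leq C_S\bigl(e_2+T\Theta\bigr)+7C_SL_{\mathcal N}\,\Delta t\sum_{j=3}^{n}e_{j-1},
\end{equation*}
using $\Delta t\sum_{j=3}^n\Theta\leq T\Theta$. The sum on the right involves only earlier values $e_{j-1}$, because the explicit terms lag one step. The discrete Gronwall inequality in its explicit form then yields $e_n\leq C_S(e_2+T\Theta)(1+7C_SL_{\mathcal N}\Delta t)^{n}\leq C_S(e_2+T\Theta)\,e^{7C_SL_{\mathcal N}n\Delta t}$. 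With $n\Delta t\leq T$ this is \eqref{eq:parabolic_bound}. To make the maximum over $m\leq n$ legitimate, I will note that the bound on the right-hand side is monotone in $n$.

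The main obstacle I expect is bookkeeping rather than analysis. I must make sure that the truncation error of Lemma~\ref{lem:parabolic_consistency} is scaled so that it enters the forcing as $\Delta t\,\tau_A$, and not as $\tau_A$ itself; otherwise a factor $\Delta t^{-1}$ would appear in the bound. I must also check that the boundary rows, imposed only at the new time level, contribute exactly $G\tau^{n+1}_B$ at each step, with no accumulation from earlier levels. This holds because the ghost values are recomputed at every step and $\delta^j$ involves no ghost values. If the Gronwall step turns out to require the implicit form, I would first try to absorb the lag-zero term, but the one-step lag of the explicit terms should make that unnecessary.
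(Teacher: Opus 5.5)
Your proposal is correct and follows the paper's proof essentially step for step. The appendix defines the step truncation error through $\Delta t\,\tau^{n+1}$, as you assumed (with the opposite sign, which is harmless). The paper likewise eliminates the ghost errors through the boundary rows to get the forced recursion with $w^{n+1}$ equal to the extrapolated differences of $\mathcal{N}_h$, plus $\pm\tau^{n+1}_A$, plus $G\tau^{n+1}_B$, and finishes with the Lipschitz bound (factor $7$), the stability hypothesis, and the discrete Gronwall inequality. The only cosmetic difference is that you work with the running maximum $e_n$ and carry out the explicit Gronwall step yourself, whereas the paper bounds $\Delta t\sum_j\norm{w^j}$ by $7L_{\mathcal N}\Delta t\sum_{j=0}^{n-1}\norm{\bar E^j}$ plus the truncation term and cites the inequality.
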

\begin{proof}[Proof sketch]
Eliminating the ghost errors gives SBDF3 for $\bar E^\prime=M\bar E$, forced by the truncation error (its boundary part passed through $G$) and by differences of the explicit terms, at most $7L_{\mathcal N}$ times earlier errors ($7=3+3+1$, the sum of the absolute extrapolation coefficients). Stability and the discrete Gronwall inequality give \eqref{eq:parabolic_bound} (Appendix~\ref{sec:proof_parabolic}).
\end{proof}

The stability assumption holds, e.g., if $M$ is diagonalizable with eigenvector condition number bounded uniformly in $h$ and its spectrum lies in a sector~\cite{lubich1991resolvent} inside the $A(\alpha)$-stability region of BDF3, $\alpha\approx86^\circ$~\cite{hairer1996}; the spectral check of Section~\ref{sec:forward_linear_algebra} tests only a necessary condition. For our locally Lipschitz reaction terms, the theorem holds with the Lipschitz constant on a max-norm neighborhood of $\bar\Pi u$, which the discrete solution does not leave for small $h$ and $\Delta t$, since its max-norm error is of order $\sqrt N(h^p+\Delta t^3)$ and $p>d/2$. In contrast, the discretized advection depends on the ghost values with Lipschitz constant of order $h^{-1}$, so the theorem gives no uniform bound for advection--diffusion. However, for the Dirichlet closure an analogous bound without the exponential factor holds if the full recursion including advection is stable (with $C_S$ its stability constant), as the hyperviscosity is designed to ensure~\cite{fornberg2011stabilization,shankar2018hyperviscosity}.

\subsection{Inverse problems}
\label{sec:analysis_inverse}
For the inverse problems, the forward error is the defect. With exact observations $u_{\mathrm{obs}}=\bar\Pi u(\theta_\star)$, the residual \eqref{eq:inverse_residual} at the true parameters is the relative forward error, $r_h(\theta_\star)=-\bar E(\theta_\star)/\norm{u_{\mathrm{obs}}}$, where the subscript $h$ marks the dependence of $r$ and the recovered parameters $\theta_h$ on the cloud. Let $J_h=\partial r_h/\partial\theta$ and $\delta=\theta_h-\theta_\star$. Linearizing, $r_h(\theta_\star)\approx r_h(\theta_h)-J_h\delta$, and since $J_h^Tr_h(\theta_h)=0$ at a minimizer, applying the pseudoinverse $J_h^+$ gives
\begin{equation*}
 \delta\approx-J_h^{+}r_h(\theta_\star).
\end{equation*}
To first order, only the component of the forward error in the range of $J_h$ (i.e., of the sensitivities) enters, amplified by at most $1/\sigma_{\min}(J_h)$. Adding the linearization remainder makes this precise~\cite{wedin1973,bjorck1996}.

\begin{theorem}[parameter error, a posteriori]
\label{thm:inverse_aposteriori}
Let $\theta_h$ be a stationary point of the least-squares problem, $J_h(\theta_h)^Tr_h(\theta_h)=0$, at which the Jacobian has full rank, $\sigma_h=\sigma_{\min}(J_h(\theta_h))>0$, and let $L_J$ be a Lipschitz constant of $J_h$ on the segment between $\theta_\star$ and $\theta_h$. Then
\begin{equation}
 \norm{\delta}\leq\norm{J_h(\theta_h)^{+}r_h(\theta_\star)}+\frac{L_J}{2\sigma_h}\norm{\delta}^2\leq\frac{\norm{r_h(\theta_\star)}}{\sigma_h}+\frac{L_J}{2\sigma_h}\norm{\delta}^2.
 \label{eq:inverse_bound}
\end{equation}
\end{theorem}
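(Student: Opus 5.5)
The plan is a first-order Taylor expansion of $r_h$ about the stationary point $\theta_h$ with an integral remainder, followed by applying $J_h(\theta_h)^{+}$. We assume that $r_h$ is continuously differentiable on the segment between $\theta_\star$ and $\theta_h$. This holds for our forward maps, which are compositions of sparse products, factored solves, and the smooth explicit terms, wherever $\mathcal{K}$ is nonsingular. Write $J=J_h(\theta_h)$ and $\delta=\theta_h-\theta_\star$.

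By the fundamental theorem of calculus along $\theta(t)=\theta_h-t\delta$, $t\in[0,1]$,
\begin{equation*}
 r_h(\theta_\star)=r_h(\theta_h)-\int_0^1J_h(\theta_h-t\delta)\,\delta\,dt
 =r_h(\theta_h)-J\delta+R,\qquad R=-\int_0^1\bigl(J_h(\theta_h-t\delta)-J\bigr)\delta\,dt .
\end{equation*}
The Lipschitz bound on the segment gives $\norm{J_h(\theta_h-t\delta)-J}\leq L_Jt\norm{\delta}$. Integrating in $t$ yields $\norm{R}\leq\tfrac12L_J\norm{\delta}^2$.

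Next we apply $J^{+}$ to this identity. Since $J$ has full column rank ($\sigma_h>0$), we have $J^{+}=(J^TJ)^{-1}J^T$. Hence $J^{+}J=I$, and $J^{+}r_h(\theta_h)=(J^TJ)^{-1}J^Tr_h(\theta_h)=0$ by stationarity. Therefore
\begin{equation*}
 J^{+}r_h(\theta_\star)=-\delta+J^{+}R,\qquad\text{so}\qquad \delta=-J^{+}r_h(\theta_\star)+J^{+}R .
\end{equation*}
Since $\norm{J^{+}}=1/\sigma_h$, the triangle inequality gives the first inequality of \eqref{eq:inverse_bound}. The second inequality follows from $\norm{J^{+}r_h(\theta_\star)}\leq\norm{J^{+}}\norm{r_h(\theta_\star)}$.

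The only delicate step is the remainder estimate. It needs the Lipschitz constant to hold on the whole segment (which the statement assumes) and $r_h$ to be differentiable there. Everything else is linear algebra of the pseudoinverse.
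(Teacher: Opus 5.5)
Your proof is correct and is essentially the paper's argument: the integral form of the mean value theorem along the segment, the Lipschitz bound giving the factor $\tfrac12$, stationarity with $J^{+}=(J^TJ)^{-1}J^T$ to eliminate $J^{+}r_h(\theta_h)$, and $\|J^{+}\|=1/\sigma_h$. Your remainder is $R=(J-\bar J)\delta$ with the paper's mean Jacobian $\bar J=\int_0^1J_h(\theta_\star+t\delta)\,dt$, so the only differences are that you parametrize the segment from $\theta_h$ and bound the vector $R$ instead of the matrix difference $\|J-\bar J\|$.
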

\begin{proof}[Proof sketch]
With $\hat J=J_h(\theta_h)$ and the mean $\bar J$ of $J_h$ on the segment from $\theta_\star$ to $\theta_h$, $r_h(\theta_h)=r_h(\theta_\star)+\bar J\delta$, and the stationarity condition gives $\delta=-\hat J^+r_h(\theta_\star)+\hat J^+(\hat J-\bar J)\delta$, where $\norm{\hat J-\bar J}\leq\tfrac12L_J\norm{\delta}$ (Appendix~\ref{sec:proof_aposteriori}).
\end{proof}

In the spirit of a priori analyses of parameter identification with finite elements~\cite{rannacher2005}, our next theorem shows that the stationary point assumed in Theorem~\ref{thm:inverse_aposteriori} exists near $\theta_\star$ when the Jacobian at the truth has full rank and the forward error is small.

\begin{theorem}[parameter error, a priori]
\label{thm:inverse_apriori}
Suppose that the Jacobian at the true parameters has full rank, $\sigma_0=\sigma_{\min}(J_h(\theta_\star))>0$. Let $L_0$ be a Lipschitz constant of $J_h$ on the ball of radius $\sigma_0/(3L_0)$ about $\theta_\star$. If $\norm{r_h(\theta_\star)}<5\sigma_0^2/(36L_0)$, then the least-squares problem has a local minimizer in this ball, and every stationary point $\theta_h$ in the ball satisfies $\norm{\theta_h-\theta_\star}\leq2\norm{r_h(\theta_\star)}/\sigma_0$.
\end{theorem}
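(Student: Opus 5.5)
We locate a local minimizer of $F(\theta)=\tfrac12\norm{r_h(\theta)}^2$ in the closed ball $\bar B_\rho$ of radius $\rho=\sigma_0/(3L_0)$ about $\theta_\star$. Then we bound every stationary point in that ball by reusing the argument of Theorem~\ref{thm:inverse_aposteriori}, with $J_h(\theta_\star)$ in place of $J_h(\theta_h)$. Write $r_\star=r_h(\theta_\star)$ and $J_0=J_h(\theta_\star)$.

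The first step is to control the Jacobian on the ball. By Weyl's inequality for singular values, $\sigma_{\min}(J_h(\theta))\geq\sigma_0-L_0\norm{\theta-\theta_\star}\geq\tfrac23\sigma_0$ for $\theta\in\bar B_\rho$. Hence $J_h$ has full rank throughout the ball.

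The second step is existence. Since $\bar B_\rho$ is compact and $F$ is continuous, $F$ attains its minimum on $\bar B_\rho$. We must show the minimum is not attained on the sphere $\norm{\theta-\theta_\star}=\rho$, so that it is an interior, hence local, minimizer. For $\theta=\theta_\star+\delta$, Taylor's theorem with the Lipschitz bound gives
\[
r_h(\theta)=r_\star+J_0\delta+e,\qquad \norm{e}\leq\tfrac12L_0\norm{\delta}^2 .
\]
It follows that $\norm{r_h(\theta)}\geq\sigma_0\norm{\delta}-\tfrac12L_0\norm{\delta}^2-\norm{r_\star}$. At $\norm{\delta}=\rho$ this equals $\tfrac{5\sigma_0^2}{18L_0}-\norm{r_\star}$, because $\sigma_0\rho-\tfrac12L_0\rho^2=\tfrac{\sigma_0^2}{3L_0}-\tfrac{\sigma_0^2}{18L_0}=\tfrac{5\sigma_0^2}{18L_0}$.

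We need this to exceed $\norm{r_\star}=\norm{r_h(\theta_\star)}$, that is, $\norm{r_\star}<\tfrac{5\sigma_0^2}{36L_0}$. This is exactly the hypothesis, which confirms that the constant $5/36$ comes from this comparison. On the sphere we therefore have $\norm{r_h(\theta)}>\norm{r_\star}$, so the minimum over the ball lies in its interior. This interior minimizer is a local minimizer of the least-squares problem.

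The third step is the bound for every stationary point $\theta_h$ in the ball. From stationarity, $J_h(\theta_h)^Tr_h(\theta_h)=0$, and $r_h(\theta_h)=r_\star+\bar J\delta$ with $\bar J$ the mean Jacobian along the segment. Then $J_h(\theta_h)^T\bar J\,\delta=-J_h(\theta_h)^Tr_\star$. As in Theorem~\ref{thm:inverse_aposteriori}, write $\bar J=\hat J+(\bar J-\hat J)$ with $\norm{\hat J-\bar J}\leq\tfrac12L_0\norm{\delta}$ and $\sigma_{\min}(\hat J)\geq\tfrac23\sigma_0$. This gives
\[
\norm{\delta}\leq\frac{\norm{r_\star}}{\sigma_{\min}(\hat J)}+\frac{L_0\norm{\delta}^2}{2\sigma_{\min}(\hat J)}\leq\frac{3\norm{r_\star}}{2\sigma_0}+\frac{3L_0}{4\sigma_0}\norm{\delta}^2 .
\]
Using $\norm{\delta}\leq\rho=\sigma_0/(3L_0)$ in the quadratic term, that term is at most $\tfrac14\norm{\delta}$. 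Therefore $\tfrac34\norm{\delta}\leq\tfrac32\norm{r_\star}/\sigma_0$, which yields $\norm{\delta}\leq2\norm{r_\star}/\sigma_0$.

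The main obstacle is this last step, because the constants must close exactly. If the chosen splitting loses too much, a finer estimate is available: use $\sigma_{\min}(\hat J)\geq\sigma_0-L_0\norm{\delta}$ instead of the uniform $\tfrac23\sigma_0$, and solve the resulting quadratic inequality in $\norm{\delta}$. The existence step, by contrast, is a direct comparison that the hypothesis is tailored to satisfy.
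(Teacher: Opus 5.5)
Your proof is correct and follows the paper's argument. Existence comes from the same comparison on the sphere of radius $\sigma_0/(3L_0)$, which gives $\norm{r_h(\theta)}\geq 5\sigma_0^2/(18L_0)-\norm{r_h(\theta_\star)}>\norm{r_h(\theta_\star)}$, and the bound on stationary points comes from the a posteriori estimate \eqref{eq:inverse_bound} together with Weyl's inequality. The only difference is where you insert $\norm{\delta}\leq\sigma_0/(3L_0)$: you use the uniform bound $\sigma_{\min}(J_h(\theta_h))\geq\tfrac23\sigma_0$ first, whereas the paper keeps $\sigma_h\geq\sigma_0-L_0\norm{\delta}$ to obtain $\norm{\delta}(\sigma_0-\tfrac32L_0\norm{\delta})\leq\norm{r_h(\theta_\star)}$. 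Both routes close at exactly the factor $\sigma_0/2$, so your suggested fallback is unnecessary; it is in fact the paper's route.
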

\begin{proof}[Proof sketch]
On the boundary of the ball the residual exceeds $\norm{r_h(\theta_\star)}$, so the minimum of $\norm{r_h}$ over the ball is an interior stationary point. At any stationary point in the ball, $\sigma_h\geq\sigma_0-L_0\norm{\delta}$ by Weyl's inequality~\cite{horn2012matrix}, and \eqref{eq:inverse_bound} gives the bound (Appendix~\ref{sec:proof_apriori}).
\end{proof}

For these stationary points, the parameter error is therefore $O(\norm{r_h(\theta_\star)})$ if $\sigma_0$ is bounded away from zero and $L_0$ is bounded, uniformly in $h$. By Theorems~\ref{thm:elliptic} and~\ref{thm:parabolic}, this rate is $O(h^p)$ for the elliptic and reaction--diffusion problems if $S_h$ and $C_S$ are bounded uniformly; for transport, we rely on the measured forward errors. Since forward-mode differentiation discretizes the tangent equations in the same way, we expect $J_h(\theta_\star)$ to converge to the nodal values of the continuous sensitivities, and $\sigma_0$ to a positive limit if the continuous parameter-to-observation map is injective to first order. We do not prove stability of the tangent scheme; however, $\sigma_{\min}(J_h)$ changes little over our finest clouds (Section~\ref{sec:inverse_results}).

\section{Numerical results}
\label{sec:results}
We first describe the node sets, then the convergence of the forward problems and of the recovered parameters under refinement. All computations run on one NVIDIA RTX 5090 in double precision.
\subsection{Node generation}
\label{sec:nodegen_results}
We now present results for our node generator on two domains. The first is the H-channel
\begin{equation}
 \begin{aligned}\Omega_2={}&\bigl\{(x,y):\ 0<x<2,\ \bigl|y-\tfrac12\bigr|<\tfrac12+0.06\sin(\pi x/2)\bigr\}\\&\setminus\bigl([0,0.9]\cup[1.1,2]\bigr)\times[0.4,0.6],\end{aligned}
\end{equation}
which consists of two channels with curved outer walls joined by a slot of width $0.2$. Its boundary consists of two curved and ten straight pieces and has four reentrant corners at the ends of the slot. The second, the treasure chest, is a box with a superelliptic cap,
\begin{equation}
 \Omega_3=\Omega_{\mathrm{box}}\cup\Bigl\{(x,y,z):\ x\geq1.2,\ \Bigl(\frac{x-1.2}{0.6}\Bigr)^2+\Bigl|\frac{y}{0.6}\Bigr|^4+\Bigl|\frac{z}{0.35}\Bigr|^4<1\Bigr\},
 \label{eq:treasure_chest}
\end{equation}
with the box $\Omega_{\mathrm{box}}=(0,1.2)\times(-0.6,0.6)\times(-0.35,0.35)$. Its boundary consists of five faces of the box, four planar pieces in the plane $x=1.2$ around the base of the cap, and the cap. The cap is given to the node generator only as seed nodes with normals, together with its rim. For each $h\in\{0.16,0.12,0.08,0.06,0.04\}$ we generate one cloud with the parameters of Table~\ref{tab:node_gen_parameters} in Appendix~\ref{sec:app_parameters}, giving $N=141$ to $1160$ physical nodes in the H-channel and $N=805$ to $25636$ in the treasure chest. Figure~\ref{fig:node_clouds} shows examples, including the refined boundary layer and the ghost nodes at the slot corners. On every cloud, all interior nodes lie inside the exact domain and all ghost nodes outside it. Distinct nodes are at least $h/2$ apart, except in the H-channel at $h=0.16$, where the walls between the two channels are $0.2=1.25h$ thick and the ghost nodes of their upper and lower faces are $h/4$ apart. Figure~\ref{fig:timings} of Section~\ref{sec:forward_results} gives the generation times.

\begin{figure}[tbp]
\centering
\includegraphics[width=0.34\textwidth]{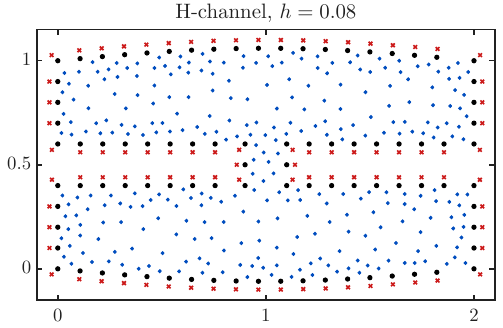}\hspace{3mm}
\includegraphics[width=0.21\textwidth]{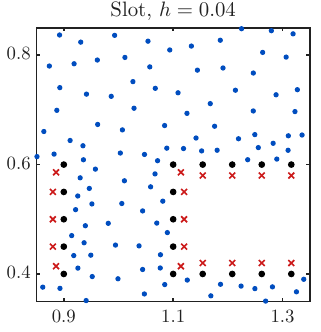}

\medskip
\includegraphics[width=0.27\textwidth]{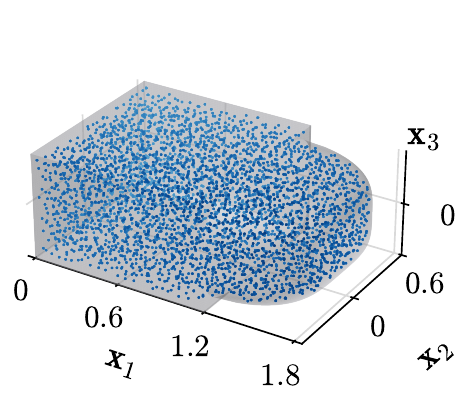}\hspace{3mm}
\includegraphics[width=0.27\textwidth]{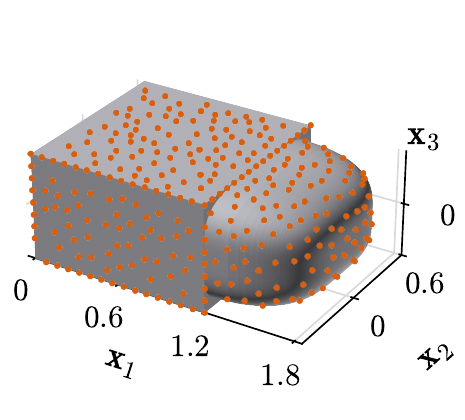}
\caption{Node clouds. Top: interior nodes (blue), boundary nodes (black), and ghost nodes (red crosses) of the H-channel at $h=0.08$ (left) and of its slot at $h=0.04$ (right). Bottom: the treasure chest at $h=0.08$, with the interior nodes in front of the translucent boundary (left), shaded darker with distance from the viewer, and the boundary nodes on the visible part of the boundary (right).}
\label{fig:node_clouds}
\end{figure}

\begin{figure}[tbp]
\centering
\begin{tabular}{@{}c@{\hspace{1mm}}c@{\hspace{1mm}}c@{}}
\includegraphics[width=0.325\textwidth]{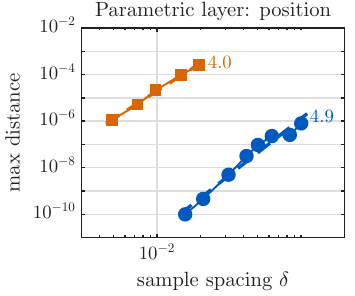} & \includegraphics[width=0.325\textwidth]{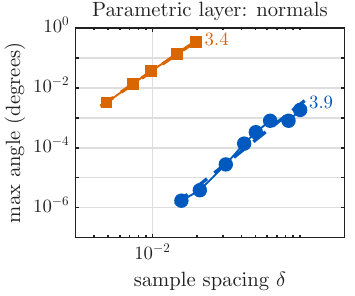} & \includegraphics[width=0.325\textwidth]{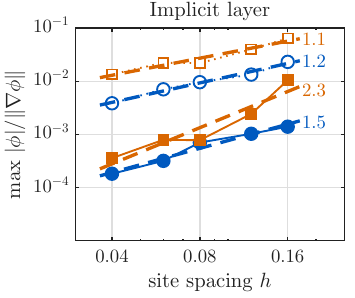}\\
\multicolumn{3}{c}{\includegraphics{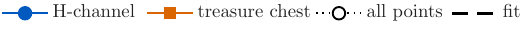}}
\end{tabular}
\caption{Errors of the geometric layers against the exact boundaries. Left and middle: largest distance and normal angle of the parametric layer on the curved pieces (the walls of the H-channel and the cap of the treasure chest, with normals measured at least $0.02$ from the rim of the cap) against the spacing $\delta$ of the seed nodes. Right: largest distance $|\phi|/\norm{\nabla\phi}$ of the implicit layer at exact boundary points against the spacing $h$ of its sites, at least $2h$ from edges and corners (filled markers) and over all points (open markers). The dashed lines are least-squares fits, labeled with their slopes.}
\label{fig:geometry}
\end{figure}

\paragraph{Geometric layers}
Figure~\ref{fig:geometry} shows how the two layers of Section~\ref{sec:node_gen} converge to the exact boundary. The errors are measured at $2\times10^4$ random points per curved piece, independent of the input data. The parametric layer is fit to the seed nodes, so its error depends on their spacing $\delta$, and on $h$ only through refinements of the cover, which refit the cap of the treasure chest on smaller supports from the same seed nodes and thus increase its distance error (Appendix~\ref{sec:app_cover_refinement}). The error in the parametric layer decreases roughly like $\delta^{4.9}$ on the walls of the H-channel and like $\delta^{4.0}$ on the cap; its normals converge like $\delta^{3.9}$ and $\delta^{3.4}$. Our clouds use $65$ seed nodes per wall ($\delta=0.031$) and $41$ seed nodes per parameter direction on the cap ($\delta=0.0073$).

In contrast, the implicit layer is fit to boundary sites at spacing $h$, and away from edges and corners its distance error decreases like $h^{1.5}$ in the H-channel and $h^{2.3}$ in the treasure chest. At edges and corners, however, $\phi$ rounds the crease over a width proportional to $h$, and the largest error over all points decreases only like $h$. Note that this rounding does not move the boundary nodes, which lie on the parametric layer; it affects only points within a fraction of $h$ of an edge, where the clearance condition~\eqref{eq:interior_spacing} excludes interior nodes anyway.

\paragraph{Parallel algorithms}
Finally, we test the two parallel building blocks of our generator in isolation. On $n=10^3$ to $10^6$ uniformly random candidates in the unit square and the unit cube, with the common radius $r=(0.6/n)^{1/d}$, the parallel greedy independent set of Section~\ref{sec:node_gen_mis} equals the sequential greedy set after $5$ to $9$ rounds, a number that grows roughly like $\log n$. Grouped evaluation of the implicit layer (Section~\ref{sec:node_gen_grouping}) at the interior nodes costs $1.3$ to $1.4$ times the work of an exact sum over the balls containing each point, compared with $1.6$ to $3.0$ times with padding to the largest overlap count.

\subsection{Forward problems}
\label{sec:forward_results}

\subsubsection{Test problems}
\label{sec:forward_tests}
We now test our forward solvers on three problems on the clouds of Section~\ref{sec:nodegen_results}, with orders $p\in\{2,4,6\}$. Each problem has a manufactured solution $u_\star$ that is smooth on all of $\R^d$ (in order for us to be able to test high-order convergence rates). The forcing $f$ and the boundary data $g$ are computed from $u_\star$ and the true parameters $\theta_\star$, with the normals $n_b$ of the boundary nodes, so that the difference between the discrete solution and $u_\star$ is the discretization error, which should decrease at order $p$ (Section~\ref{sec:analysis}).

\paragraph{Darcy flow}
The pressure $u$ solves
\begin{equation}
 -\nabla\cdot\bigl(K\nabla u\bigr)=f\ \ \text{in }\Omega,\qquad K\,\partial_nu=g\ \ \text{on }\partial\Omega,
 \label{eq:darcy_k}
\end{equation}
with the permeability
\begin{equation}
 K(x;\theta)=\exp\bigl(F(x)\cdot\theta\bigr),
 \label{eq:permeability}
\end{equation}
where $F(x)$ is a vector of features and $\theta$ its coefficients, for the first five families of Table~\ref{tab:families}. Since the flux condition holds on all of $\partial\Omega$, $u$ is determined only up to a constant, which we fix by matching the mean of $u_\star$ over the physical nodes (Section~\ref{sec:forward_ghosts}). The manufactured solution is
\begin{equation}
 u_\star(x)=\begin{cases}
 \sin(1.1x-0.8y)+0.2x^2+0.1xy-0.05y^2, & d=2,\\
 \sin(0.9x-0.7y+0.5z)+0.12x^2-0.08yz+0.07z^2, & d=3.
 \end{cases}
 \label{eq:ustar}
\end{equation}

\begin{table}[tbp]
\centering
\caption{Parameter families of the permeability $K$ and of the reaction law $R$: feature vectors $F(x)$ and true parameters $\theta_\star$. The cubic law is used only for $R$.}
\label{tab:families}
\small
\begin{tabular}{@{}llll@{}}
\toprule
Family & $d$ & $F(x)$ & $\theta_\star$\\
\midrule
constant & 2 & $(1)$ & $(0.12)$\\
 & 3 & $(1)$ & $(0.09)$\\\addlinespace
affine & 2 & $(1,x,y)$ & $(0.08,0.22,-0.16)$\\
 & 3 & $(1,x,y,z)$ & $(0.06,0.18,-0.11,0.09)$\\\addlinespace
quadratic & 2 & $(1,x,y,x^2,y^2)$ & $(0.05,0.18,-0.12,0.08,-0.04)$\\
 & 3 & $(1,x,y,z,x^2,z^2)$ & $(0.04,0.15,-0.08,0.07,0.09,-0.05)$\\\addlinespace
cross term & 2 & $(1,x,y,xy)$ & $(0.10,0.30,-0.22,0.12)$\\
 & 3 & $(1,x,y,z,xz)$ & $(0.07,0.20,-0.15,0.10,0.14)$\\\addlinespace
sine product & 2 & $(1,\sin\pi x\sin\pi y)$ & $(0.08,0.22)$\\
 & 3 & $(1,\sin\pi x\sin\pi y\sin\pi z)$ & $(0.06,0.20)$\\\addlinespace
cubic & 2, 3 & $R(u;\theta)=\theta_1u+\theta_2u^3$ & $(0.16789,0.02706)$\\
\bottomrule
\end{tabular}
\end{table}

\paragraph{Reaction--diffusion}
The concentration $u$ solves
\begin{equation}
 u_t=\nu\Delta u+R(u,x;\theta)+f\ \ \text{in }\Omega\times(0,T],\qquad \partial_nu=g\ \ \text{on }\partial\Omega,
 \label{eq:rd}
\end{equation}
with $T=\tfrac12$, $\nu=0.08$ for $d=2$ and $\nu=0.06$ for $d=3$. The reaction law is either quadratic in $u$ with a spatially varying rate,
\begin{equation}
 R(u,x;\theta)=\exp\bigl(F(x)\cdot\theta\bigr)\,u^2,
\end{equation}
with the first five families of Table~\ref{tab:families}, or the cubic law $R(u;\theta)=\theta_1u+\theta_2u^3$, the sixth family of Table~\ref{tab:families}. The manufactured solution is $u_\star(x,t)=e^{-0.35t}\,u_0(x)$, with $u_0$ given by \eqref{eq:ustar}.

\begin{figure}[tbp]
\centering
\begin{tabular}{@{}c@{\hspace{1mm}}c@{}}
\textbf{H-channel} & \textbf{Treasure chest}\\
\includegraphics[scale=0.8]{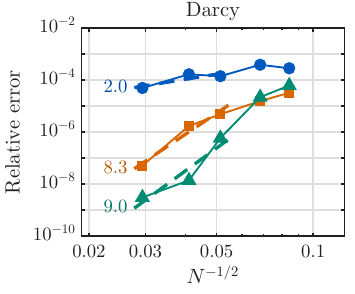} & \includegraphics[scale=0.8]{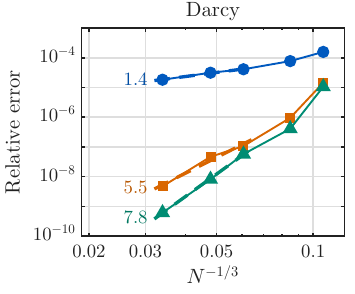}\\
\includegraphics[scale=0.8]{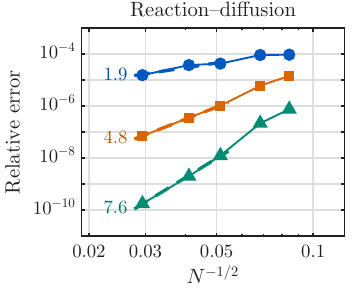} & \includegraphics[scale=0.8]{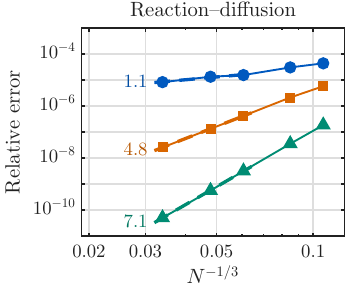}\\
\includegraphics[scale=0.8]{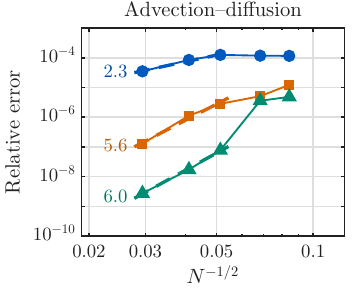} & \includegraphics[scale=0.8]{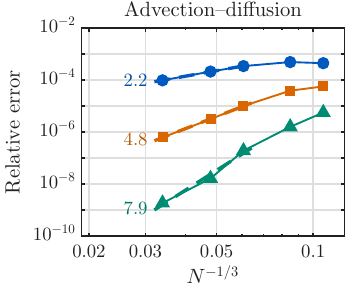}\\
\multicolumn{2}{c}{\includegraphics[scale=0.8]{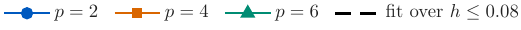}}
\end{tabular}
\caption{Relative errors of the forward problems against $N^{-1/d}$, where $N$ is the number of physical nodes. Each curve is the largest error over the parameter families. The dashed lines are least-squares fits over the three finest clouds, labeled with their slopes.}
\label{fig:forward_convergence}
\end{figure}

\begin{figure}[tbp]
\centering
\begin{tabular}{@{}c@{\hspace{2mm}}c@{}}
\small H-channel & \small Treasure chest, section $y=0$\\
\includegraphics[width=0.40\textwidth]{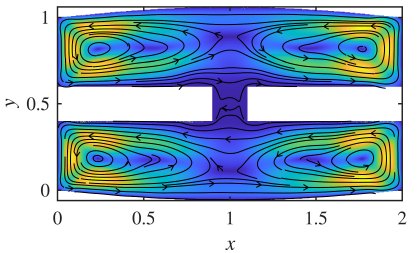} & \includegraphics[width=0.40\textwidth]{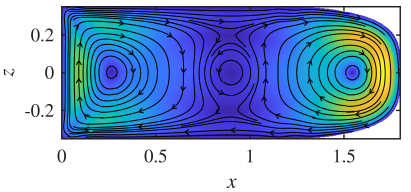}\\
\multicolumn{2}{c}{\includegraphics[width=0.32\textwidth]{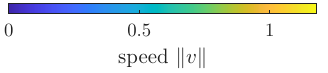}}
\end{tabular}
\caption{The advection--diffusion velocity $1.18\,w$ of \eqref{eq:velocity_field} at $t=T$, colored by speed, with streamlines. In the treasure chest the panel shows the section $y=0$ and the components in it.}
\label{fig:velocity_ad}
\end{figure}

\paragraph{Advection--diffusion}
The concentration $u$ solves
\begin{equation}
 u_t+v\cdot\nabla u=\nu\Delta u+f\ \ \text{in }\Omega\times(0,T],\qquad u=g\ \ \text{on }\partial\Omega,
 \label{eq:ad}
\end{equation}
with $T=\tfrac12$ and $\nu=0.05$. The velocity $v(x,t)=1.18\sin(\pi t)\,w(x)$ grows from zero at $t=0$ to its full strength at $t=T$. The field $w$ is the normalized curl of a stream function ($d=2$) or a vector potential ($d=3$) that vanishes quadratically on the boundary,
\begin{equation}
 w=\frac{\tilde w}{\max_{x\in X_0}\norm{\tilde w(x)}},\qquad
 \tilde w=\begin{cases}(\partial_y\Psi,\,-\partial_x\Psi), & d=2,\\ \nabla\times(\Psi e_y), & d=3,\end{cases}\qquad \Psi=\phi_{0,+}^2\,m_0 .
 \label{eq:velocity_field}
\end{equation}
Here $\phi_{0,+}=\max(\phi_0,0)$ is the positive part of the implicit layer $\phi_0$ of the cloud with $h=0.16$, $m_0$ is a quadratic about the center of the domain, and $X_0$ is a fixed uniform grid in $\Omega$ (Appendix~\ref{sec:app_velocity}). The field $w$, shown in Figure~\ref{fig:velocity_ad}, is divergence-free, has maximum speed one, vanishes on the zero set of $\phi_0$ (an approximation of $\partial\Omega$), and is the same on every cloud. The manufactured solution is
\begin{equation}
 u_\star(x,t)=1+0.15\cos(0.8t)\prod_{a=1}^{d}\cos(\pi x_a).
 \label{eq:ustar_ad}
\end{equation}
With a P\'eclet number of about $24$ (based on the maximum speed and unit length), the problem is moderately advection dominated, and we add the hyperviscosity of Section~\ref{sec:forward_hyperviscosity}.

\begin{figure}[tbp]
\centering
\begin{tabular}{@{}c@{\hspace{1mm}}c@{}}
\includegraphics[scale=0.8]{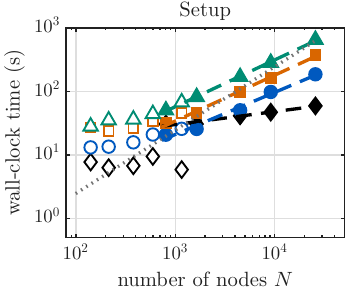} & \includegraphics[scale=0.8]{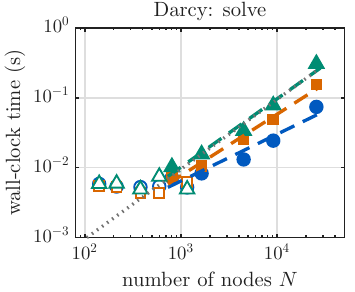}\\
\includegraphics[scale=0.8]{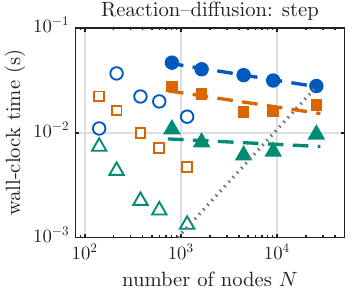} & \includegraphics[scale=0.8]{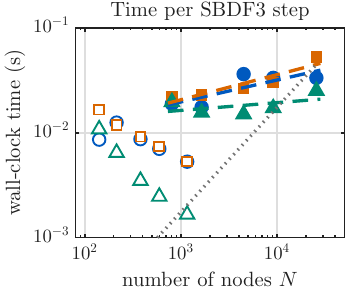}\\
\multicolumn{2}{c}{\includegraphics[scale=0.8]{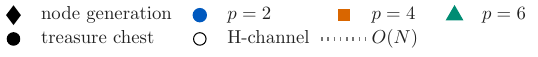}}
\end{tabular}
\caption{Wall-clock times of the forward problems on one NVIDIA RTX 5090 against the number $N$ of physical nodes (filled markers: treasure chest; open markers: H-channel). Top left: node generation, and the operator setup for one cloud and order (stencils, weights, assembly, factorization). Top right: one Darcy solve. Bottom: one SBDF3 step of reaction--diffusion and of advection--diffusion. Dashed lines: least-squares fits $CN^a$ for the treasure chest; dotted lines: $O(N)$. All times include just-in-time compilation; those of the solves and steps are medians over the parameter families.}
\label{fig:timings}
\end{figure}

\paragraph{Time stepping and error}
The time-dependent problems are integrated by SBDF3, which treats $\nu\Delta$ and the hyperviscosity implicitly and $R$, $v\cdot\nabla u$, and $f$ explicitly, with starting values at $t_0$, $t_1$, and $t_2$ taken from $u_\star$. The constants of the step rule \eqref{eq:dt_rule} are $c_1=c_2=\tfrac14$ for reaction--diffusion and $c_1=\tfrac1{16}$, $c_2=\tfrac14$ for advection--diffusion. For advection--diffusion with $p=4$, $c_2h^{4/3}<c_1h$ only for $h<1/64$, so all our clouds use $\Delta t=h/16$ and the time error is formally $O(h^3)$. However, such an error would decrease with a slope of only about $3.6$ against $N^{-1/d}$, below the observed slopes; thus, the spatial error dominates on our clouds. We report the relative error over the physical nodes at $t=T$,
\begin{equation}
 \mathrm{err}=\frac{\norm{\bar U-u_\star|_{\chii\cup\chib}}_2}{\norm{u_\star|_{\chii\cup\chib}}_2},
\end{equation}
and the least-squares slope of $\log\mathrm{err}$ against $\log N^{-1/d}$ over the three finest clouds, $h\leq0.08$.

\subsubsection{Results}
\paragraph{Convergence}
Figure~\ref{fig:forward_convergence} shows the forward errors (slopes of the individual families in Table~\ref{tab:forward_slopes} in Appendix~\ref{sec:app_slopes}). We expect the errors to decrease as $O(h^p)$. For $p=4$ the slopes against $N^{-1/d}$ lie between $4.5$ and $8.3$, and for $p=6$ between $6.0$ and $9.0$. These slopes exceed $p$ mainly because the boundary layer makes $N^{-1/d}$ decrease more slowly than $h$. From $h=0.08$ to $h=0.04$, $N^{-1/d}/h$ grows from $0.64$ to $0.73$ in the H-channel and from $0.76$ to $0.85$ in the treasure chest; an error of $O(h^p)$ thus has a slope of about $1.2p$ against $N^{-1/d}$. Against $h$, most problems converge at $O(h^p)$ or faster; the exceptions are advection--diffusion in the H-channel with $p=6$ (roughly $h^{4.9}$) and reaction--diffusion in the treasure chest with $p=4$ (roughly $h^{3.8}$). For $p=2$ the slopes are close to two, except for Darcy and reaction--diffusion in the treasure chest, where they are only $1.1$ to $1.6$.

Recall that Theorems~\ref{thm:elliptic} and~\ref{thm:parabolic} predict these rates when the stability constants stay bounded; Table~\ref{tab:forward_estimate} in Appendix~\ref{sec:app_estimates} lists both factors of the Darcy bound. The truncation error converges at least at $O(h^p)$, and the effective stability constant $\hat S_h$ of \eqref{eq:effective_stability} shows no systematic growth in $h$, except in the treasure chest with $p=2$, where it grows by a factor of $1.6$ from $h=0.08$ to $h=0.04$ and lowers the slope to $1.4$. Reaction--diffusion shows the same reduction in this case (Table~\ref{tab:forward_slopes}). In addition, advection--diffusion, which Theorem~\ref{thm:parabolic} does not cover, converges at the same rates.

\paragraph{Stencil selection}
We compare our stencil selection (step~\ref{alg:fw_stencil} of Algorithm~\ref{alg:forward}) with the obvious alternative, the $n_s$ Euclidean nearest nodes, on the same clouds. At $h=0.04$, $3\%$, $9\%$, and $26\%$ of the Euclidean stencils in the H-channel contain a node hidden behind a wall, for $p=2$, $4$, and $6$; in the treasure chest the fractions are $0.8\%$, $2.2\%$, and $4.0\%$, near the rim of the cap. With Euclidean stencils, the reduced operators of all three problems in the H-channel at $h=0.08$ with $p=2$ have an eigenvalue with positive real part, and so does the advection--diffusion generator at $h=0.16$ with $p=6$. These cases therefore fail the stability check (step~\ref{alg:fw_check} of Algorithm~\ref{alg:forward}), which every problem passes with our stencils (Section~\ref{sec:forward_stencils}). Where both pass, the errors in the treasure chest are nearly the same. On the coarse H-channel clouds the Euclidean errors are smaller, likely because the manufactured solutions continue smoothly across the walls, so that values on the other side remain accurate data; on the finest cloud with $p=6$, however, they are larger. Note that for a solution that is unrelated across a wall (e.g., in a channel with different inflows), Euclidean stencils would be inconsistent.

\paragraph{Cost}
In Figure~\ref{fig:timings}, the time of node generation grows only like $N^{0.2}$. In the treasure chest, most of it is spent in the fit of the implicit layer (steps~\ref{alg:ng_cover}--\ref{alg:ng_band} of Algorithm~\ref{alg:node_gen}), whose size is set by the boundary data more than by $h$. The setup of the operators (steps~\ref{alg:fw_stencil}--\ref{alg:fw_factor} of Algorithm~\ref{alg:forward}) grows like $N^{0.66}$ to $N^{0.74}$ and is the most expensive stage of our pipeline. A Darcy solve costs a numerical refactorization and one substitution; its time grows like $N^{0.7}$ to $N^{1.0}$. The time of an SBDF3 step (step~\ref{alg:fw_sbdf}), one forward and one backward substitution with the cuDSS factors and a few sparse products, grows weakly with $N$, likely because fixed costs per step (e.g., launching the GPU kernels) dominate on these clouds.


\subsection{Inverse problems}
\label{sec:inverse_results}

\subsubsection{Test problems}
\label{sec:inverse_tests}
We now test our inverse solver on the clouds, orders, and time steps of Section~\ref{sec:forward_tests}. For the time-dependent inverse tests, every forward solve in the iteration starts from the exact states at $t=0$, $\Delta t$, and $2\Delta t$. In each problem, $\theta$ is recovered from noise-free observations of the manufactured solution at the physical nodes, $u_{\mathrm{obs}}=u_\star|_{\chii\cup\chib}$ (at $t=T$ for the time-dependent problems). Thus, the residual $r(\theta_\star)$ of \eqref{eq:inverse_residual} is the relative forward error, and the error of the recovered parameters $\theta_h$ shows how the discretization error propagates to the parameters.

\paragraph{Darcy flow}
Here we recover the coefficients $\theta$ of the permeability \eqref{eq:permeability} in \eqref{eq:darcy_k}, for the first five families of Table~\ref{tab:families}, from the pressure $u$. The forcing $f$ and the flux $g$ are fixed at their true values; note that $\theta$ then also enters the boundary rows (Section~\ref{sec:inverse_formulation}). The manufactured solution is \eqref{eq:ustar}.

\paragraph{Reaction--diffusion}
For this problem we recover the coefficients of the reaction law in \eqref{eq:rd}, for the six families of Table~\ref{tab:families}, from $u(\cdot,T)$. Again, $f$ and the Neumann data $g$ are fixed at their true values, and the manufactured solution is $u_\star(x,t)=e^{-0.35t}u_0(x)$.

\begin{figure}[tbp]
\centering
\begin{tabular}{@{}c@{\hspace{1mm}}c@{\hspace{1mm}}c@{\hspace{1mm}}c@{}}
\multicolumn{2}{c}{\footnotesize Smooth modes} & \multicolumn{2}{c}{\footnotesize $C^1$ modes}\\
\includegraphics[width=0.24\textwidth]{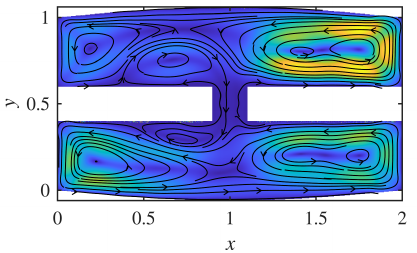} & \includegraphics[width=0.24\textwidth]{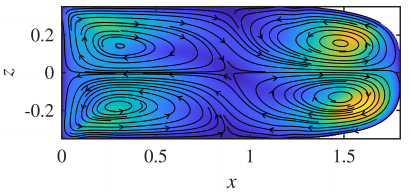} & \includegraphics[width=0.24\textwidth]{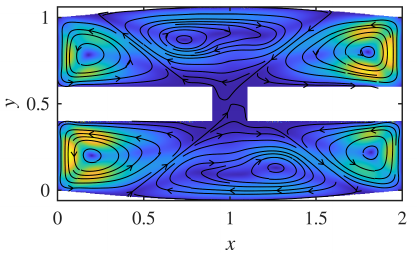} & \includegraphics[width=0.24\textwidth]{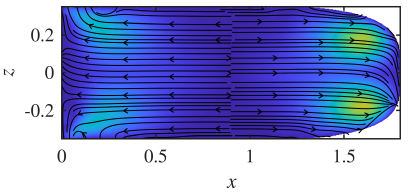}\\
\multicolumn{4}{c}{\includegraphics[width=0.28\textwidth]{Images/plots/legend_velocity}}
\end{tabular}
\caption{The true transport velocities $v(x;\theta_\star)$, colored by speed, with streamlines, for the smooth (left pair) and the $C^1$ (right pair) set of modes, each in the H-channel and the treasure chest. In the treasure chest the panels show the section $y=0$ and the components in it; the speed includes the component normal to the section.}
\label{fig:velocity_transport}
\end{figure}

\paragraph{Transport}
In the transport problem, the unknowns are the coefficients of a steady velocity
\begin{equation}
 v(x;\theta)=\sum_{i=1}^{3}\theta_iw_i(x)
 \label{eq:transport_velocity}
\end{equation}
and the observations are $u(\cdot,T)$, where $u$ solves \eqref{eq:ad} with $v=v(x;\theta)$, $\nu=0.05$, and $T=\tfrac12$. The basis fields $w_i$ are built as in \eqref{eq:velocity_field}, with $m_0$ replaced by a mode $m_i$ and, for $d=3$, $e_y$ by $e_i$. Two sets of three modes are used, one smooth and one whose fields are only $C^1$ across the coordinate planes through the center of the domain (Table~\ref{tab:velocity_modes} in Appendix~\ref{sec:app_velocity}). The true coefficients are multiples of $(1.45,-0.90,0.60)$ for $d=2$ and $(1.20,-0.80,0.50)$ for $d=3$, scaled to a maximum speed of $1.18$ as in the forward problem (Figure~\ref{fig:velocity_transport}). The manufactured solution is \eqref{eq:ustar_ad}, and $f$ and the Dirichlet data $g$ are computed with $v(x;\theta_\star)$ and fixed, as is the hyperviscosity (Section~\ref{sec:inverse_formulation}).

\begin{figure}[tbp]
\centering
\begin{tabular}{@{}c@{\hspace{1mm}}c@{}}
\textbf{H-channel} & \textbf{Treasure chest}\\
\includegraphics[scale=0.8]{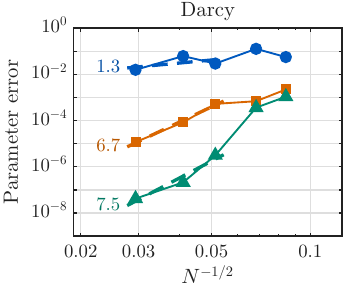} & \includegraphics[scale=0.8]{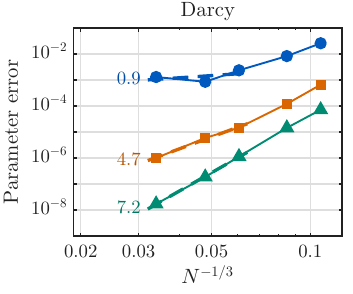}\\
\includegraphics[scale=0.8]{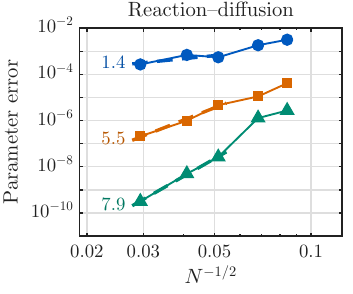} & \includegraphics[scale=0.8]{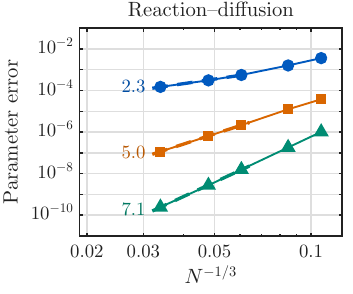}\\
\includegraphics[scale=0.8]{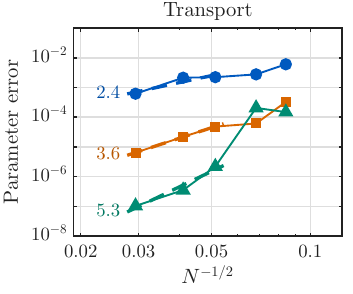} & \includegraphics[scale=0.8]{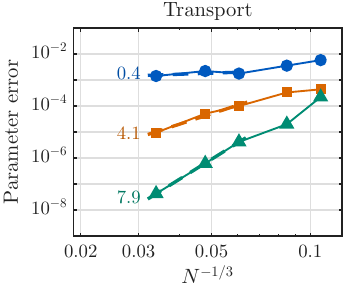}\\
\multicolumn{2}{c}{\includegraphics[scale=0.8]{Images/plots/legend_convergence}}
\end{tabular}
\caption{Relative errors of the recovered parameters against $N^{-1/d}$. Each curve is the largest error over the parameter families (for transport, over the two velocity bases). The dashed lines are least-squares fits over the three finest clouds, labeled with their slopes.}
\label{fig:inverse_convergence}
\end{figure}

\subsubsection{Results}
We report the relative parameter error $\norm{\theta_h-\theta_\star}/\norm{\theta_\star}$. The iteration starts from $\theta=0$ on the coarsest cloud and from the parameters of the next coarser cloud on the others (Section~\ref{sec:inverse_optimization}). From $\theta=0$, Levenberg--Marquardt typically needs a few tens of iterations; from the warm starts, Gauss--Newton (step~\ref{alg:inv_gn} of Algorithm~\ref{alg:inverse}) needs at most seven (Appendix~\ref{sec:app_iterations}).

\begin{figure}[tbp]
\centering
\begin{tabular}{@{}c@{\hspace{1mm}}c@{\hspace{1mm}}c@{}}
\includegraphics[width=0.325\textwidth]{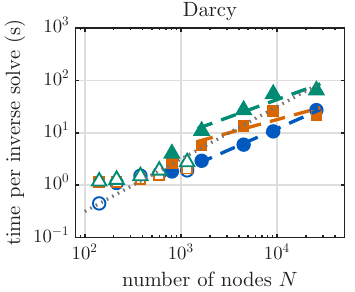} & \includegraphics[width=0.325\textwidth]{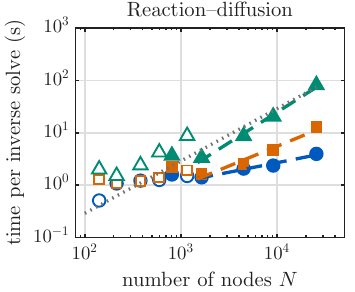} & \includegraphics[width=0.325\textwidth]{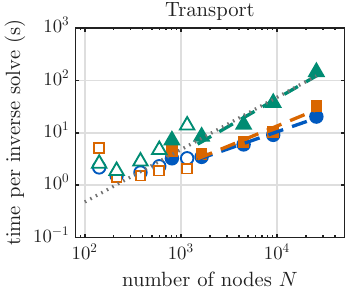}\\
\multicolumn{3}{c}{\includegraphics[scale=0.8]{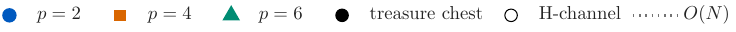}}
\end{tabular}
\caption{Wall-clock time of one inverse solve against the number $N$ of physical nodes, median over the parameter families (filled markers: treasure chest; open markers: H-channel). The iteration on the coarsest cloud starts from $\theta=0$, the others from the parameters of the next coarser cloud. Dashed lines: least-squares fits $CN^a$ for the treasure chest over the warm-started inverse solves; dotted lines: $O(N)$. The setup of the operators is shared with the forward problems (Figure~\ref{fig:timings}).}
\label{fig:inverse_timings}
\end{figure}

\paragraph{Convergence}
Figure~\ref{fig:inverse_convergence} shows the parameter errors (slopes of the individual families in Table~\ref{tab:inverse_slopes} in Appendix~\ref{sec:app_slopes}). Since $r(\theta_\star)$ is the forward error, we expect the parameter errors to follow the forward errors of Figure~\ref{fig:forward_convergence}. For $p=6$ this holds roughly, with slopes between $6.5$ and $9.3$ in the treasure chest and between $5.2$ and $11.0$ in the H-channel. For $p\leq4$, however, the slopes scatter widely, and some families fall below $p$, namely transport with $p=4$ in the H-channel ($2.2$ to $3.3$), Darcy with $p=4$ in the treasure chest (down to $2.8$), and Darcy and reaction--diffusion with $p=2$ in the treasure chest (down to $0.9$ and $1.0$). Transport with $p=2$ in the treasure chest is the one exception to the trend; its parameter error hardly decreases over the three finest clouds ($0.4$ to $0.9$), although the forward error does.

Theorem~\ref{thm:inverse_aposteriori} explains both the agreement with the forward errors and the scatter for $p\leq4$. To first order, the parameter error is $-J^+r(\theta_\star)$, the component of the forward error in the range of $J$, amplified by at most $1/\sigma_{\min}(J)$. This amplification hardly changes with the cloud; for each problem, order, and family, $\sigma_{\min}(J)$ varies by at most $19\%$ over the three finest clouds. However, the component in the range of $J$ changes from cloud to cloud and need not decrease with the forward error as a whole. Cloud-to-cloud variation produces scatter when it is comparable to the decrease in the total forward error. For $p=2$, that decrease is only about a factor of three over the three finest clouds. The stalled parameter error for transport with $p=2$ in the treasure chest is likely due to this variation dominating the trend. Finally, the efficiency index
\begin{equation}
 \eta=\frac{\norm{\theta_h-\theta_\star}\,\sigma_{\min}\bigl(J(\theta_h)\bigr)}{\norm{r(\theta_\star)}},
\end{equation}
which the theorem predicts to be at most one to first order, is below one in every inverse solve and typically well below it (Table~\ref{tab:inverse_efficiency} in Appendix~\ref{sec:app_estimates}).

\paragraph{Cost}
Each iteration evaluates the residual and the Jacobian in one forward-mode pass (step~\ref{alg:inv_residual} of Algorithm~\ref{alg:inverse}), in which the $n_\theta\leq6$ tangent directions are solved together with the cuDSS factors. For the time-dependent problems, an iteration thus costs a small multiple of one time integration, and since the warm starts need few iterations, an inverse solve costs a few forward solves (Figure~\ref{fig:inverse_timings}). Darcy flow is the exception, since a Darcy forward solve is cheap and fixed costs (e.g., the compilation of the differentiated forward map) make a Darcy inverse solve much longer than a few forward solves (Table~\ref{tab:end_to_end}). The longest inverse solves are those of the time-dependent problems with $p=6$, which take the most time steps. In the treasure chest, the time of an inverse solve grows roughly like $N^{0.4}$ to $N^{1.2}$.

\subsection{End-to-end cost}
\label{sec:end_to_end}
We now consider the cost of the whole computation, from the seed nodes to the recovered parameters. Table~\ref{tab:end_to_end} lists the time of every stage of Algorithms~\ref{alg:node_gen}--\ref{alg:inverse} on the finest clouds. The setup of the operators dominates; stencil selection, weights, assembly, the stability check, and the factorization grow with the stencil size (and hence with $p$) and together cost more than all other stages. However, the setup is done once per cloud and order; further parameter families, observations, or right-hand sides on the same cloud cost only forward and inverse solves.

\begin{table}[tbp]
\centering
\caption{Wall-clock seconds of the stages of Algorithms~\ref{alg:node_gen}--\ref{alg:inverse} on the finest clouds, $h=0.04$, on one NVIDIA RTX 5090, including just-in-time compilation. Setup: stencils, weights, assembly, stability check, and factorization for one cloud and order. RD: reaction--diffusion, AD: advection--diffusion. Forward and inverse solves: medians over the parameter families. The forward solves of RD and AD integrate to $t=T$. The inverse solves are warm-started from the next coarser cloud and share the setup.}
\label{tab:end_to_end}
\footnotesize
\setlength{\tabcolsep}{4pt}
\begin{tabular}{@{}llrrrrrrrrr@{}}
\toprule
& & & & & \multicolumn{3}{c}{Forward solve} & \multicolumn{3}{c}{Inverse solve}\\
\cmidrule(lr){6-8}\cmidrule(l){9-11}
Geometry & $p$ & $N$ & Nodes & Setup & Darcy & RD & AD & Darcy & RD & Transport\\
\midrule
H-channel & 2 & 1160 & 5.89 & 26 & 0.01 & 0.71 & 1.06 & 1.92 & 1.50 & 3.26\\
 & 4 & 1160 &  & 46 & 0.01 & 0.70 & 1.07 & 2.07 & 1.91 & 2.07\\
 & 6 & 1160 &  & 68 & $<0.01$ & 1.66 & 2.07 & 2.76 & 8.76 & 14\\
\midrule
Treasure chest & 2 & 25636 & 59 & 187 & 0.07 & 1.40 & 6.70 & 27 & 3.96 & 20\\
 & 4 & 25636 &  & 377 & 0.16 & 2.70 & 11 & 21 & 13 & 32\\
 & 6 & 25636 &  & 658 & 0.31 & 12 & 32 & 65 & 82 & 146\\
\bottomrule
\end{tabular}
\end{table}


\subsection{A domain with an internal obstacle}
\label{sec:obstacle_results}
The boundaries of the H-channel and the treasure chest are connected. We now present a final example with a two-component boundary, in which an obstacle is placed inside the treasure chest and a reaction law is recovered from a solution carried around the obstacle by a flow. The method of~\cite{shankar2018} already handles smooth embedded objects. However, the obstacle here has curved and planar pieces, edges, vertices, and reentrant creases of the fluid domain. Our framework treats it with the same algorithms as the outer wall (Section~\ref{sec:node_gen_description}); nothing in our node generator or solvers is specific to this obstacle, and further obstacles add pieces in the same way.

\paragraph{Domain}
The obstacle is a smaller, rotated copy of the treasure chest,
\begin{equation}
 \Omega_{\mathrm{obs}}=\bigl\{c_{\mathrm{obs}}+0.3\,Q(x-x_{\mathrm c}):\ x\in\Omega_3\bigr\},\qquad
 \Omega_4=\Omega_3\setminus\overline\Omega_{\mathrm{obs}},
\end{equation}
where $x_{\mathrm c}=(0.9,0,0)$ is the center of the bounding box of $\Omega_3$, $c_{\mathrm{obs}}=(0.6,0,0)$, and $Q$ is the rotation by $25^\circ$ about the $x$-axis followed by the rotation by $35^\circ$ about the $z$-axis. None of the faces of the obstacle is parallel to a face of the outer chest. The boundary of $\Omega_4$ consists of the ten pieces of the outer chest and the ten corresponding pieces of the obstacle. Every piece of the obstacle, including the planar ones, is given only by seed nodes with normals and the trimming loops of its parameter region, with the normals pointing into $\Omega_{\mathrm{obs}}$ (i.e., out of the fluid domain). For each $h\in\{0.16,0.12,0.08,0.06,0.04\}$ we generate one cloud with the parameters of the treasure chest, giving $N=872$ to $26586$ physical nodes. All interior nodes lie inside the exact domain $\Omega_4$, and all ghost nodes lie outside it, those of the obstacle inside $\Omega_{\mathrm{obs}}$ (Figure~\ref{fig:obstacle_nodes}).

\begin{figure}[tbp]
\centering
\includegraphics[width=0.30\textwidth]{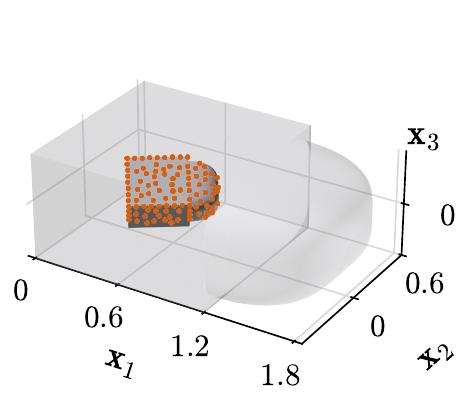}\hspace{3mm}
\includegraphics[width=0.36\textwidth]{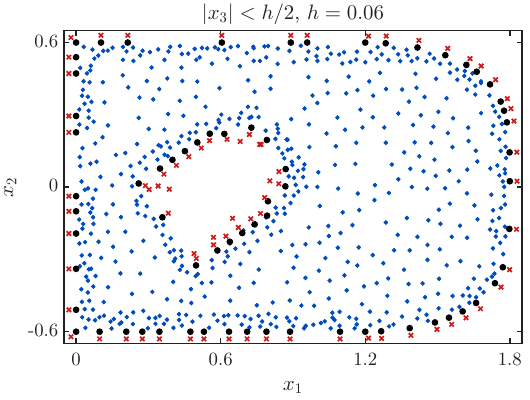}
\caption{The treasure chest with an internal obstacle. Left: the obstacle inside the translucent outer boundary, with its boundary nodes on the visible part of its surface, $h=0.04$. Right: interior nodes (blue), boundary nodes (black), and ghost nodes (red crosses) with $|z|<h/2$, $h=0.06$.}
\label{fig:obstacle_nodes}
\end{figure}

\begin{figure}[tbp]
\centering
\begin{tabular}{@{}c@{\hspace{3mm}}c@{}}
\includegraphics[width=0.42\textwidth]{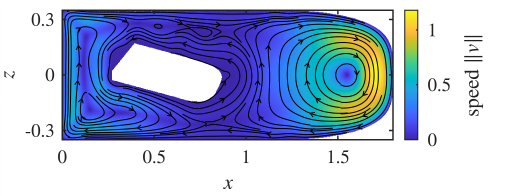} & \includegraphics[width=0.42\textwidth]{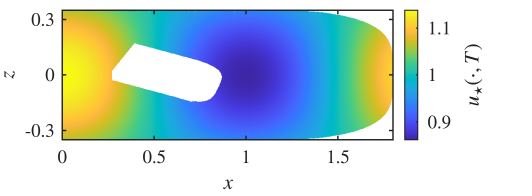}\\[1mm]
\includegraphics[width=0.42\textwidth]{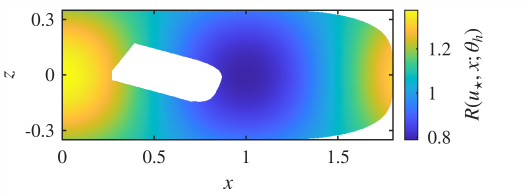} & \includegraphics[width=0.42\textwidth]{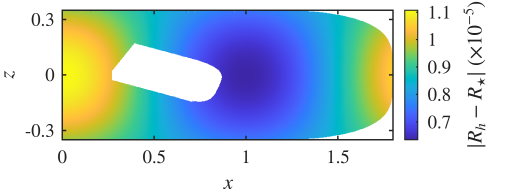}
\end{tabular}
\caption{The problem with an internal obstacle on the section $y=0$. Top: the velocity $v=1.18\,w$, colored by speed, with streamlines, and the solution $u_\star(\cdot,T)$. Bottom: the reaction rate $R(u_\star,x;\theta_h)$ of the sine-product law recovered on the cloud with $h=0.06$ and $p=4$, and its pointwise error.}
\label{fig:obstacle_solution}
\end{figure}

\paragraph{Problem}
The concentration $u$ solves
\begin{equation}
 u_t+v\cdot\nabla u=\nu\Delta u+R(u,x;\theta)+f\ \ \text{in }\Omega_4\times(0,T],\qquad u=g\ \ \text{on }\partial\Omega_4,
 \label{eq:adr}
\end{equation}
with $T=\tfrac12$, $\nu=0.05$, the six reaction laws and true parameters of the reaction--diffusion problem \eqref{eq:rd}, and the manufactured solution \eqref{eq:ustar_ad}. The velocity $v=1.18\,w$ is steady, with $w$ given by \eqref{eq:velocity_field} for the implicit layer $\phi_0$ of the cloud of $\Omega_4$ with $h=0.16$. Note that the zero set of $\phi_0$ approximates both boundary components and $\phi_0$ is negative inside the obstacle. Thus, the divergence-free field $w$ vanishes near the outer wall and on the obstacle surface, and the flow passes around the obstacle without slip (Figure~\ref{fig:obstacle_solution}). The hyperviscosity and the time steps are those of the advection--diffusion and transport problems, and, as in Section~\ref{sec:inverse_tests}, we recover $\theta$ from $u(\cdot,T)$ at the physical nodes with $f$ and $g$ fixed at their true values. The forward error is the relative error $\norm{r(\theta_\star)}$ of the solution with the true parameters.

\begin{figure}[tbp]
\centering
\begin{tabular}{@{}c@{\hspace{1mm}}c@{}}
\includegraphics[scale=0.8]{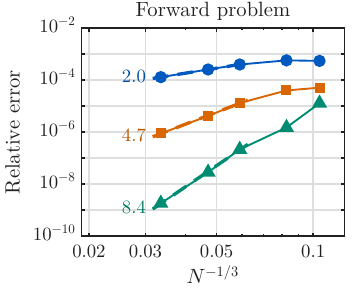} & \includegraphics[scale=0.8]{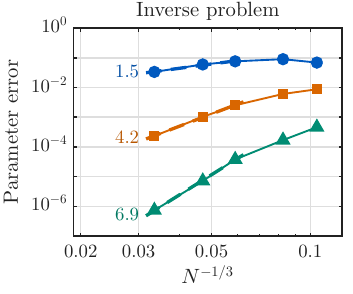}\\
\multicolumn{2}{c}{\includegraphics[scale=0.8]{Images/plots/legend_convergence}}
\end{tabular}
\caption{Advection--diffusion--reaction in the treasure chest with an internal obstacle. Left: relative forward error with the true parameters; right: relative error of the recovered parameters, both against $N^{-1/3}$. Each curve is the largest error over the six reaction laws. The dashed lines are least-squares fits over the three finest clouds, labeled with their slopes.}
\label{fig:obstacle_convergence}
\end{figure}

\paragraph{Results}
Figure~\ref{fig:obstacle_convergence} shows the errors, and Figure~\ref{fig:obstacle_solution} the recovered reaction rate of the sine-product law, whose error is small and nearly uniform across the section. Every cloud and order passes the stability check, and the warm-started inverse solves need at most six Gauss--Newton iterations. The forward error decreases with slopes $2.0$, $4.7$, and $8.4$ for $p=2$, $4$, and $6$, roughly as for advection--diffusion in the treasure chest in Figure~\ref{fig:forward_convergence}. The parameter error decreases with slopes $1.5$, $4.2$, and $6.9$; these are somewhat below the forward slopes, as for the problems of Section~\ref{sec:inverse_results} (slopes of the individual laws in Tables~\ref{tab:forward_slopes} and~\ref{tab:inverse_slopes} in Appendix~\ref{sec:app_slopes}). Again, the efficiency index $\eta$ is below one in every inverse solve (Table~\ref{tab:inverse_efficiency} in Appendix~\ref{sec:app_estimates}). These results show similar forward rates and a similar propagation of forward error into the recovered parameters with and without the obstacle.

\section{Conclusions}
\label{sec:conclusions}
We have presented KernelPack-JAX, a framework that takes samples of smooth boundary pieces through node generation, high-order RBF-FD solves, and parameter recovery on the GPU, with derivatives through the operators, the solves, and the time loops. Our framework rests on the stacked boundary representation, in which the parametric and implicit layers share one hierarchy of supports. The implicit layer, extended into the volume through the components of the uncovered grid cells, classifies every point of space; it serves as the inside test of the interior sampling and guides stencil selection through a visibility heuristic. Internal obstacles are represented like the outer wall. Node selection by greedy independent sets makes the thinning and sampling stages parallel computations with a deterministic result. Finally, the differentiable sparse direct solves make derivatives through long time loops cost a small multiple of the loops themselves, with memory independent of the number of time steps.

In both test domains and in the domain with an internal obstacle, the forward errors converge at high order, and the recovered parameters largely follow them. The exception is transport with $p=2$ in the treasure chest, where the part of the forward error that enters the parameters does not decrease over the finest clouds. In our implementation, the setup of the operators dominates the total cost.

Our error estimates in Section~\ref{sec:analysis} separate consistency, of order $p$ by polynomial reproduction, from stability. However, the stability of the ghost-node closure is assumed; we test a necessary spectral condition and measure the stability constants. The first-order bound of the parameter error by the forward error and the smallest singular value of the Jacobian held in all our experiments (Appendix~\ref{sec:app_estimates}).

The main open question is a proof of uniform stability for RBF-FD collocation with a ghost-node closure on scattered nodes, including the advection-dominated case. We used forward-mode derivatives throughout, since our problems have at most six parameters. For problems with many parameters, we plan to add reverse mode through the same transpose solves, with checkpointing of the time loop. In addition, since a CAD model consists of trimmed parametric faces, it provides the input of our node generator directly, and we plan to apply our framework to CAD models with many faces, fillets, and features smaller than the node spacing.

\section*{CRediT authorship contribution statement}
{\noindent
\textbf{Alberto Cattaneo:} conceptualization, methodology, writing, software.\\
\textbf{M. Keith Ballard:} funding acquisition, resources, supervision.\\
\textbf{Robert M. Kirby:} conceptualization, funding acquisition, project administration.\\
\textbf{Grady B. Wright:} conceptualization, methodology.\\
\textbf{Varun Shankar:} conceptualization, funding acquisition, methodology, project administration, resources, software, supervision, validation, visualization, writing.
}

\section*{Declaration of competing interest}
The authors declare that they have no known competing financial interests or personal relationships that could have appeared to influence the work reported in this paper.

\section*{Data availability}
The source code of KernelPack-JAX, including the scripts that generate every node set, result, table, and figure of this paper (\texttt{scripts/paper/reproduce.sh}), is publicly available at \url{https://github.com/VarShankar/kernelpack-jax}.

\section*{Acknowledgments}
MKB was supported by the United States Air Force Research Laboratory (AFRL). AC, RMK, and VS were supported by the U.S. Air Force through ARCTOS Technology Solutions, LLC, under Task Order FA2394-25-F-B004, Prime Contract FA2394-25-D-B005, through Subcontract 25B005.01.00.0001.00.05-C1. RMK and VS were also supported by Air Force Office of Scientific Research (AFOSR) LRIR grant FA9550-25-1-0042. VS was also supported by the U.S. National Science Foundation (NSF), Division of Mathematical Sciences (DMS), under award 2505986. GBW was supported by NSF grants DMS-2309712 and DMS-2505987.

\appendix
\allowdisplaybreaks
\section{Node generation}
\label{sec:app_nodegen}

\subsection{Parameters}
\label{sec:app_parameters}
Table~\ref{tab:node_gen_parameters} lists the parameters of Section~\ref{sec:node_gen}.

\begin{table}[htbp]
\centering
\caption{Parameters of the node generator, fixed across geometries, resolutions, and random seeds.}
\label{tab:node_gen_parameters}
\footnotesize
\begin{tabular}{llc}
\toprule
Stage & Parameter & Value\\
\midrule
Charts & compression bounds $\mu_c$ & $\tfrac14,\ \tfrac1{20}$\\
Parametric layer & overlap factor $\kappa$ & $1.5$\\
 & minimum seed nodes per support $m_{\min}$ & $12$\\
 & diagonal shift of $\mathbf{A}_j$ & $10^{-12}\max(1,\max|\mathbf{A}_j|)$\\
 & tolerances $\varepsilon_x,\ \varepsilon_n,\ \varepsilon_J$ & $10^{-3},\ 0.05,\ 0.05$\\
 & maximum refinements, maximum cells & $3,\ 128$\\
Boundary nodes & feature site spacing, incidence distance $\delta_{\mathcal F}$ & $h/4,\ 0.02h$\\
 & smooth-seam angle $\theta_{\mathcal F}$ & $2^\circ$\\
 & ghost offset & $h/2$\\
Implicit layer & feature enrichment $g_{\mathcal E},\ \beta_{\mathcal E}$ & $2,\ 2$\\
 & ball enlargement $\kappa_B$ & $1.25$\\
 & points per ball $m_{\mathrm c},\ M_{\mathrm c}$ & $8,\ 512$\\
 & maximum cover refinements & $3$\\
 & quadrature points per axis $N_Q$ & $48$\\
 & relative diagonal shift in \eqref{eq:cfpu_system} & $10^{-12}$\\
Interior nodes & boundary layer $\rho,\ \beta$ & $0.6,\ 1.5$\\
 & surface threshold $\epsilon$ & $0.2$\\
 & candidate density $c_{\mathrm d}$ & $6$\\
 & maximum passes & $4$\\
\bottomrule
\end{tabular}
\end{table}

\subsection{Parameters from points}
\label{sec:app_charts}
A curved piece is often given only by seed nodes $y_{\ell k}\in\Gamma_\ell$, unit normals, and the trimming loops in physical space. We then compute the parameters, for both the seed nodes and the loops, with one of the following charts, and stop with an error if none applies. We fix a piece, drop the index $\ell$, and let $\bar y$ be the mean of the seed nodes.
\begin{itemize}
 \item For a curve, $u=(y-\bar y)\cdot e_1$, where $e_1$ is the principal axis~\cite{pearson1901lines} of the seed nodes. We require $u$ to increase along the curve.
 \item For a surface, we first try the best-fit plane of the seed nodes, with orthonormal axes $e_1,e_2$ and normal $e_3$, and set $u=\bigl((y-\bar y)\cdot e_1,\,(y-\bar y)\cdot e_2\bigr)$. We orient $e_3$ so that the mean of $n_k\cdot e_3$ is positive, and accept this chart if $n_k\cdot e_3\geq\tfrac14$ for every normal $n_k$ of a seed node.
 \item Otherwise we use a stereographic chart. Let $o$ be the centroid of the trimming loops, $e_0$ the unit normal of their best-fit plane, oriented towards the seed nodes, and $e_1,e_2$ its in-plane axes. With the scaled coordinates $\hat y_a=(y-o)\cdot e_a/\max_k|(y_k-o)\cdot e_a|$, $a=0,1,2$, we set
 \begin{equation}
  u=\frac{(\hat y_1,\hat y_2)}{\norm{\hat y}+\hat y_0},
 \end{equation}
 the stereographic projection of the direction $\hat y/\norm{\hat y}$ from the antipode of $e_0$. This chart requires that the piece lies in one hemisphere about $o$, $\hat y_0\geq0$ for every seed node, and that every normal, expressed in the scaled coordinates, makes an angle with the direction $\hat y$ whose cosine is at least $\tfrac14$.
\end{itemize}
An accepted chart must also not compress the seed nodes. With $\tilde y=y$ for the first two charts and $\tilde y=\hat y$ for the stereographic chart, we require
\begin{equation}
 \min_{k'\neq k}\frac{\norm{u_k-u_{k'}}}{\norm{\tilde y_k-\tilde y_{k'}}}\geq\mu_c\quad\text{for every }k,
\end{equation}
with $\mu_c=\tfrac14$ for the first two charts and $\mu_c=\tfrac1{20}$ for the stereographic chart. The cap of the treasure chest in Section~\ref{sec:nodegen_results} receives the stereographic chart. Its parameter region in Figure~\ref{fig:node_stack}(a) is therefore a disk.

\subsection{Adaptive refinement of the parametric layer}
\label{sec:app_refine}
Let $\bar y$ be the mean of all seed nodes and $\delta_{\mathcal D}=2\max_k\norm{y_k-\bar y}$, let $\varepsilon_x$, $\varepsilon_n$, $\varepsilon_J$ be tolerances, and let $\sigma_{\min}$ and $\sigma_{\max}$ denote the smallest and largest singular values of a matrix. We accept the map if every validation node satisfies
\begin{equation}
 \norm{X(u_v)-y_v}\leq\varepsilon_x\delta_{\mathcal D},\qquad \norm{n(u_v)-n_v}\leq\varepsilon_n,\qquad
 \frac{\sigma_{\min}(DX(u_v))}{\sigma_{\max}(DX(u_v))}>\varepsilon_J .
 \label{eq:pu_acceptance}
\end{equation}
We measure the worst violation of \eqref{eq:pu_acceptance} by
\begin{equation}
 E_{\mathrm{fit}}=\max_{v}\max\Bigl(\frac{\norm{X(u_v)-y_v}}{\varepsilon_x\delta_{\mathcal D}},\ \frac{\norm{n(u_v)-n_v}}{\varepsilon_n},\ \frac{\varepsilon_J\,\sigma_{\max}(DX(u_v))}{\sigma_{\min}(DX(u_v))}\Bigr),
\end{equation}
and build the cover as follows.
\begin{enumerate}
 \item Start with the $2^{d-1}$ cells $c_j\in\{-\tfrac12,\tfrac12\}^{d-1}$, $\mathbf{s}_j=(\tfrac12,\dots,\tfrac12)$.
 \item Solve \eqref{eq:pu_local_system} on every support and evaluate \eqref{eq:pu_acceptance} at every validation node. If all validation nodes satisfy it, stop.
 \item Split every cell whose support contains $z(u_v)$ for a failing validation node $u_v$ into its $2^{d-1}$ children, with centers $c_j+\tfrac12\operatorname{diag}(\mathbf{s}_j)\mathbf{t}$, $\mathbf{t}\in\{-1,1\}^{d-1}$, and half-widths $\tfrac12\mathbf{s}_j$. Return to step 2.
\end{enumerate}
We stop with an error if the map is not accepted after three refinements, if a refinement does not reduce $E_{\mathrm{fit}}$, or if the cover exceeds $128$ cells.

\subsection{Oriented samples and the cover of balls}
\label{sec:app_cover}
The oriented samples of Section~\ref{sec:node_gen_levelset_math} are
\begin{equation}
 \{(\xi_k,\hat n_k)\}_{k=1}^{N_\xi}=\{(x_b,n_b):\ x_b\in\chib\}\ \cup\ \mathcal{E}.
\end{equation}
The set $\mathcal{E}$ adds points near sharp features, where the normal field changes abruptly. For parameters $g_{\mathcal E}>1$ and $\beta_{\mathcal E}>0$ we obtain it as follows.
\begin{enumerate}
 \item Sample every piece as in Section~\ref{sec:node_gen_sampling}, with $h$ replaced by $h/g_{\mathcal E}$.
 \item Keep the samples $y$ that lie off the trimming loops and within the distance $\beta_{\mathcal E}h$ of some feature site, $\min_i\norm{y-x^{\mathcal F}_i}<\beta_{\mathcal E}h$.
 \item Thin the kept samples as in \eqref{eq:boundary_thinning} with spacing $h/g_{\mathcal E}$, placing the boundary nodes first so that all of them are kept. Each remaining sample $y=X_\ell(u)$ carries the normal $n_\ell(u)$.
\end{enumerate}

Let $Q$ be the tensor Gauss--Legendre rule with $N_Q$ points per axis on the bounding box of each parameter region $U_\ell$, restricted to $U_\ell$. Let $\kappa_B>1$ be an enlargement factor, and let $\bar s$ be the median over $k$ of the distance from $\xi_k$ to its nearest neighbor among the other samples. Each support $S_j$ of the parametric layer, on a piece with map $X$, defines the mapped set, its mean, and a radius,
\begin{gather}
 \mathcal{Y}_j=\bigl\{X(u):\ u\in Q,\ z(u)\in S_j\bigr\},\qquad
 \bar x_j=\frac{1}{|\mathcal{Y}_j|}\sum_{y\in \mathcal{Y}_j}y,\\
 R_j=\kappa_B\max\Bigl(\max_{y\in \mathcal{Y}_j}\norm{y-\bar x_j},\,2\bar s\Bigr),
\end{gather}
and thereby the ball
\begin{equation}
 B_j=\bigl\{x\in\R^d:\ \norm{x-\bar x_j}<R_j\bigr\}.
\end{equation}
The samples in $B_j$ have the indices $I_j=\{k:\ \xi_k\in B_j\}$. For prescribed bounds $m_{\mathrm c}<M_{\mathrm c}$ we adjust the cover by two rules.
\begin{itemize}
 \item If $|I_j|<m_{\mathrm c}$, let $\delta_j$ be the distance from $\bar x_j$ to its $m_{\mathrm c}$-th nearest sample. We set $R_j\leftarrow\max(R_j,1.01\,\delta_j)$, and stop with an error if this exceeds twice the original radius.
 \item If $|I_j|>M_{\mathrm c}$, we split the cell $C_j$ into its $2^{d-1}$ children, refit the parametric layer on that piece, rebuild $\chib$, and rebuild the balls.
\end{itemize}
We allow at most three refinements by the second rule. Finally, we check that every sample lies in at least one ball. To test that the boundary is closed and consistently oriented, we also check that the quadrature of $\int_{\partial\Omega}n\,dS=0$ with the rule of $2N_Q$ points per axis on all pieces has norm at most $0.1$ times the quadrature of the area.

\subsection{Implementation}
\label{sec:app_listings_nodegen}
We pad every array that enters a compiled kernel to a power of two and mask the padding, so that all kernels have fixed shapes. To find the neighbors of the points, we bin them into a uniform grid with cell width $\max_ir_i$. Sorting the points by the index of their cell gives each cell a contiguous range of points, and we copy these ranges into a table with one row per cell and a fixed number of columns (Listing~\ref{lst:cell_table}). For each point we then gather the rows of the $3^d$ cells around it, test the distances, and store the indices of the neighbors in an array of fixed width, padded with $-1$.
\begin{lstlisting}[language=Python,basicstyle=\ttfamily\lstsize,caption={Binning points into a table of grid cells.},label={lst:cell_table}]
@partial(jax.jit, static_argnames=("shape", "capacity"))
def cell_table(x, lo, width, shape, capacity):   # shape: cells per axis
    ijk = ((x - lo) // width).astype(int).T
    cell = jnp.ravel_multi_index(ijk, shape, mode="clip")    # cell of each point
    order = jnp.argsort(cell, stable=True)          # points sorted by cell
    count = jnp.bincount(cell, length=math.prod(shape))
    start = jnp.cumsum(count) - count               # first slot of each cell
    slot = jnp.arange(x.shape[0]) - start[cell[order]]   # position in cell
    table = jnp.full((math.prod(shape), capacity), -1)
    return table.at[cell[order], slot].set(order, mode="drop")
\end{lstlisting}

Listing~\ref{lst:greedy} shows the parallel greedy independent set of Section~\ref{sec:node_gen_gpu}. Each round gathers the states of the neighbors and takes two reductions over the columns of the neighbor array, and the rounds continue until no point is undecided.
\begin{lstlisting}[language=Python,basicstyle=\ttfamily\lstsize,caption={The parallel greedy independent set.},label={lst:greedy}]
@jax.jit
def greedy_set(nbr, rank, state):        # state: 0 = U, 1 = I, 2 = O
    pad = nbr < 0
    j = jnp.where(pad, 0, nbr)
    higher = ~pad & (rank[j] < rank[:, None])
    def round_(s):
        sj = s[j]
        has_in = jnp.any((sj == 1) & ~pad, axis=1)
        blocked = jnp.any((sj == 0) & higher, axis=1)
        s = jnp.where((s == 0) & has_in, 2, s)
        return jnp.where((s == 0) & ~blocked, 1, s)
    return lax.while_loop(lambda s: jnp.any(s == 0), round_, state)
\end{lstlisting}

One pass of the interior sampling of Section~\ref{sec:node_gen_interior_math} is a greedy independent set over the fixed points followed by the candidates (Listing~\ref{lst:dart}). The fixed points come first in the priority order and start in $\mathsf{I}$, and a random permutation orders the candidates. The accepted candidates are then those of sequential dart throwing in that random order.
\begin{lstlisting}[language=Python,basicstyle=\ttfamily\lstsize,caption={One pass of dart throwing as a greedy independent set.},label={lst:dart}]
def dart_pass(fixed, r_fixed, cand, r_cand, rng):
    n0, n1 = len(fixed), len(cand)
    x = jnp.concatenate([fixed, cand])
    r = jnp.concatenate([r_fixed, r_cand])
    nbr = neighbor_lists(x, r)           # |x_i - x_j| < max(r_i, r_j)
    rank = jnp.concatenate([jnp.arange(n0), n0 + rng.permutation(n1)])
    state = jnp.concatenate([jnp.ones(n0, int), jnp.zeros(n1, int)])
    state = greedy_set(nbr, rank, state)
    return cand[state[n0:] == 1]
\end{lstlisting}

Listing~\ref{lst:grouped} evaluates $f_{\mathrm{loc}}$ group by group, as described in Section~\ref{sec:node_gen_grouping}. The groups are formed on the host, and each width $2^e$ compiles one kernel.
\begin{lstlisting}[language=Python,basicstyle=\ttfamily\lstsize,caption={Evaluation of $f_{\mathrm{loc}}$ grouped by the number of overlapping balls.},label={lst:grouped}]
@partial(jax.jit, static_argnames="width")
def f_loc_group(x, width, centers, radii, fits):   # x: points of one group
    def one(xi):
        r = jnp.linalg.norm(xi - centers, axis=1) / radii
        w = jnp.maximum(1 - r, 0)**4 * (4*r + 1)          # omega_j(xi)
        w_top, j = jax.lax.top_k(w, width)                # contains all w_j > 0
        f = local_values(xi, j, fits)                     # f_j(xi) for the selected j
        return jnp.sum(w_top * f) / jnp.sum(w_top)
    return jax.lax.map(one, x, batch_size=256)

m = overlap_counts(x, centers, radii)                     # m(x)
e = np.ceil(np.log2(np.maximum(m, 1))).astype(int)
for k in np.unique(e):                                    # one kernel per width
    phi[e == k] = f_loc_group(x[e == k], 2**k, centers, radii, fits)
\end{lstlisting}

Listing~\ref{lst:cfpu_fit} shows the curl-free system of Section~\ref{sec:node_gen_levelset_math} for a group of balls with at most $n$ samples each. A mask marks the samples that are present, and the rows of the missing ones are replaced by rows of the identity.
\begin{lstlisting}[language=Python,basicstyle=\ttfamily\lstsize,caption={The curl-free local fits for a group of balls.},label={lst:cfpu_fit}]
@jax.jit
def curl_free_fits(q, nrm, mask):        # q, nrm: (B, n, d) scaled points, normals
    B, n, d = q.shape
    D = q[:, :, None] - q[:, None, :]                          # (B, n, n, d)
    r = jnp.linalg.norm(D, axis=-1)
    rs = jnp.where(r > 0, r, 1.0)
    Phi = -3 * (r[..., None, None] * jnp.eye(d)
                + D[..., :, None] * D[..., None, :] / rs[..., None, None])
    Phi = Phi * (mask[:, :, None] * mask[:, None, :])[..., None, None]
    G = Phi.transpose(0, 1, 3, 2, 4).reshape(B, n * d, n * d)
    md = jnp.repeat(mask, d, axis=1)                           # (B, n d)
    G = G + jnp.eye(n * d) * (1 - md)[:, None, :]              # padded rows
    E = jnp.tile(jnp.eye(d), (n, 1)) * md[..., None]           # 1 (x) I_d
    K = jnp.block([[G, E], [E.swapaxes(1, 2), jnp.zeros((B, d, d))]])
    rhs = jnp.concatenate([(nrm * mask[..., None]).reshape(B, n * d),
                           jnp.zeros((B, d))], axis=1)
    sol = jnp.linalg.solve(K, rhs[..., None])[..., 0]
    return sol[:, :n * d].reshape(B, n, d), sol[:, n * d:]     # lambda_jk, b_j
\end{lstlisting}

The min-label propagation of Section~\ref{sec:node_gen_farfield} acts on the whole background grid at once. We obtain the labels of the $2d$ face neighbors of all cells by shifting the array of labels by one cell along each axis (Listing~\ref{lst:labels}). Covered cells and the cells beyond the edges of the grid carry the largest integer and never lower a label. The sign of each component then follows from one segment reduction over the labels.
\begin{lstlisting}[language=Python,basicstyle=\ttfamily\lstsize,caption={Labels of the connected components of the free cells.},label={lst:labels}]
@jax.jit
def label_components(free):              # free: boolean array of grid cells
    big = jnp.iinfo(jnp.int32).max
    lab = jnp.where(free, jnp.arange(free.size).reshape(free.shape), big)
    def shift(l, axis, s):               # neighbor label, big beyond the edge
        n = l.shape[axis]
        edge = jnp.arange(n) == (0 if s == 1 else n - 1)
        edge = jnp.expand_dims(edge, [a for a in range(l.ndim) if a != axis])
        return jnp.where(edge, big, jnp.roll(l, s, axis=axis))
    def body(carry):
        l, _ = carry
        new = l
        for axis in range(l.ndim):
            for s in (1, -1):
                new = jnp.minimum(new, shift(l, axis, s))
        new = jnp.where(free, new, big)
        return new, jnp.any(new != l)
    return lax.while_loop(lambda c: c[1], body, (lab, True))[0]
\end{lstlisting}

\section{Stencils and solvers}
\label{sec:app_solvers}

\subsection{Visibility by ray tracing}
\label{sec:app_raytrace}
The stencil selection of Section~\ref{sec:forward_stencils} needs to decide whether $\phi\geq-\delta_\phi$ along a segment $[x_c,y]$. If $\phi$ is Lipschitz continuous with constant $L_\phi$, then
\begin{equation}
 \phi(x_c)+\phi(y)>L_\phi\norm{y-x_c}
 \label{eq:visibility_certificate}
\end{equation}
implies that $[x_c,y]$ is visible, since $\phi$ cannot fall below zero between the two cones of slope $L_\phi$ at the ends. The certificate settles most pairs from the values of $\phi$ at the nodes alone. For each remaining segment we start at $z=x_c$ and move $z$ toward $y$ by $\max(\phi(z)/L_\phi,\varepsilon_{\mathrm{ray}})$ per step. We stop with the segment hidden when $\phi(z)<-\delta_\phi$, and with the segment visible when $z$ reaches $y$ or when \eqref{eq:visibility_certificate} holds with $x_c$ replaced by $z$. We take $L_\phi=1.5$, above the largest measured $\norm{\nabla\phi}$. Since $L_\phi$ is a measured bound and $\phi$ jumps at the edge of the cover, the test is a heuristic. We take $\varepsilon_{\mathrm{ray}}=0.1h$ and count a segment that is still undecided after $400$ steps as hidden. Off the cover of the implicit layer, $\phi$ is the constant $\pm\phi_\infty$ of \eqref{eq:implicit_blend}, which we replace by $\pm0.2\min_jR_j$. This is a lower bound of the distance to $\partial\Omega$ for uncovered points, since every boundary point lies in a ball $B_j$ within $R_j/\kappa_B$ of its center, with $\kappa_B=1.25$. All rays of one step are evaluated together by the grouped evaluation of Section~\ref{sec:node_gen_grouping}.

\subsection{Implementation}
\label{sec:app_listings_solvers}
Listing~\ref{lst:weights} computes the Laplacian weights of Section~\ref{sec:forward_operators} for a batch of stencils.
\begin{lstlisting}[language=Python,basicstyle=\ttfamily\lstsize,caption={Laplacian weights for a batch of $n_{\mathrm{batch}}$ stencils of $n_s$ nodes each, with the center first in each stencil, without the QR fallback of Section~\ref{sec:forward_operators}.},label={lst:weights}]
@partial(jax.jit, static_argnames="m")
def laplacian_weights(X, m):                 # X: (nb, ns, d) stencil nodes
    nb, ns, d = X.shape
    r = jnp.linalg.norm(X[:, :, None] - X[:, None, :], axis=-1)  # (nb, ns, ns)
    hc = r.max(axis=(1, 2))                                     # diameters h_c
    Z = (X - X.mean(axis=1, keepdims=True)) / hc[:, None, None]
    Pm = poly_basis(Z)                                          # (nb, ns, nq)
    nq = Pm.shape[-1]
    A = jnp.concatenate([
        jnp.concatenate([r**m, Pm], axis=2),
        jnp.concatenate([Pm.swapaxes(1, 2), jnp.zeros((nb, nq, nq))], axis=2)],
        axis=1)                                                 # (nb, ns+nq, ns+nq)
    rc = r[:, 0, :]                          # distances to the stencil center
    b = jnp.concatenate([m * (m + d - 2) * rc**(m - 2),
                         poly_laplacian(Z[:, 0]) / hc[:, None]**2], axis=1)
    return jnp.linalg.solve(A, b[..., None])[:, :ns, 0]          # (nb, ns)
\end{lstlisting}

Listing~\ref{lst:matvec} is the COO product of Section~\ref{sec:forward_operators}.
\begin{lstlisting}[language=Python,basicstyle=\ttfamily\lstsize,caption={A sparse matrix in COO form and its product with a vector; \texttt{centers} holds the row of each stencil and \texttt{stencils} its $n_s$ columns.},label={lst:matvec}]
rows = jnp.repeat(centers, ns)           # row of each weight
cols = stencils.reshape(-1)              # column of each weight
vals = weights.reshape(-1)

def matvec(vals, u):
    return jnp.zeros(N).at[rows].add(vals * u[cols])
\end{lstlisting}

Listing~\ref{lst:sbdf3} is the SBDF3 integration of Section~\ref{sec:forward_time}.
\begin{lstlisting}[language=Python,basicstyle=\ttfamily\lstsize,caption={SBDF3 integration with the ghost-node closure as one compiled loop.},label={lst:sbdf3}]
@jax.jit
def integrate(theta, U0, U1, U2):
    def step(j, state):
        Um2, Um1, Un = state                    # U^{j-2}, U^{j-1}, U^j
        t = (j + 1) * dt
        rhs = (3*Un[:N] - 1.5*Um1[:N] + Um2[:N]/3
               + dt * (3*Nh(t - dt, Un, theta) - 3*Nh(t - 2*dt, Um1, theta)
                       + Nh(t - 3*dt, Um2, theta)))
        return Um1, Un, solve(jnp.concatenate([rhs, g(t)]))
    return jax.lax.fori_loop(2, n_t, step, (U0, U1, U2))[-1]
\end{lstlisting}

Listing~\ref{lst:dense_jvp} shows the derivative rule of Section~\ref{sec:inverse_optimization}, which forms the Jacobian of a time-stepping residual in one forward-mode pass, and the Levenberg--Marquardt solve.
\begin{lstlisting}[language=Python,basicstyle=\ttfamily\lstsize,caption={Residual with a dense forward-mode Jacobian, and the Levenberg--Marquardt solve; \texttt{guarded} replaces a non-finite residual by $10^6$ with zero derivative.},label={lst:dense_jvp}]
def dense_jvp(residual):
    @jax.custom_jvp
    def r(theta):
        return residual(theta)

    @r.defjvp
    def r_jvp(primals, tangents):
        (theta,), (dtheta,) = primals, tangents
        def value_and_aux(th):
            v = residual(th)
            return v, v
        J, value = jax.jacfwd(            # one rollout with n_theta tangents
            value_and_aux, has_aux=True)(theta)
        return value, J @ dtheta
    return r

r = dense_jvp(guarded(residual))  # non-finite r -> 1e6, zero derivative
lm = optx.LevenbergMarquardt(rtol=1e-12, atol=1e-12,
         linear_solver=lx.AutoLinearSolver(well_posed=False))
sol = optx.least_squares(lambda th, _: r(th), lm, jnp.zeros(n_theta),  # cold start
          options={"jac": "fwd"}, max_steps=200, throw=False)
\end{lstlisting}

\section{Proofs}
\label{sec:proofs}

\subsection{Proof of Proposition~\ref{prop:mis}}
\label{sec:app_mis}
\begin{proof}
In any round, the undecided point of highest priority has no undecided neighbor of higher priority, so it is decided in that round, and the algorithm terminates. We show by induction along the priority order that every point that is not fixed ends in the state that the sequential algorithm assigns to it. Since a point enters $\mathsf{I}$ only after all of its higher-priority neighbors are decided, no lower-priority neighbor of $i$ enters $\mathsf{I}$ while $i$ is undecided. If $i$ moves to $\mathsf{O}$, its neighbor in $\mathsf{I}$ has higher priority. By the induction hypothesis this neighbor is accepted sequentially, and so $i$ is rejected sequentially. If $i$ moves to $\mathsf{I}$, all of its higher-priority neighbors are decided and none of them is in $\mathsf{I}$. By the induction hypothesis none of them is accepted sequentially, and so $i$ is accepted sequentially.
\end{proof}

\subsection{Truncation error of a stencil}
\label{sec:proof_consistency}
\begin{lemma}
\label{lem:consistency}
If $s\leq\varpi$ and $u\in C^{\varpi+1}$ on the closed ball of radius $\varrho_c$ about $x_c$, the bound \eqref{eq:consistency} holds with $|u|_{C^{\varpi+1}}$ the maximum of $|D^{\varpi+1}u(y)[\xi,\dots,\xi]|$ over this ball and over unit vectors $\xi$.
\end{lemma}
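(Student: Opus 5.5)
We expand $u$ about the center $x_c$ in a Taylor polynomial of degree $\varpi$ with an integral (or Lagrange) remainder, and use the exactness of the stencil on polynomials of degree $\varpi$ to cancel the polynomial part. We write $u=T_\varpi u+R$, where $T_\varpi u$ is the Taylor polynomial of degree $\varpi$ at $x_c$. Since $s\leq\varpi$, the operator $\mathcal{L}$ of order $s$ sees only derivatives of order at most $s$ at $x_c$, and these coincide for $u$ and $T_\varpi u$. Hence $(\mathcal{L}u)(x_c)=(\mathcal{L}T_\varpi u)(x_c)$. Here we take $\mathcal{L}$ to be a differential operator of order $s$ evaluated at $x_c$, possibly with variable coefficients frozen at $x_c$, as for $L$, $D_a$, and $n_b\cdot\nabla$. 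The remainder $R$ vanishes to order $\varpi$ at $x_c$, so $(\mathcal{L}R)(x_c)=0$.

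By polynomial reproduction, $\sum_k w_{ck}(T_\varpi u)(x_{ck})=(\mathcal{L}T_\varpi u)(x_c)$. Subtracting, the stencil error equals $\sum_k w_{ck}R(x_{ck})$. It remains to bound $|R(x_{ck})|$. For each stencil node we restrict to the segment $y(t)=x_c+t(x_{ck}-x_c)$, $t\in[0,1]$, which lies in the closed ball of radius $\varrho_c$ by convexity. With $\xi=(x_{ck}-x_c)/\norm{x_{ck}-x_c}$ and $g(t)=u(y(t))$, the one-dimensional Taylor theorem with Lagrange remainder gives
\begin{equation*}
 R(x_{ck})=\frac{g^{(\varpi+1)}(t^*)}{(\varpi+1)!}=\frac{\norm{x_{ck}-x_c}^{\varpi+1}}{(\varpi+1)!}\,D^{\varpi+1}u(y(t^*))[\xi,\dots,\xi]
\end{equation*}
for some $t^*\in(0,1)$. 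Therefore $|R(x_{ck})|\leq\varrho_c^{\varpi+1}|u|_{C^{\varpi+1}}/(\varpi+1)!$, with the seminorm as defined in the statement. Summing with $|w_{ck}|$ by the triangle inequality yields \eqref{eq:consistency}. If $x_{ck}=x_c$, then $R(x_{ck})=0$ and that term contributes nothing.

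The only delicate point is the claim $(\mathcal{L}R)(x_c)=0$, equivalently that $\mathcal{L}u(x_c)$ depends only on the $s$-jet of $u$ at $x_c$. I would state explicitly that this holds for the linear differential operators of order $s$ considered, with the coefficients evaluated at $x_c$. With this assumption the rest is routine, and no smoothness beyond $C^{\varpi+1}$ on the ball is needed. Note that the directional form of the seminorm is exactly what the one-dimensional restriction produces, so no multinomial constants appear.
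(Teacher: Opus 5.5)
Your proposal is correct and follows the paper's proof. You cancel the degree-$\varpi$ Taylor polynomial using $s\le\varpi$ and polynomial reproduction, then bound the Taylor remainder at each stencil node. Your restriction to the segment with the Lagrange remainder supplies the step the paper leaves to ``Taylor's theorem'', and it shows why the directional seminorm appears without multinomial constants.
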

\begin{proof}
Let $\tilde u$ be the Taylor polynomial of $u$ of degree $\varpi$ about $x_c$. Since $s\leq\varpi$, $(\mathcal{L}\tilde u)(x_c)=(\mathcal{L}u)(x_c)$, and since the weights reproduce $\mathcal{L}$ on polynomials of degree $\varpi$, the left-hand side of \eqref{eq:consistency} equals $|\sum_kw_{ck}(u-\tilde u)(x_{ck})|$. Taylor's theorem bounds $|(u-\tilde u)(x_{ck})|$ by $\varrho_c^{\varpi+1}|u|_{C^{\varpi+1}}/(\varpi+1)!$.
\end{proof}

\subsection{Proof of Theorem~\ref{thm:elliptic}}
\label{sec:proof_elliptic}
\begin{proof}
Subtracting the bordered closure from $A\Pi u=\bar\Pi f+\tau_A$ and $B\Pi u=\tilde g+\tau_B$ gives $A_{\mathrm P}E_{\mathrm P}+A_{\mathrm G}E_{\mathrm G}-\lambda\mathbf 1=\tau_A$ and $B_{\mathrm P}E_{\mathrm P}+B_{\mathrm G}E_{\mathrm G}-\lambda\mathbf 1=\tau_B$, where $E_{\mathrm P}=\bar E$. The boundary rows give $E_{\mathrm G}=B_{\mathrm G}^{-1}(\tau_B+\lambda\mathbf 1-B_{\mathrm P}\bar E)$, and inserting $E_{\mathrm G}$ into the first rows gives $M\bar E=\tau_A-G\tau_B+\lambda\upsilon$. Multiplying by $\zeta^T$ gives $0=\zeta^T(\tau_A-G\tau_B)+\lambda\zeta^T\upsilon$, which is the expression for $\lambda$, and then $M\bar E=(I-\upsilon\zeta^T/\zeta^T\upsilon)(\tau_A-G\tau_B)\in\zeta^\perp$. Since $\bar E$ has mean zero by the shift, $\bar E=M_Q^{-1}(I-\upsilon\zeta^T/\zeta^T\upsilon)(\tau_A-G\tau_B)$. The rank-one projector $\upsilon\zeta^T/\zeta^T\upsilon$ has norm $\norm{\upsilon}\norm{\zeta}/|\zeta^T\upsilon|$, and so has its complement, which is neither $0$ nor $I$. Finally $\norm{\tau_A-G\tau_B}\leq\norm{\tau_A}+\norm{G}\norm{\tau_B}$.
\end{proof}

\subsection{Truncation error of SBDF3}
\label{sec:proof_sbdf3}
We define the local truncation error of \eqref{eq:sbdf3} by inserting the exact solution $u^j=u(\cdot,t_j)$,
\begin{multline}
 \Delta t\,\tau^{n+1}=\tfrac{11}{6}\bar\Pi u^{n+1}-\Delta t\,A\Pi u^{n+1}-3\bar\Pi u^{n}+\tfrac32\bar\Pi u^{n-1}-\tfrac13\bar\Pi u^{n-2}\\
 -\Delta t\bigl[3\mathcal{N}_h(t_n,\Pi u^n)-3\mathcal{N}_h(t_{n-1},\Pi u^{n-1})+\mathcal{N}_h(t_{n-2},\Pi u^{n-2})\bigr].
\end{multline}

\begin{lemma}
\label{lem:parabolic_consistency}
If $u$ is four times and $t\mapsto\mathcal{N}(t,u(t))$ three times continuously differentiable in time, then
\begin{multline*}
 \norm{\tau^{n+1}}_\infty\leq C_t\Delta t^3+\max_{t}\norm{A\Pi u(t)-\bar\Pi\mathcal{A}u(t)}_\infty\\
 +7\max_{t}\norm{\mathcal{N}_h(t,\Pi u(t))-\bar\Pi\mathcal{N}(t,u(t))}_\infty,
\end{multline*}
where $C_t$ depends only on bounds for $\partial_t^4u$ and $\partial_t^3\mathcal{N}(t,u(t))$. The same bound holds if part of $\mathcal{N}$ is evaluated at $t_{n+1}$. If $u(\cdot,t)\in C^{p+2}$ on the stencil balls, the spatial terms and the boundary part $\tau_B^{n+1}$ are bounded by \eqref{eq:consistency}.
\end{lemma}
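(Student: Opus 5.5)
The bound splits the truncation error into a time part, a linear-operator part and an explicit-term part. I would add and subtract the continuous operators at the nodes. Let $u^j=u(\cdot,t_j)$. Since $u$ solves \eqref{eq:semilinear} at the physical nodes,
\begin{equation*}
 \bar\Pi\mathcal{A}u^{n+1}=\bar\Pi u_t(t_{n+1})-\bar\Pi\mathcal{N}(t_{n+1},u^{n+1}).
\end{equation*}
Write $A\Pi u^{n+1}=\bar\Pi\mathcal{A}u^{n+1}+\bigl(A\Pi u^{n+1}-\bar\Pi\mathcal{A}u^{n+1}\bigr)$. Likewise, write each $\mathcal{N}_h(t_j,\Pi u^j)$ as $\bar\Pi\mathcal{N}(t_j,u^j)$ plus a spatial defect. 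Dividing the definition of $\tau^{n+1}$ by $\Delta t$ then gives
\begin{equation*}
 \tau^{n+1}=\tau_{\mathrm{time}}-\bigl(A\Pi u^{n+1}-\bar\Pi\mathcal{A}u^{n+1}\bigr)-\bigl[3\delta^n-3\delta^{n-1}+\delta^{n-2}\bigr],
\end{equation*}
with $\delta^j=\mathcal{N}_h(t_j,\Pi u^j)-\bar\Pi\mathcal{N}(t_j,u^j)$. The last bracket is bounded in the max-norm by $(3+3+1)\max_t\norm{\delta(t)}_\infty$, which gives the factor $7$. The middle term is the operator defect in the statement.

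The time part, evaluated pointwise at each physical node $x$, is
\begin{equation*}
 \tau_{\mathrm{time}}=\tfrac{1}{\Delta t}\bigl(\tfrac{11}{6}u^{n+1}-3u^n+\tfrac32u^{n-1}-\tfrac13u^{n-2}\bigr)-u_t(t_{n+1})+\bigl[\eta(t_{n+1})-3\eta(t_n)+3\eta(t_{n-1})-\eta(t_{n-2})\bigr],
\end{equation*}
with $\eta(t)=\mathcal{N}(t,u(t))(x)$. The first piece is the BDF3 error. Its coefficients annihilate polynomials of degree $3$ in $t$ and reproduce the derivative at $t_{n+1}$. Taylor expansion about $t_{n+1}$ with integral remainder therefore bounds it by $C\Delta t^3\max|\partial_t^4u|$.

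The second piece is the third backward difference of $\eta$. It vanishes on quadratics, so it is bounded by $\Delta t^3\max|\partial_t^3\eta|$. Both constants depend only on these derivative bounds and are uniform in $x$, which gives $C_t$. If part of $\mathcal{N}$ is evaluated at $t_{n+1}$ instead of extrapolated, the time part of that piece vanishes. Its spatial defect enters with coefficient $1\le7$, so the bound is unchanged.

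For the spatial statements, I would apply Lemma~\ref{lem:consistency} row by row at fixed $t$, with $\varpi=p+1$. This bounds the operator defect, the part of $\delta$ coming from the $D_a$ rows in advection, and the boundary rows $\tau_B^{n+1}=B\Pi u^{n+1}-g(t_{n+1})$, which have no time part, by \eqref{eq:consistency}. This requires $u(\cdot,t)\in C^{p+2}$. A purely pointwise $\mathcal{N}$ such as a reaction term has $\delta=0$.

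The main obstacle is bookkeeping rather than analysis. One has to check that the BDF3 weights and the extrapolation weights indeed combine so that the exact continuous PDE cancels at $t_{n+1}$. In particular, the extrapolated $\mathcal{N}$ terms must be compared with $\mathcal{N}(t_{n+1})$ through the third difference, and not with each $\mathcal{N}(t_j)$ separately. I would verify this first by checking exactness on cubic-in-time $u$ and quadratic-in-time $\eta$.
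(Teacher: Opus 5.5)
Your proof is correct and takes the same route as the paper's. You subtract the PDE at $t_{n+1}$, Taylor-expand the BDF3 difference and the extrapolation $3\eta^n-3\eta^{n-1}+\eta^{n-2}$ about $t_{n+1}$, and collect the spatial defects of $A$ at $t_{n+1}$ and of $\mathcal{N}_h$ at three time levels with total coefficient $3+3+1=7$; your identification of the extrapolation error as a third backward difference bounded by $\Delta t^3\max|\partial_t^3\eta|$ simply makes the paper's $O(\Delta t^3)$ claim quantitative.
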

\begin{proof}
Subtract $\Delta t\,\bar\Pi(u_t-\mathcal{A}u-\mathcal{N})(t_{n+1})=0$. By Taylor expansion about $t_{n+1}$, the backward difference $\tfrac{11}{6}v^{n+1}-3v^n+\tfrac32v^{n-1}-\tfrac13v^{n-2}$ equals $\Delta t\,v'(t_{n+1})+O(\Delta t^4)$, and the extrapolation $3w^n-3w^{n-1}+w^{n-2}$ equals $w(t_{n+1})+O(\Delta t^3)$. The remaining terms are the spatial truncation errors of $A$ at $t_{n+1}$ and of $\mathcal{N}_h$ at three time levels, with coefficients of total magnitude $7$.
\end{proof}
The $2$-norms in Theorem~\ref{thm:parabolic} are at most $\sqrt{N+N_b}$ times these maximum norms, a factor that the relative errors cancel as in the elliptic case.

\subsection{Proof of Theorem~\ref{thm:parabolic}}
\label{sec:proof_parabolic}
\begin{proof}
Let $\tau^{n+1}=(\tau^{n+1}_A,\tau^{n+1}_B)$, where $\tau^{n+1}_A$ is defined in Appendix~\ref{sec:proof_sbdf3} and $\tau^{n+1}_B=B\Pi u^{n+1}-g(t_{n+1})$. Subtracting \eqref{eq:sbdf3} gives $BE^{n+1}=\tau^{n+1}_B$ and
\begin{equation*}
 \tfrac{11}{6}\bar E^{n+1}-\Delta t\,AE^{n+1}=3\bar E^n-\tfrac32\bar E^{n-1}+\tfrac13\bar E^{n-2}+\Delta t\,z^{n+1},
\end{equation*}
with $z^{n+1}=3\vartheta^n-3\vartheta^{n-1}+\vartheta^{n-2}+\tau^{n+1}_A$ and $\vartheta^j=\mathcal{N}_h(t_j,\Pi u^j)-\mathcal{N}_h(t_j,U^j)$. The boundary rows give $E^{n+1}_{\mathrm G}=B_{\mathrm G}^{-1}(\tau^{n+1}_B-B_{\mathrm P}\bar E^{n+1})$, and the physical rows become
\begin{equation*}
 \bigl(\tfrac{11}{6}I-\Delta t\,M\bigr)\bar E^{n+1}=3\bar E^n-\tfrac32\bar E^{n-1}+\tfrac13\bar E^{n-2}+\Delta t\,w^{n+1},
\end{equation*}
with $w^{n+1}=z^{n+1}+G\tau^{n+1}_B$. The differences $\vartheta^j$ enter only the physical rows, so $\norm{w^j}\leq L_{\mathcal N}(3\norm{\bar E^{j-1}}+3\norm{\bar E^{j-2}}+\norm{\bar E^{j-3}})+\norm{\tau^j_A}+\norm{G}\norm{\tau^j_B}$ by the Lipschitz condition, and the stability assumption gives
\begin{equation*}
 \norm{\bar E^n}\leq C_S\Bigl(\max_{j\leq2}\norm{\bar E^j}+T\max_j\bigl(\norm{\tau^j_A}+\norm{G}\norm{\tau^j_B}\bigr)\Bigr)+7C_SL_{\mathcal N}\,\Delta t\sum_{j=0}^{n-1}\norm{\bar E^j}.
\end{equation*}
The discrete Gronwall inequality~\cite{clark1987short} yields \eqref{eq:parabolic_bound}.
\end{proof}

\subsection{Proof of Theorem~\ref{thm:inverse_aposteriori}}
\label{sec:proof_aposteriori}
\begin{proof}
Let $\bar J=\int_0^1J_h(\theta_\star+t\delta)\,dt$, so that $r_h(\theta_h)=r_h(\theta_\star)+\bar J\delta$, and let $\hat J=J_h(\theta_h)$. Then $\norm{\hat J-\bar J}\leq\int_0^1L_J(1-t)\norm{\delta}\,dt=\tfrac12L_J\norm{\delta}$. Stationarity reads $\hat J^T(r_h(\theta_\star)+\hat J\delta-(\hat J-\bar J)\delta)=0$, and multiplying by $(\hat J^T\hat J)^{-1}$ gives $\delta=-\hat J^{+}r_h(\theta_\star)+\hat J^{+}(\hat J-\bar J)\delta$. With $\norm{\hat J^{+}}=1/\sigma_h$ this gives \eqref{eq:inverse_bound}.
\end{proof}

\subsection{Proof of Theorem~\ref{thm:inverse_apriori}}
\label{sec:proof_apriori}
\begin{proof}
Let $R_0=\sigma_0/(3L_0)$. For $\norm{\theta-\theta_\star}=R_0$, $r_h(\theta)=r_h(\theta_\star)+\bar J(\theta-\theta_\star)$ with $\norm{\bar J-J_h(\theta_\star)}\leq\tfrac12L_0R_0$, so $\norm{r_h(\theta)}\geq(\sigma_0-\tfrac12L_0R_0)R_0-\norm{r_h(\theta_\star)}=\tfrac{5\sigma_0^2}{18L_0}-\norm{r_h(\theta_\star)}>\norm{r_h(\theta_\star)}$. The minimum of $\norm{r_h}$ over the closed ball is therefore attained in its interior, at a stationary point. For a stationary point $\theta_h$ in the ball, Weyl's inequality~\cite{horn2012matrix} gives $\sigma_h\geq\sigma_0-L_0\norm{\delta}$, and \eqref{eq:inverse_bound} gives $\norm{\delta}(\sigma_0-\tfrac32L_0\norm{\delta})\leq\norm{r_h(\theta_\star)}$. Since $\norm{\delta}\leq R_0$, the factor in parentheses is at least $\sigma_0/2$.
\end{proof}

\section{Details of the numerical experiments}
\label{sec:app_tests}
The stability check of Section~\ref{sec:forward_linear_algebra} is satisfied for every cloud, order, problem, and parameter family.

\subsection{Implicit layer of the treasure chest}
\label{sec:app_cover_refinement}
For the treasure chest the cover of the implicit layer is refined by the second rule of Appendix~\ref{sec:app_cover} once at $h=0.08$ and $h=0.06$ and twice at $h=0.04$. Each refinement refits the cap on smaller supports from the same $41\times41$ seed nodes, which raises the distance error of the cap from $5\times10^{-6}$ without refinement to $9\times10^{-5}$ at $h=0.04$.

\subsection{Velocity fields}
\label{sec:app_velocity}
\begin{table}[htbp]
\centering
\caption{Modes $m_i$ of the transport velocity basis, with $\tilde x=x-c_x$, $\tilde y=y-c_y$, $\tilde z=z-c_z$ for the center $c$ of the bounding box of the boundary nodes of the cloud with $h=0.16$, and $\Theta(s)=s^2|s|$.}
\label{tab:velocity_modes}
\begin{tabular}{llll}
\toprule
Set & $i$ & $d=2$ & $d=3$\\
\midrule
smooth & 1 & $\tfrac12(\tilde x^2+\tilde y^2)$ & $\tfrac12(\tilde x^2+\tilde y^2)$\\
 & 2 & $\sin\pi x\,\sin\pi y$ & $\tilde x\tilde z$\\
 & 3 & $\tilde x\tilde y+0.2\,\tilde x^3$ & $\tilde y\tilde z+0.15\,\tilde x^2$\\
$C^1$ & 1 & $\Theta(\tilde x)$ & $\Theta(\tilde x)$\\
 & 2 & $\Theta(\tilde y)$ & $\Theta(\tilde y)$\\
 & 3 & $\Theta(\tilde y)+0.2\,\tilde x\tilde y$ & $\Theta(\tilde z)+0.2\,\tilde x\tilde y$\\
\bottomrule
\end{tabular}
\end{table}
The reference level set $\phi_0$ of \eqref{eq:velocity_field} is the implicit layer of Section~\ref{sec:node_gen_levelset_math} for the cloud with $h=0.16$. Its balls cover all of $\Omega$, so $\phi_0=f_{\mathrm{loc}}$ is smooth in $\Omega$. With $c$ the center of the bounding box of the boundary nodes of this cloud, the quadratic of the advection--diffusion velocity is
\begin{equation*}
 m_0(x)=\begin{cases}\tfrac12\bigl((x-c_x)^2+(y-c_y)^2\bigr), & d=2,\\ \tfrac12\bigl((x-c_x)^2+(z-c_z)^2\bigr), & d=3,\end{cases}
\end{equation*}
and for $d=3$ the field is $\tilde w=\nabla\times(\Psi e_y)=(-\partial_z\Psi,0,\partial_x\Psi)$, which lies in the planes $y=\text{const}$. The field vanishes on the zero set of $\phi_0$ because $\phi_{0,+}^2$ and its gradient vanish there. The normalization set $X_0$ consists of the points of a uniform grid inside the exact domain, with spacing $0.005$ for $d=2$ and $0.015$ for $d=3$, so the velocity is the same on every cloud.

The velocity basis \eqref{eq:velocity_basis} has $n_\theta=3$ fields $w_i=\tilde w_i/\max_{x\in X_0}\norm{\tilde w_i(x)}$, with $\tilde w_i=(\partial_y,-\partial_x)(\phi_{0,+}^2m_i)$ for $d=2$ and $\tilde w_i=\nabla\times(\phi_{0,+}^2m_ie_i)$ for $d=3$. With $\tilde x=x-c_x$, $\tilde y=y-c_y$, $\tilde z=z-c_z$ and $\Theta(s)=s^2|s|$, which is $C^2$ but not $C^3$ at $s=0$, we use the two sets of modes in Table~\ref{tab:velocity_modes}. The fields of the second set are $C^1$ but not $C^2$ across the coordinate planes through $c$. The true coefficients are multiples of $(1.45,-0.90,0.60)$ for $d=2$ and $(1.20,-0.80,0.50)$ for $d=3$, scaled so that $\max_{x\in X_0}\norm{v(x;\theta_\star)}=1.18$. The velocity $v(x;\theta)$ does not depend on $t$. The hyperviscosity coefficients use $v_{\max}=1.18$, as in the forward problem.

\subsection{Slopes of the individual families}
\label{sec:app_slopes}
Tables~\ref{tab:forward_slopes} and~\ref{tab:inverse_slopes} list the least-squares slopes of the errors of every parameter family. Figures~\ref{fig:forward_convergence} and~\ref{fig:inverse_convergence} show the largest error over the families.

\begin{table}
\centering
\caption{Least-squares slopes of the forward error against $N^{-1/d}$ over the three finest clouds ($h\leq0.08$). Ranges are over the parameter families (for the obstacle, over the six reaction laws).}
\label{tab:forward_slopes}
\begin{tabular}{llccc}
\toprule
Problem & Geometry & $p=2$ & $p=4$ & $p=6$\\
\midrule
Darcy & H-channel & 1.9--2.2 & 6.3--8.3 & 7.7--9.0\\
Darcy & treasure chest & 1.4 & 5.1--5.5 & 7.2--7.8\\
Reaction--diffusion & H-channel & 1.9--2.3 & 4.8--5.0 & 7.3--7.6\\
Reaction--diffusion & treasure chest & 1.1--1.6 & 4.5--5.0 & 7.1--7.4\\
Advection--diffusion & H-channel & 2.3 & 5.6 & 6.0\\
Advection--diffusion & treasure chest & 2.2 & 4.8 & 7.9\\
Advection--diffusion--reaction & obstacle & 1.9--2.0 & 4.7 & 8.1--8.4\\
\bottomrule
\end{tabular}
\end{table}

\begin{table}
\centering
\caption{Least-squares slopes of the relative parameter error against $N^{-1/d}$ over the three finest clouds ($h\leq0.08$). Ranges are over the parameter families (for transport, over the two velocity bases, and for the obstacle, over the six reaction laws).}
\label{tab:inverse_slopes}
\begin{tabular}{llccc}
\toprule
Problem & Geometry & $p=2$ & $p=4$ & $p=6$\\
\midrule
Darcy & H-channel & 1.3--2.8 & 4.6--6.7 & 5.4--10.2\\
Darcy & treasure chest & 0.9--2.9 & 2.8--5.2 & 6.9--9.3\\
Reaction--diffusion & H-channel & 1.4--2.9 & 4.9--5.7 & 5.8--11.0\\
Reaction--diffusion & treasure chest & 1.0--2.8 & 3.7--5.2 & 7.1--7.4\\
Transport & H-channel & 2.4--3.4 & 2.2--3.3 & 5.2--5.3\\
Transport & treasure chest & 0.4--0.9 & 4.0--4.7 & 6.5--8.7\\
Advection--diffusion--reaction & obstacle & 1.5--2.4 & 4.2--5.3 & 6.2--9.2\\
\bottomrule
\end{tabular}
\end{table}

\subsection{Error estimates}
\label{sec:app_estimates}
Tables~\ref{tab:forward_estimate} and~\ref{tab:inverse_efficiency} list the quantities discussed in Sections~\ref{sec:forward_results} and~\ref{sec:inverse_results}. The effective stability constant \eqref{eq:effective_stability} divides by the truncation error $\tau_A$ of the physical rows, and $\norm{\tau_A}$ differs from $\norm{\tau}$ by less than $1\%$ on every cloud.

For the Darcy problem, the truncation error converges with slopes against $N^{-1/d}$ between $2.1$ and $2.5$ for $p=2$, $4.5$ and $6.1$ for $p=4$, and $7.0$ and $8.3$ for $p=6$. The effective stability constant lies between $3\times10^{-3}$ and $3\times10^{-1}$ in all cases and shows no systematic growth in $h$, except in the treasure chest with $p=2$, where its median over the families grows from $0.049$ at $h=0.08$ to $0.076$ at $h=0.04$. In the treasure chest with $p=4$ it decreases, and with $p=6$ it does not grow. The scaled stencil sums $W_L$ stay below $390$ and vary by at most a factor of $1.3$ over the three finest clouds.

\begin{table}
\centering
\caption{Quantities of the elliptic error estimate \eqref{eq:elliptic_bound} for the Darcy problem over the three finest clouds ($h\leq0.08$). The columns give the least-squares slope of the truncation error $\norm{\tau}_2/\sqrt N$ against $N^{-1/d}$ (smallest and largest over the parameter families), the range of the effective stability constant $\hat S_h$ of \eqref{eq:effective_stability}, and the largest scaled stencil sum $W_L$ of \eqref{eq:stencil_sums}.}
\label{tab:forward_estimate}
\begin{tabular}{llcccc}
\toprule
Geometry & $p$ & Slope of $\norm{\tau}$ & $\min\hat S_h$ & $\max\hat S_h$ & $\max W_L$\\
\midrule
H-channel & 2 & 2.3--2.5 & $1.0\times10^{-1}$ & $2.8\times10^{-1}$ & 42\\
 & 4 & 4.7--5.1 & $2.0\times10^{-2}$ & $2.1\times10^{-1}$ & 48\\
 & 6 & 7.6--8.3 & $1.4\times10^{-2}$ & $1.5\times10^{-1}$ & 85\\
\midrule
Treasure chest & 2 & 2.1--2.3 & $4.5\times10^{-2}$ & $9.4\times10^{-2}$ & 91\\
 & 4 & 4.5--6.1 & $3.1\times10^{-3}$ & $4.1\times10^{-2}$ & 169\\
 & 6 & 7.0--8.3 & $1.2\times10^{-2}$ & $1.1\times10^{-1}$ & 386\\
\bottomrule
\end{tabular}
\end{table}

\begin{table}
\centering
\caption{The efficiency index $\eta$ of the first-order inverse error bound over all inverse solves.}
\label{tab:inverse_efficiency}
\begin{tabular}{llcccc}
\toprule
Problem & Geometry & Solves & $\min\eta$ & median $\eta$ & $\max\eta$\\
\midrule
Darcy & H-channel & 75 & 0.007 & 0.424 & 0.841\\
 & treasure chest & 75 & 0.014 & 0.090 & 0.976\\
Reaction--diffusion & H-channel & 90 & 0.002 & 0.107 & 0.966\\
 & treasure chest & 90 & 0.005 & 0.143 & 0.998\\
Transport & H-channel & 30 & 0.021 & 0.143 & 0.298\\
 & treasure chest & 30 & 0.011 & 0.039 & 0.149\\
Advection--diffusion--reaction & obstacle & 90 & 0.008 & 0.100 & 0.873\\
\bottomrule
\end{tabular}
\end{table}

\subsection{Iterations of the inverse solves}
\label{sec:app_iterations}
From $\theta=0$, Levenberg--Marquardt needs $8$ to $88$ iterations, with medians of $18$ to $28$ for the three problems, except for a few inverse solves on the coarsest H-channel cloud, which stop at the limit of $200$ iterations. From the warm starts, Gauss--Newton needs one to seven iterations. Every recovered $\theta_h$ has a smaller residual than the true parameters, $\norm{r(\theta_h)}<\norm{r(\theta_\star)}$.

\section{Declaration of generative AI and AI-assisted technologies in the manuscript preparation process}

During the preparation of this work, the authors used Claude and Codex to assist drafting, running experiments, correctness checks, integration testing. After using this tool/service, the authors reviewed and edited the content as needed and take full responsibility for the content of the published article.

\bibliographystyle{cas-model2-names}
\bibliography{references}

\end{document}